\renewcommand{\subsection}{\subsubsection}
\newtheorem{theorem}{Theorem}[section]
\newtheorem{lemma}{Lemma}[section]
\newtheorem{proposition}{Proposition}[section]
\newtheorem{remark}{Remark}[section]
\begin{document}
\title{\bf Well-posedness of the linearized problem \\ for MHD contact discontinuities}
%\subtitle{Do you have a subtitle?\\ If so, write it here}
\author{{\bf Alessandro Morando}\\
DICATAM, Sezione di Matematica, Universit\`a di Brescia \\ Via Valotti, 9, 25133 Brescia, Italy\\
E-mail: alessandro.morando@ing.unibs.it
\and
{\bf Yuri Trakhinin}\footnote{Supported in part by the
Landau Network--Centro Volta--Cariplo Foundation and the Ministry of Education and Science of the Russian Federation (contract 14.B37.21.0355).}\\
Sobolev Institute of Mathematics, Koptyug av. 4, 630090 Novosibirsk, Russia\\
E-mail: trakhin@math.nsc.ru
\and
{\bf Paola Trebeschi}\\
DICATAM, Sezione di Matematica, Universit\`a di Brescia \\ Via Valotti, 9, 25133 Brescia, Italy\\
E-mail: paola.trebeschi@ing.unibs.it
}

\date{
%Received: date / Accepted: date
}
% The correct dates will be entered by Springer
%
% Add name of the expert who has communicated your paper
%\communicated{name}
%
\maketitle
\begin{abstract}
We study the free boundary problem for contact discontinuities in ideal compressible magnetohydrodynamics (MHD).  They are characteristic discontinuities with no flow across the discontinuity for which the pressure, the magnetic field and the velocity are continuous whereas the density and the entropy may have a jump. Under the Rayleigh-Taylor sign condition $[\partial p/\partial N]<0$ on the jump of the normal derivative of the pressure satisfied at each point of the unperturbed contact discontinuity, we prove the well-posedness in Sobolev spaces of the linearized problem for 2D planar MHD flows.
\end{abstract}
\section{Introduction}
\label{intro}
We consider the equations of ideal compressible MHD:
\begin{equation}\label{1}
\left\{
\begin{array}{l}
 \partial_t\rho  +{\rm div}\, (\rho {v} )=0,\\[6pt]
 \partial_t(\rho {v} ) +{\rm div}\,(\rho{v}\otimes{v} -{H}\otimes{H} ) +
{\nabla}q=0, \\[6pt]
 \partial_t{H} -{\nabla}\times ({v} {\times}{H})=0,\\[6pt]
 \partial_t\bigl( \rho e +{\textstyle \frac{1}{2}}|{H}|^2\bigr)+
{\rm div}\, \bigl((\rho e +p){v} +{H}{\times}({v}{\times}{H})\bigr)=0,
\end{array}
\right.
\end{equation}
where $\rho$ denotes density, $v\in\mathbb{R}^3$ plasma velocity, $H \in\mathbb{R}^3$ magnetic field, $p=p(\rho,S )$ pressure, $q =p+\frac{1}{2}|{H} |^2$ total pressure, $S$ entropy, $e=E+\frac{1}{2}|{v}|^2$ total energy, and  $E=E(\rho,S )$ internal energy. With a state equation of gas, $\rho=\rho(p ,S)$, and the first principle of thermodynamics, \eqref{1} is a closed system for the unknown $ U =U (t, x )=(p, v,H, S)$.

System (\ref{1}) is supplemented by the divergence constraint
\begin{equation}
{\rm div}\, {H} =0
\label{2}
\end{equation}
on the initial data ${U} (0,{x} )={U}_0({x})$. As is known, taking into account \eqref{2}, we can easily symmetrize system \eqref{1} by rewriting it in the nonconservative form
\begin{equation}
\left\{
\begin{array}{l}
{\displaystyle\frac{1}{\rho c^2}\,\frac{{\rm d} p}{{\rm d}t} +{\rm div}\,{v} =0,\qquad
\rho\, \frac{{\rm d}v}{{\rm d}t}-({H},\nabla ){H}+{\nabla}q  =0 ,}\\[9pt]
{\displaystyle\frac{{\rm d}{H}}{{\rm d}t} - ({H} ,\nabla ){v} +
{H}\,{\rm div}\,{v}=0},\qquad
{\displaystyle\frac{{\rm d} S}{{\rm d} t} =0},
\end{array}\right. \label{3}
\end{equation}
where $c^2 =1/\rho_p$ is the square of the sound speed, $\rho_p=\partial\rho /\partial p$, ${\rm d} /{\rm d} t =\partial_t+({v} ,{\nabla} )$ and by $(\ ,\ )$ we denote the scalar product. Equations (\ref{3}) form the symmetric system
\begin{equation}
A_0(U )\partial_tU+\sum_{j=1}^3A_j(U )\partial_jU=0
\label{3Dsys}
\end{equation}
which is hyperbolic if the matrix  $A_0= {\rm diag} (\rho_p/\rho ,\rho ,\rho ,\rho , 1,1,1,1)$ is positive definite, i.e.,
\begin{equation}
\rho  >0,\quad \rho_p >0. \label{5}
\end{equation}
The symmetric matrices $A_j$ can be easily written down.

Within this paper we assume that the plasma obeys the state equation of a polytropic gas
\begin{equation}
\rho(p,S)= A p^{\frac{1}{\gamma}} e^{-\frac{S}{\gamma}}, \qquad A>0,\quad \gamma>1.
\label{polgas}
\end{equation}
In this case $c^2 =\gamma p/\rho$ and \eqref{5} becomes equivalent to\footnote{In fact, for us it is only important that for a polytropic gas the value $\rho c^2=\gamma p$ is continuous if the pressure is continuous, i.e., our results hold true for state equations for which the value $\rho c^2$ has the same property.}
\begin{equation}\label{Hiper2}
\rho>0, \quad p>0.
\end{equation}
Moreover, we manage to carry out the well-posedness analysis for contact discontinuities (see their definition just below) only for 2D planar MHD flows, i.e., when the space variables $x=(x_1,x_2)\in \mathbb{R}^2$ and the velocity and the magnetic field have only two components: $v=(v_1,v_2)\in \mathbb{R}^2$,
$H=(H_1,H_2)\in \mathbb{R}^2$. In the 2D planar case and for a polytropic gas, the MHD system \eqref{3} reads
\begin{equation}
\label{4}
A_0(U )\partial_tU+A_1(U )\partial_1U+A_2(U )\partial_2U=0
\end{equation}
with $A_0= {\rm diag} (1/(\gamma p) ,\rho ,\rho ,1,1,1)$ and
\[
A_1=\left( \begin{array}{cccccc} \frac{v_1}{\gamma p}& 1 & 0 & 0 & 0 & 0\\[6pt]
1 & \rho v_1 & 0 & 0& H_2& 0 \\
0& 0& \rho v_1 & 0& -H_1& 0 \\
0& 0& 0& v_1 & 0& 0\\
0& H_2& - H_1& 0& v_1 & 0\\
0& 0& 0& 0& 0& v_1
\end{array} \right),\quad
A_2=\left( \begin{array}{cccccc} \frac{v_2}{\gamma p}& 0 & 1& 0 & 0 & 0\\[6pt]
0 & \rho v_2 & 0 & - H_2& 0& 0 \\
1& 0& \rho v_2 & H_1& 0& 0 \\
0& -H_2& H_1& v_2 & 0& 0\\
0& 0& 0& 0& v_2 & 0\\
0& 0& 0& 0& 0& v_2
\end{array} \right).
\]

We consider the MHD equations \eqref{1} for $t\in [0,T]$ in the unbounded space domain $\mathbb{R}^3$ and suppose that $\Gamma (t)=\{ x_1-\varphi (t,x')=0\}$ is a smooth hypersurface in $[0,T]\times\mathbb{R}^3$, where
$x'=(x_2,x_3)$ are tangential coordinates. We assume that $\Gamma (t)$ is a surface of strong discontinuity for the conservation laws (\ref{1}), i.e., we are interested in solutions of (\ref{1}) that are smooth on either side of $\Gamma (t)$. To be weak solutions of (\ref{1}) such piecewise smooth solutions
should satisfy the MHD Rankine-Hugoniot conditions (see, e.g., \cite{LL})
\begin{equation}
\left\{
\begin{array}{l}
[\mathfrak{j}]=0,\quad [H_{\rm N}]=0,\quad \mathfrak{j}\left[v_{\rm N}\right] +
[q]=0,\quad \mathfrak{j}\left[{v}_{\tau}\right]=H_{\rm N}[{H}_{\tau}],\\[6pt]
H_{\rm N}[{v}_{\tau}]=\mathfrak{j}\left[{H}_{\tau}/\rho\right],\quad \mathfrak{j}\left[
e+\frac{1}{2}(|{H} |^2/\rho )\right] + \left[qv_{\rm N} -H_{\rm N}({H} ,{v} )\right] =0
\end{array}
\right.
\label{6}
\end{equation}
at each point of $\Gamma$, where $[g]=g^+|_{\Gamma}-g^-|_{\Gamma}$ denotes the jump of $g$, with $g^{\pm}:=g$ in the domains
\[
\Omega^{\pm}(t)=\{\pm (x_1- \varphi (t,x'))>0\},
\]
and
\[
\mathfrak{j}^{\pm}=\rho (v_{\rm N}^{\pm}-\partial_t\varphi),\quad v_{\rm N}^{\pm}=({v}^{\pm} ,{N}),\quad H_{\rm N}=({H}^{\pm} ,{N}),\quad {N}=(1,-\partial_2\varphi,-\partial_3\varphi ),
\]
\[
{v}^{\pm}_{\tau}=(v^{\pm}_{\tau _1},v^{\pm}_{\tau _2}),\quad {H}^{\pm}_{\tau}= (H^{\pm}_{\tau _1}, H^{\pm}_{\tau _2}),\quad v^{\pm}_{\tau _i}=({v}^{\pm} ,{\tau}_i),
\]
\[
H^{\pm}_{\tau
_i}=({H}^{\pm} ,{\tau}_i),\quad
{\tau}_1=(\partial_2\varphi,1,0),\quad
{\tau}_2=(\partial_3\varphi,0,1),\quad H_{\rm N}|_{\Gamma}:=H_{\rm N}^{\pm}|_{\Gamma};
\]
$\mathfrak{j}:=\mathfrak{j}^{\pm}|_{\Gamma}$ is the mass transfer flux across the discontinuity surface.

From the mathematical point of view, there are two types of strong discontinuities: shock waves and characteristic discontinuities. Following Lax \cite{Lax57}, characteristic discontinuities, which are characteristic free boundaries, are called contact discontinuities. For the Euler equations of gas dynamics contact discontinuities are indeed contact  from the physical point of view, i.e., there is no flow across the discontinuity ($\mathfrak{j}=0$).

 In MHD the situation with characteristic discontinuities is richer than in gas dynamics. Namely, besides shock waves ($\mathfrak{j}\neq 0$, $[\rho ]\neq 0$) there are three types of characteristic discontinuities \cite{BThand,LL}: tangential discontinuities or current-vortex
sheets ($\mathfrak{j} =0$, $H_{\rm N}|_{\Gamma}= 0$), Alfv\'{e}n or rotational discontinuities ($\mathfrak{j}\neq 0$, $[\rho ] = 0$), and contact discontinuities ($\mathfrak{j}=0$, $H_{\rm N}|_{\Gamma} \neq 0$).
Current-vortex sheets and MHD contact discontinuities are contact from the physical point of view, but Alfv\'{e}n discontinuities are not.

Strong discontinuities formally introduced for the MHD equations do not necessarily exist (at least, locally in time) as piecewise smooth solutions for the full range of admissible initial flow parameters. As is known (see \cite{BS,BThand,Maj,Met}), the fulfilment of the Kreiss-Lopatinski condition \cite{Kreiss} for the linearized constant coefficients problem for a planar discontinuity is necessary (but not sufficient in general) for the local-in-time existence of corresponding nonplanar discontinuities. The violation of the Kreiss-Lopatinski condition is equivalent to the ill-posedness of the linearized constant coefficient problem. This is the same as {\it Kelvin-Helmholtz instability}, and the corresponding planar discontinuity is called unstable or violently unstable.

The more restrictive condition is the uniform Kreiss-Lopatinski condition \cite{Kreiss} and it requires the nonexistence of not only unstable but also neutral modes. The corresponding planar discontinuity is called {\it  uniformly stable}. The {uniform stability} condition satisfied at each point of the initial strong discontinuity usually implies its local-in-time existence. At least, this is so for gas dynamical shock waves \cite{BThand,Maj}, and more recently the same was proved for MHD shock waves \cite{MZ,Kwon}. However, {\it neutral (or weak) stability} sometimes also implies local-in-time existence. For example, for the isentropic Euler equations, Coulombel and Secchi \cite{CS2} have proved the local-in-time-existence of neutrally stable vortex sheets (in 2D) and neutrally stable shock waves.

In MHD there are two types of Lax shocks: fast and slow shock waves (see, e.g., \cite{LL}). A complete 2D stability analysis of fast MHD shock waves was carried out in \cite{T} for a polytropic gas equation of state. Taking into account the results of \cite{MZ,Kwon} extending the Kreiss--Majda theory \cite{Kreiss,Maj} to a class of hyperbolic symmetrizable systems with characteristics of variable multiplicities (this class contains the MHD system), uniformly stable fast shock waves found in \cite{T} exist locally in time. Regarding slow shock waves, some results about their stability  can be found in \cite{BThand,Fil}.

Current-vortex sheets can be only neutrally stable and a sufficient condition for the neutral stability of planar compressible current-vortex sheets\footnote{For the well-posedness and stability analysis for {\it incompressible} current-vortex sheets we refer the reader to \cite{CMST,MoTraTre,Ticvs} and references therein.} was found in \cite{T05}. The local-in-time existence of solutions with a surface of a current-vortex sheet of the MHD equations was proved in \cite{T09}, provided that the sufficient stability from \cite{T05} holds at each point of the initial discontinuity.

By numerical testing of the Kreiss-Lopatinski condition the parameter domains of stability and violent instability of planar Alfv\'{e}n discontinuities were found in \cite{IT}.  Alfv\'{e}n discontinuities are violently unstable for a wide range of flow parameters \cite{IT} but, as current-vortex sheets, they can be only neutrally stable. As was shown in \cite{IT}, the violation of uniform stability for stable Alfv\'{e}n discontinuities is the direct consequence of the fact that the symbol associated to the free boundary is {\it not elliptic}. It means that the boundary conditions cannot be resolved for the time-space gradient $\nabla_{t,x}\varphi=(\partial_t\varphi ,\partial_2\varphi ,\partial_3\varphi )$ of the front function.

In fact, for a general class of free boundary problems one can show that the non-ellipticity of the front symbol implies the existence of neutral modes associated with zeroing of this symbol. At the same time, neutral stability in the case of non-elliptic front symbol is, in some sense, weaker than usual neutral stability. As was noted in \cite{Tcpam}, this kind of neutral stability is, in general, not enough for the well-posedness of the linearized problem with variable coefficients (and, of course, of the original nonlinear problem). That is, this problem can be ill-posed even if the constant coefficients problem satisfies the (weak) Kreiss-Lopatinski condition.

The classical example of a free boundary problem with non-elliptic front symbol is the problem for the incompressible Euler equations with the vacuum boundary condition $p|_{\Gamma}=0$ on a free boundary $\Gamma (t)$ moving with the velocity of fluid particles (see \cite{Lind_incomp} and references therein). The front symbol is also not elliptic for the counterpart of this problem for the compressible Euler equations describing the motion of a compressible perfect liquid (with $\rho|_{\Gamma}>0$) in vacuum \cite{Lind,Tcpam}.

One can easily check that the linearization of the vacuum problem (for compressible or incompressible liquid) associated with a planar boundary always satisfies the Kreiss-Lopatinski condition, i.e., Kelvin-Helmholtz instability does not occur. Nevertheless, the local-in-time existence in Sobolev spaces was managed to be proved in \cite{Lind_incomp,Lind,Tcpam} for the nonlinear free boundary problem only under the physical assumption
\begin{equation}
\frac{\partial p}{\partial N}\leq -\epsilon <0\quad \mbox{on}\ \Gamma (0)
\label{vacc}
\end{equation}
on the normal derivative of the pressure on the initial free boundary, where $N$ is the outward normal to $\Gamma$ and $\epsilon$ is a fixed constant. As is known, the violation of \eqref{vacc} is associated with {\it Rayleigh-Taylor instability} occurring on the level of variable coefficients of the linearized problem. Moreover, for the case of incompressible liquid Ebin \cite{Ebin}
showed the ill-posedness in Sobolev spaces of the nonlinear problem  when the physical condition \eqref{vacc} is not satisfied.

In this paper, we are interested in MHD contact discontinuities and for them the front symbol is also always not elliptic. For contact discontinuities, in view of the requirements $\mathfrak{j}=0$ and $H_{\rm N}|_{\Gamma} \neq 0$, the Rankine-Hugoniot conditions \eqref{6} give the boundary conditions
\begin{equation}
[p]=0,\quad [v]=0,\quad [H]=0,\quad \partial_t\varphi =v^+_N\quad \mbox{on}\ \Gamma (t)
\label{bcond}
\end{equation}
which indeed cannot be resolved for $\nabla_{t,x}\varphi$.
At the same time, the density and the entropy may undergo any jump: $[\rho]\neq 0$, $[S]\neq 0$.

\begin{remark}
{\rm
Observe that the continuity of the magnetic field in \eqref{bcond} is equivalent to
\begin{equation}
[H_N]=0,\quad [H_{\tau}]=0\quad \mbox{on}\ \Gamma (t).
\label{bcondH}
\end{equation}
One can show that the first condition in \eqref{bcondH} coming from the constraint equation \eqref{2} is not a real boundary condition and must be regarded as a restriction (boundary constraint) on the initial data (see Proposition \ref{p1} for more details).
}
\label{r1}
\end{remark}

Our final goal is to find conditions on the initial data
\begin{equation}
{U}^{\pm} (0,{x})={U}_0^{\pm}({x}),\quad {x}\in \Omega^{\pm} (0),\quad \varphi (0,{x}')=\varphi _0({x}'),\quad {x}'\in\mathbb{R}^2,\label{indat}
\end{equation}
providing the existence and uniqueness in Sobolev spaces on some time interval $[0,T]$ of a solution $(U^{\pm},\varphi )$ to the free boundary problem \eqref{1}, \eqref{bcond}, \eqref{indat}, where $U^{\pm}:=U$ in $\Omega^{\pm}(t)$, and $U^{\pm}$ is smooth in $\Omega^{\pm}(t)$. Because of the general properties of hyperbolic conservation laws it is natural to expect only the local-in-time existence of solutions with a surface of contact discontinuity. Therefore, from the mathematical point of view, the question on its nonlinear Lyapunov's stability has no sense.

At the same time, the study of the linearized stability of contact discontinuities is not only a necessary step towards the proof of their local-in-time existence but is also of independent interest in connection with various astrophysical applications. As is noted in \cite{Goed}, the boundary conditions \eqref{bcond} are most typical for astrophysical plasmas. Contact discontinuities are usually observed in the solar wind, behind astrophysical shock waves bounding supernova remnants or due to the interaction of multiple shock waves driven by fast coronal mass ejections.

The absence of Kelvin-Helmholtz instability for contact discontinuities follows from a conserved energy integral \cite{BThand} which can be trivially obtained for the constant coefficients problem resulting from the linearization of problem \eqref{1}, \eqref{bcond}, \eqref{indat} about its piecewise constant solution associated with a planar discontinuity. That is, planar MHD contact discontinuities are always neutrally stable (see Section \ref{sec:3} for more details).\footnote{The surprising thing is that neutral stability trivially shown by the energy method cannot be proved by the usual spectral analysis due to the impossibility to solve analytically (in the general case) the dispersion relation for the magnetoacoustics system (see Section \ref{sec:3}).} But, the non-ellipticity of the front symbol causes principal difficulties if we want to extend the a priori $L^2$ estimate \cite{BThand} easily deduced for the constant coefficients problem to the linearized problem  with variable coefficients.

In this paper, under a suitable stability condition for the unperturbed flow (see just below) we manage to prove the well-posedness in Sobolev spaces of the linearized variable coefficients problem for contact discontinuities in 2D planar MHD (see \eqref{4}). It is amazing that the classical {\it Rayleigh-Taylor sign condition}
\begin{equation}
\left[\frac{\partial p}{\partial N}\right]\leq -\epsilon <0
\label{RT}
\end{equation}
on the jump of the normal derivative of the pressure satisfied at each point of the unperturbed nonplanar contact discontinuity naturally appears in our energy method as the condition sufficient for the well-posedness of the linearized problem. It is interesting to note that, unlike the condition $[\partial q/\partial N] <0$ considered in \cite{Tjde} for the plasma-vacuum interface problem, the magnetic field does not enter \eqref{RT}. That is, for MHD contact discontinuities condition \eqref{RT} appears in its classical (purely hydrodynamical) form as a condition for the pressure $p$ but not for the total pressure $q=p+\frac{1}{2}|{H} |^2$.

The well-posedness result of our paper is a necessary step to prove the local-in-time existence of MHD contact discontinuities provided that the Rayleigh-Taylor sign condition is satisfied at each point of the initial discontinuity. Since in the basic a priori estimate obtained in this paper for the variable coefficients linearized problem we have a {\it loss of derivatives} from the source terms to the solution, we plan to prove the existence of solutions to the original nonlinear problem by a suitable Nash-Moser-type iteration scheme. We suppose that the scheme of the proof will be really similar to that in \cite{CS2,T09,Tcpam}. We do not see any principal difficulties in this direction but postpone nonlinear analysis to a forthcoming paper.

The extension of our result to the general 3D case is still an open difficult problem. It is worth noting that the Rayleigh-Taylor instability of contact discontinuities was earlier detected in numerical MHD simulations of astrophysical plasmas as fingers near the contact discontinuity in the contour maps of density (see \cite{FangZhang} and references therein). Our hypothesis is that Rayleigh-Taylor instability is associated with the violation of the classical condition \eqref{RT}. The proof of this hypothesis is also an interesting open problem for future research.

The rest of the paper is organized as follows. In Section \ref{sec:2}, we reduce the free boundary problem \eqref{1}, \eqref{bcond}, \eqref{indat} to an initial-boundary value problem in a fixed domain and discuss properties of the reduced problem. In Section \ref{sec:3}, we obtain the linearized problem and formulate the main result for it which is Theorem \ref{t1} about its well-posedness in Sobolev spaces under the Rayleigh-Taylor sign condition for the 2D planar unperturbed flow. Moreover, in Section \ref{sec:3} we briefly discuss properties of the  constant coefficients linearized problem for a planar contact discontinuity. In Section \ref{sec:4} we derive the energy a priori estimate for the  variable coefficients linearized problem and in Section \ref{sec:exist} prove the existence of solutions of this problem.

\section{Reduced nonlinear problem in a fixed domain}
\label{sec:2}

The function $\varphi (t,x')$ determining the contact discontinuity surface $\Gamma$ is one of the unknowns of
the free boundary problem \eqref{1}/\eqref{3Dsys}, \eqref{bcond}, \eqref{indat}. To reduce this problem to that in a fixed domain we straighten the interface $\Gamma$ by using the same simplest change of independent variables as in \cite{T09,Tcpam}. That is, the unknowns $U^+$ and $U^-$ being smooth in $\Omega^{\pm}(t)$ are replaced by the vector-functions
\begin{equation}
\widetilde{U}^{\pm}(t,x ):= {U}^{\pm}(t,\Phi^{\pm} (t,x),x')
\label{change}
\end{equation}
which are smooth in the half-space $\mathbb{R}^3_+=\{x_1>0,\ x'\in \mathbb{R}^2\}$,
where
\begin{equation}
\Phi^{\pm}(t,x ):= \pm x_1+\Psi^{\pm}(t,x ),\quad \Psi^{\pm}(t,x ):= \chi (\pm x_1)\varphi (t,x'),
\label{change2}
\end{equation}
and $\chi\in C^{\infty}_0(\mathbb{R})$ equals to 1 on $[-1,1]$, and $\|\chi'\|_{L_{\infty}(\mathbb{R})}<1/2$. Here, as in \cite{Met}, we use the cut-off function $\chi$ to avoid assumptions about compact support of the initial data in our (future) nonlinear existence theorem. Alternatively, we could use the same change of variables as in \cite{CMST,MoTraTre-vacuum,SecTra,ST} inspired by Lannes \cite{Lannes} (see Remark \ref{r3} below).

\begin{remark}
{\rm
The above change of variable is admissible if $\partial_1\Phi^{\pm}\neq 0$. The latter is guaranteed, namely, the inequalities $\partial_1\Phi^+> 0$ and $\partial_1\Phi^-< 0$ are fulfilled, if we consider solutions for which $\|\varphi\|_{L_{\infty}([0,T]\times\mathbb{R}^2)}\leq 1$. This holds if,
without loss of generality, we consider the initial data satisfying $\|\varphi_0\|_{L_{\infty}(\mathbb{R}^2)}\leq 1/2$, and the time $T$ in our existence theorem is sufficiently small.}
\label{r2}
\end{remark}

\begin{remark}
{\rm
In the change of variables used in \cite{CMST,MoTraTre-vacuum,SecTra,ST} the function $\Psi^{\pm}$ in \eqref{change2} is defined in a different way. An important point of this change of variables is the regularization of one half of derivative of the lifting function $\Psi^{\pm}$ with respect to $\varphi$. This property was crucial for deriving a priori estimates for the nonlinear problem for incompressible current-vortex sheets in \cite{CMST}. At the same time, this change of variables gave no advantages for the linear analysis for the plasma-vacuum interface problem in \cite{MoTraTre-vacuum,SecTra}, and it just gave a gain of one half of derivative in the subsequent nonlinear existence theorem  in \cite{ST} proved by Nash-Moser iterations. This gain of one half of derivative in  \cite{ST} is only a nonprincipal technical improvement, and we expect the same for MHD contact discontinuities. At least, for the linearized problem for contact discontinuities, it does not matter whether we define the function $\Psi^{\pm}$ as in \eqref{change2} or as in \cite{CMST,MoTraTre-vacuum,SecTra,ST}. Therefore, choosing now the change of variables \eqref{change}, \eqref{change2}, we postpone the final choice to the forthcoming nonlinear analysis.}
\label{r3}
\end{remark}

Dropping for convenience tildes in $\widetilde{U}^{\pm}$, we reduce \eqref{3Dsys}, \eqref{bcond}, \eqref{indat} to the initial-boundary value problem
\begin{equation}
\mathbb{L}(U^+,\Psi^+)=0,\quad \mathbb{L}(U^-,\Psi^-)=0\quad\mbox{in}\ [0,T]\times \mathbb{R}^3_+,\label{11}
\end{equation}
\begin{equation}
\mathbb{B}(U^+,U^-,\varphi )=0\quad\mbox{on}\ [0,T]\times\{x_1=0\}\times\mathbb{R}^{2},\label{12}
\end{equation}
\begin{equation}
U^+|_{t=0}=U^+_0,\quad U^-|_{t=0}=U^-_0\quad\mbox{in}\ \mathbb{R}^3_+,
\qquad \varphi |_{t=0}=\varphi_0\quad \mbox{in}\ \mathbb{R}^{2},\label{13}
\end{equation}
where $\mathbb{L}(U,\Psi)=L(U,\Psi)U$,
\[
L(U,\Psi)=A_0(U)\partial_t +\widetilde{A}_1(U,\Psi)\partial_1+A_2(U )\partial_2+A_3(U )\partial_3,
\]
\[
\widetilde{A}_1(U^{\pm},\Psi^{\pm})=\frac{1}{\partial_1\Phi^{\pm}}\Bigl(
A_1(U^{\pm})-A_0(U^{\pm})\partial_t\Psi^{\pm}-\sum_{k=2}^3A_k(U^{\pm})\partial_k\Psi^{\pm}\Bigr)
\]
($\partial_1\Phi^{\pm}=\pm 1 +\partial_1\Psi^{\pm}$), and (\ref{12}) is the compact form of the boundary conditions
\begin{equation}
[p]=0,\quad [v]=0,\quad [H_{\tau}]=0,\quad \partial_t\varphi-v_{N}^+|_{x_1=0}=0,
\label{12'}
\end{equation}
with $[g]:=g^+|_{x_1=0}-g^-|_{x_1=0}$ for any pair of values $g^+$ and $g^-$. Moreover, recall that, according to the definition of contact discontinuity, $H^{\pm}_N|_{x_1=0}\neq 0$. Here
\[
v_{N}^{\pm}=v_1^{\pm}- v_2^{\pm}\partial_2\Psi^{\pm}- v_3^{\pm}\partial_3\Psi^{\pm},\quad H_{N}^{\pm}=H_1^{\pm}-H_2^{\pm}\partial_2\Psi^{\pm}-H_3^{\pm}\partial_3\Psi^{\pm},
\]
\[
{H}^{\pm}_{\tau}= (H^{\pm}_{\tau _1}, H^{\pm}_{\tau _2}),\quad H^{\pm}_{\tau
_i}=H_1^\pm \partial_{i+1}\Psi^{\pm}+H_{i+1}^{\pm}, \quad i=1,2.
\]

There appear two natural questions. The first one: Why have the boundary condition $[H_N]=0$ not been included in \eqref{12} (see Remark \ref{r1})? And the second question: Why are systems (\ref{1}) and
\eqref{3Dsys} equivalent on solutions with a surface of contact discontinuity, i.e., why is system (\ref{1}) in the straightened variables equivalent to (\ref{11})? The answer to these questions is given by the following proposition.

\begin{proposition}
Let the initial data \eqref{13} satisfy
\begin{equation}
{\rm div}\, h^+=0,\quad {\rm div}\, h^-=0
\label{14}	
\end{equation}
and the boundary condition
\begin{equation}
[H_{N}]=0,
\label{15}
\end{equation}
where $h^{\pm}=(H_{N}^{\pm},H_2^{\pm}\partial_1\Phi^{\pm},H_3^{\pm}\partial_1\Phi^{\pm})$. If problem \eqref{11}--\eqref{13} has a sufficiently smooth solution, then this solution satisfies \eqref{14} and \eqref{15} for all $t\in [0,T]$. The same is true for solutions  with a surface of contact discontinuity of system \eqref{1}.
\label{p1}
\end{proposition}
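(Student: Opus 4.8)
The plan is to show that the divergence constraint and the normal-magnetic-field continuity propagate from the initial data under the flow of the symmetrized MHD system, exploiting the special structure of the induction equation. First I would work in the straightened variables and write out the induction equations for $H^\pm$ componentwise in the half-space $\mathbb{R}^3_+$. Introducing the transformed magnetic field $h^\pm=(H_N^\pm, H_2^\pm\partial_1\Phi^\pm, H_3^\pm\partial_1\Phi^\pm)$ is the key device: a direct but careful computation (the same one used in \cite{T09,Tcpam}) shows that the equations for $H^\pm$ together with the eikonal-type relation satisfied by $\Psi^\pm$ imply that $h^\pm$ solves a transport-type system of the schematic form
\[
\partial_t h^\pm + \frac{1}{\partial_1\Phi^\pm}\,(\widetilde w^\pm,\nabla)h^\pm + (\text{lower order, linear in }h^\pm)\,h^\pm = 0,
\]
where $\widetilde w^\pm$ is the transformed velocity field; crucially no derivatives of $h^\pm$ other than those appearing in this transport operator are present, and the coefficient of the transport term is the same vector field that governs $\mathrm{d}S/\mathrm{d}t=0$ in the straightened coordinates.

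Next I would take the divergence of this system. Because of the conservative structure of the induction equation (which is what makes $\mathrm{div}\,H$ propagate in the unbroken MHD system), the quantity $\mathrm{div}\,h^\pm$ satisfies a \emph{homogeneous} linear transport equation: $\partial_t(\mathrm{div}\,h^\pm) + \mathrm{div}(\text{flux})=0$ with no zeroth-order source, so that $\mathrm{div}\,h^\pm\equiv 0$ for all $t\in[0,T]$ whenever it vanishes at $t=0$, by uniqueness for the linear transport equation (here one uses that the solution $(U^\pm,\varphi)$ is assumed sufficiently smooth). This disposes of \eqref{14}. For \eqref{15}, I would restrict the transport equation for $h_1^\pm=H_N^\pm$ to the boundary $\{x_1=0\}$: using the boundary condition $\partial_t\varphi = v_N^+|_{x_1=0}$ from \eqref{12'}, one checks that on $\{x_1=0\}$ the transport vector field is tangential to the boundary (its normal component vanishes precisely because of the eikonal boundary relation), so the restriction $H_N^\pm|_{x_1=0}$ satisfies a closed transport equation along the boundary. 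Subtracting the equations for $H_N^+|_{x_1=0}$ and $H_N^-|_{x_1=0}$, and using that $[v]=0$ on the boundary so the two transport fields coincide there, the jump $[H_N]$ obeys a homogeneous transport equation on $\{x_1=0\}\times\mathbb{R}^2$; hence $[H_N]=0$ for all $t$ if it holds at $t=0$, which is \eqref{15}.

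Finally, the statement for genuine solutions with a contact discontinuity of \eqref{1} follows by undoing the change of variables \eqref{change}, \eqref{change2}: the transport structure is coordinate-independent, and $\mathrm{div}\,H^\pm=0$ in $\Omega^\pm(t)$ is equivalent to $\mathrm{div}\,h^\pm=0$ in $\mathbb{R}^3_+$ under \eqref{change2}, while $[H_N]=0$ on $\Gamma(t)$ corresponds to $[H_N]=0$ on $\{x_1=0\}$. I expect the main obstacle to be the bookkeeping in the first step: verifying that the componentwise induction equations, after the nonlinear change of variables that mixes $\partial_t\Psi$, $\partial_k\Psi$ into $\widetilde A_1$, really do collapse into a clean transport system for $h^\pm$ with no spurious source or higher-order terms — this requires using both the equation $\mathrm{d}S/\mathrm{d}t=0$-type structure and the identity $\partial_t\Phi^\pm = $ (the corresponding transported quantity), and it is where an error would most easily creep in. Once that identity is in hand, the propagation arguments are routine uniqueness statements for linear transport equations.
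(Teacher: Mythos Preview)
Your proposal is correct and follows essentially the same approach as the paper: for \eqref{14} the paper simply cites \cite{T09} for the transport equation satisfied by $\mathrm{div}\,h^\pm$ (exactly the mechanism you outline), and for \eqref{15} it restricts the equations for $H^\pm$ to $\{x_1=0\}$, uses $\partial_t\varphi=v_N^+|_{x_1=0}$ to see the transport is tangential, obtains $\partial_t H_N^\pm + v_2^\pm\partial_2 H_N^\pm + v_3^\pm\partial_3 H_N^\pm + (\partial_2 v_2^\pm+\partial_3 v_3^\pm)H_N^\pm=0$, and subtracts using $[v]=0$ to get a homogeneous transport equation for $[H_N]$---precisely your plan.
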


\begin{proof}
The proof that equations \eqref{14} are satisfied for all $t\in [0,T]$ if they are true at $t=0$ is absolutely the same as in \cite{T09}. Regarding the boundary condition \eqref{15}, again, exactly as in Appendix A in \cite{T09}, we consider the equations for $H^{\pm}$ contained in \eqref{11} on the boundary $x_1=0$, we use  the last condition in \eqref{12'} and its counterpart for $v^-$  to obtain
\[
\partial_t H_{N}^{\pm} +v_2^{\pm}\partial_2H_{N}^{\pm}+v_3^{\pm}\partial_3H_{N}^{\pm}+
\left(\partial_2v_2^{\pm}+\partial_3v_3^{\pm}\right) H_{N}^{\pm}=0\quad \mbox{on}\ x_1=0.
\]
In view of $[v]=0$, the last equations imply
\[
\partial_t [H_{N}] +v_2^+\partial_2[H_{N}]+v_3^+\partial_3[H_{N}]+ \left(\partial_2v_2^++\partial_3v_3^+\right) [H_{N}]=0\quad \mbox{on}\ x_1=0.
\]
Using the standard method of characteristic curves, we conclude that (\ref{15}) is fulfilled for all $t\in [0,T]$ if it is satisfied for $t=0$.
\end{proof}

Equations (\ref{14}) are just the divergence constraint (\ref{2}) on either side of the straightened front. Using (\ref{14}), we can prove that system (\ref{1}) in the straightened variables is equivalent to (\ref{11}).

Concerning the boundary condition \eqref{15}, we must regard it as the restriction on the initial data (\ref{13}). Otherwise, the hyperbolic problem (\ref{11}), (\ref{12}), (\ref{15}) does not have a correct number of boundary conditions.
%and, in particular, its linearization does not have the property of {\it maximality} \cite{Rauch}.
Indeed, the boundary matrix reads
\[
A_{\nu}={\rm diag}\, \bigl(\widetilde{A}_1({\bf U}^+,\Psi^+),\widetilde{A}_1({\bf U}^-,\Psi^-)\bigr),
\]
where
\[
\widetilde{A}_1(U^{\pm},\Psi^{\pm}) = \frac{1}{\partial_1 \Phi^{\pm}}\begin{pmatrix} \frac{w_1^{\pm}}{\rho^{\pm}(c^{\pm})^2} & N^{\pm} & 0& 0\\[6pt]
(N^{\pm})^T & \rho w_1^{\pm} I_3 & N^{\pm} \otimes H^{\pm} - h_1^{\pm} I_3 & 0^T\\[6pt]
0^T & H^{\pm} \otimes N^{\pm} - h_1^{\pm} I_3 & w_1^{\pm} I_3 & 0^T\\[3pt]
0& 0& 0 & w_1^{\pm}
\end{pmatrix},
\]
\[
w^{\pm}=u^{\pm}-(\partial_t\Psi^{\pm},0,0),\quad u^{\pm}=(v_N^{\pm},v_2^{\pm}\partial_1\Phi^{\pm},v_3^{\pm}\partial_1\Phi^{\pm}),
\]
$I_3$ is the unit matrix of order 3, $N^{\pm}=(1, -\partial_2\Psi^{\pm}, -\partial_3\Psi^{\pm})$, and $w_1^{\pm}$ and $h_1^{\pm}$ are the first components of the vectors $w^{\pm}$ and $h^{\pm}$ respectively, and $\otimes$ denotes the tensor product. In view of the last condition in \eqref{12'} and its counterpart for $v^-$, we have $w_1^{\pm}|_{x_1=0}=0$ and
\begin{equation}
\widetilde{A}_1(U^{\pm},\Psi^{\pm})|_{x_1=0} = \pm \begin{pmatrix} 0 & N & 0& 0\\[6pt]
N^T & O_3 & N \otimes H^{\pm} - h_1^{\pm} I_3 & 0^T\\[6pt]
0^T & H^{\pm} \otimes N - h_1^{\pm} I_3 & O_3 & 0^T\\[3pt]
0& 0& 0 & 0
\end{pmatrix}_{|x_1=0},
\label{A1tilde}
\end{equation}
where $O_3$ is the zero matrix of order 3. For the matrix
\[
\widehat{\mathcal{A}}^{\pm}=\widetilde{A}_1(\widehat{U}^{\pm},\hat{\Psi}^{\pm})|_{x_1=0}
\]
calculated on a certain background $(\widehat{U}^{\pm},\hat{\varphi} )$ we have
\begin{equation}
(\widehat{\mathcal{A}}^{\pm}U^{\pm},U^{\pm})=((J^{\pm})^T\widehat{\mathcal{A}}^{\pm}J^{\pm}W^{\pm},W^{\pm})=(\widehat{\mathcal{B}}^{\pm}W^{\pm},W^{\pm}),
\label{boundmatr}
\end{equation}
where $\widehat{\mathcal{B}}^{\pm}=\mathcal{B}^{\pm}(\widehat{U}^{\pm}_{|x_1=0},\hat{\varphi} )$,
\[
W^{\pm}= (\check{q}^{\pm},\check{v}^{\pm}_N,v_2^{\pm},v_3^{\pm},\check{H}^{\pm}_N,H^{\pm}_2,H^{\pm}_3,S^{\pm}),\quad \check{q}^{\pm}=p^{\pm}+(\widehat{H}^{\pm},H^{\pm}),
\]
\[
 \check{v}^{\pm}_N=v_1^{\pm}-v_2^{\pm}\partial_2\hat{\varphi}-v_3^{\pm}\partial_3\hat{\varphi},\quad
\check{H}^{\pm}_N=H_1^{\pm}-H_2^{\pm}\partial_2\hat{\varphi}-H_3^{\pm}\partial_3\hat{\varphi},
\]
and $U^{\pm}=J^{\pm}W^{\pm}$, with $\det J^{\pm}\neq 0$.

After calculations analogous to those in \cite{SecTra} we find
\begin{equation}
\mathcal{B}^{\pm}=\pm\begin{pmatrix}
0 & e_1 & 0 & 0 \\[3pt]
e_1^T &O_3& -h_1^{\pm}a_0 & 0^T\\[3pt]
0^T & -h_1^{\pm}a_0 & O_3& 0^T\\[3pt]
0 & 0 &0 & 0
\end{pmatrix}_{|x_1=0},
\label{boundmatr'}
\end{equation}
where $e_1= (1,0,0)$ and $a_0=a_0(\varphi )$ is the symmetric positive definite matrix\footnote{The matrix $a_0$ is the matrix $\hat{a}_0$ from \cite{SecTra} calculated on the boundary $x_1=0$.}
\[
a_0=\begin{pmatrix}
1 & \partial_2\varphi &  \partial_3\varphi \\[3pt]
\partial_2\varphi & 1+(\partial_2\varphi)^2 &\partial_2\varphi\partial_3\varphi\\[3pt]
\partial_3\varphi &\partial_2\varphi\partial_3\varphi & 1+(\partial_3\varphi)^2
\end{pmatrix}.
\]
Since $h_1^{\pm}|_{x_1=0}\neq 0$, the matrix $\mathcal{B}^{\pm}$ has tree positive, three negative and two zero eigenvalues. Therefore, the boundary matrix $A_{\nu}$ on the boundary $x_1=0$ has six positive, six negative and four zero eigenvalues. That is, the boundary $x_1=0$ is {\it characteristic}, and since one of the boundary conditions is needed for determining the function $\varphi $, the correct number of boundary conditions is seven (that is the case in \eqref{12'}).

\begin{remark}
{\rm
In fact, the signature of the symmetric matrix $A_{\nu}|_{x_1=0}$  associated with a nonplanar front $x_1=\varphi (t,x')$ is determined from the well-known formulae for the eigenvalues of the matrix $A_1$ associated with the planar front $x_1=0$  and calculated through the Alfv\'{e}n and (fast and slow) magnetosonic velocities (see, e.g., \cite{LL}). In this sense, the calculations above were not really necessary. At the same time, for the linearized problem we will need formulae \eqref{boundmatr} and \eqref{boundmatr'} for deriving a priori estimates by the energy method.}
\label{r4}
\end{remark}

In the next section, we linearize problem \eqref{11}--\eqref{13} around a given basic state (``unperturbed flow") and we will have to make reasonable assumptions about it. Since in our future nonlinear analysis by Nash-Moser iterations the basic state playing the role of an intermediate state $(U^{\pm}_{n+1/2},\varphi_{n+1/2})$ (see \cite{CS2,ST,T09,Tcpam}) should finally converge to a solution of the nonlinear problem, we need to know a certain a priori information about solutions of problem \eqref{11}--\eqref{13}. This information is contained in the following proposition.

\begin{proposition}
Assume that problem \eqref{11}--\eqref{13} (with the initial data satisfying \eqref{14} and \eqref{15}) has a sufficiently smooth solution $(U^{\pm},\varphi)$ on a time interval $[0,T]$.  Assume also that the plasma obeys the state equation \eqref{polgas} of a
polytropic gas. Then the normal derivatives $\partial_1U^{\pm}$ satisfy the jump conditions
\begin{equation}
[\partial_1H_{N}]=0
\label{jc1}
\end{equation}
and
\begin{equation}
[\partial_1v]=0
\label{jc2}
\end{equation}
\label{p2}
for all $t\in [0,T]$, where due to the fact that we have transformed the domains $\Omega^\pm (t)$ into the same half-space $\mathbb{R}^3_+$ (but not into the different half-spaces $\mathbb{R}^3_+$ and $\mathbb{R}^3_-$) the jump of a normal derivative is defined as follows
\begin{equation}
[\partial_1a]:=\partial_1a^+_{|x_1=0}+\partial_1a^-_{|x_1=0}.
\label{norm_jump}
\end{equation}
\end{proposition}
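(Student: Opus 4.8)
The plan is to restrict the interior equations \eqref{11} to the fixed boundary $\{x_1=0\}$, where they simplify drastically, and to read the required jump relations off the interplay between these simplified equations and the boundary conditions. The elementary facts used throughout are the following. Since the initial data satisfy \eqref{15}, Proposition \ref{p1} gives $[H_N]=0$ on $\{x_1=0\}$ for all $t\in[0,T]$; together with \eqref{12'} this forces $[p]=[v]=[H]=0$ on $\{x_1=0\}$ for all $t$ (the conditions $[H_N]=0$, $[H_{\tau}]=0$ forcing $[H]=0$ exactly as in Remark \ref{r1}), hence also $[\partial_t g]=[\partial_2 g]=[\partial_3 g]=0$ there for $g\in\{p,v,H\}$. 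Moreover $w_1^{\pm}|_{x_1=0}=0$ by the last condition in \eqref{12'} and its counterpart for $v^-$, and $\partial_1\Phi^{\pm}\equiv\pm1$ in a neighbourhood of $\{x_1=0\}$ since $\chi\equiv1$ on $[-1,1]$. Consequently, on $\{x_1=0\}$ the operator ${\rm d}/{\rm d} t$ reduces to the tangential operator $\partial_t+v_2^{\pm}\partial_2+v_3^{\pm}\partial_3$, every tangential term of \eqref{11} has vanishing jump, and the only surviving contributions to jumps come from the normal operators $\tfrac{1}{\partial_1\Phi^{\pm}}\partial_1=\pm\partial_1$. (Note that the momentum block of \eqref{11} carries no $\partial_1v$ and will not be used.)

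For \eqref{jc1} I would use only the divergence constraint. By Proposition \ref{p1} we have ${\rm div}\,h^{\pm}=0$ for all $t$; since $h^{\pm}=(H_N^{\pm},\pm H_2^{\pm},\pm H_3^{\pm})$ near $\{x_1=0\}$, this reads $\partial_1H_N^{\pm}=\mp(\partial_2H_2^{\pm}+\partial_3H_3^{\pm})$ there, and adding the $+$ and $-$ identities gives, by \eqref{norm_jump} and $[H_2]=[H_3]=0$, that $[\partial_1H_N]=-[\partial_2H_2]-[\partial_3H_3]=0$.

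For \eqref{jc2} I would proceed in two steps. First, the ``$p$''-equation of \eqref{11} restricted to $\{x_1=0\}$ becomes $\tfrac{1}{\rho^{\pm}(c^{\pm})^{2}}\tfrac{{\rm d}p^{\pm}}{{\rm d}t}+\partial_2v_2^{\pm}+\partial_3v_3^{\pm}\pm\partial_1v_N^{\pm}=0$, with $v_N^{\pm}=v_1^{\pm}-v_2^{\pm}\partial_2\varphi-v_3^{\pm}\partial_3\varphi$; here the polytropic law \eqref{polgas} enters essentially, since $\rho^{\pm}(c^{\pm})^{2}=\gamma p^{\pm}$, so that $1/(\rho^{\pm}(c^{\pm})^{2})$ has matching traces on $\{x_1=0\}$ because $[p]=0$. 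Taking the jump then annihilates every term but one and leaves $[\partial_1v_N]=0$. Second, the three ``$H$''-equations of \eqref{11} restricted to $\{x_1=0\}$ read $\tfrac{{\rm d}H^{\pm}}{{\rm d}t}-\bigl(\pm H_N^{\pm}\partial_1+H_2^{\pm}\partial_2+H_3^{\pm}\partial_3\bigr)v^{\pm}+H^{\pm}\bigl(\partial_2v_2^{\pm}+\partial_3v_3^{\pm}\pm\partial_1v_N^{\pm}\bigr)=0$; on taking the jump, the continuity of $p,v,H$ removes all tangential contributions, $[H]=0$ makes the coefficients $H_N^{\pm}|_{x_1=0}$ and $H^{\pm}|_{x_1=0}$ match, and $[\partial_1v_N]=0$ from the first step eliminates the last term, leaving the vector identity $H_N|_{x_1=0}\,[\partial_1v]=0$. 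Since we are dealing with a contact discontinuity, $H_N|_{x_1=0}\neq0$, whence $[\partial_1v]=0$. All these relations are algebraic consequences of the equations at each fixed time, so they hold for all $t\in[0,T]$.

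I do not expect a conceptual obstacle; the points needing care are two. One is the sign in the definition \eqref{norm_jump}: because both $\Omega^{\pm}(t)$ have been straightened onto the same half-space $\mathbb{R}^3_+$ (so $\partial_1\Phi^-\equiv-1$ near the boundary), the outward normal derivative corresponds to $+\partial_1$ on the $+$ side and to $-\partial_1$ on the $-$ side, which is precisely why the ``jump'' of a normal derivative is a sum. The other, and the genuinely indispensable hypothesis, is the continuity of $1/(\rho c^2)$ across $\{x_1=0\}$ — exactly the role of \eqref{polgas}, or of the weaker property of $\rho c^2$ recorded in the footnote to \eqref{Hiper2}: without it the jump of the ``$p$''-equation would retain an uncontrollable term $[\,1/(\rho c^2)\,]\,\partial_t p$ and the deduction of $[\partial_1v_N]=0$, hence of \eqref{jc2}, would collapse.
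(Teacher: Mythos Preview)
Your proof is correct and follows essentially the same route as the paper: for \eqref{jc1} you use the divergence constraint together with $[H]=0$, and for \eqref{jc2} you first extract $[\partial_1 v_N]=0$ from the pressure equation (using that $\rho c^2=\gamma p$ is continuous for a polytropic gas), then feed this into the jump of the $H$--equation to conclude $H_N[\partial_1 v]=0$. Your additional commentary on the sign convention \eqref{norm_jump} and on the precise role of the polytropic hypothesis is accurate and makes explicit exactly the points the paper leaves implicit.
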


\begin{proof}
It follows from \eqref{14} that ${\rm div}\, h^+_{|x_1=0}+{\rm div}\, h^-_{|x_1=0}=0$. Then, since $[H]=0$, we get \eqref{jc1}. For a polytropic gas, restricting equations \eqref{11} from $\mathbb{R}^3_+$ to the boundary $x_1=0$ and using the last condition in \eqref{12'} and its counterpart for $v^-$, we obtain
\[
\frac{1}{\gamma p^\pm }\partial^{\pm}_0p^\pm \pm {\rm div}\,u^\pm=0\quad\mbox{on}\ x_1=0,
\]
where $\partial^{\pm}_0:=\partial_t+v_2^\pm\partial_2 +v_3^\pm\partial_3$.
Passing to the jump and using the continuity of the pressure and the velocity, we get  ${\rm div}\, u^+_{|x_1=0}+{\rm div}\, u^-_{|x_1=0}=0$, and then
\begin{equation}
[\partial_1v_N]=0.
\label{vn}
\end{equation}
Again considering equations \eqref{11} on the boundary $x_1=0$ and passing to the jump, we obtain
\[
[\partial_0 H]-H^+_N[\partial_1 v]  -H^+_2[\partial_2 v]-H^+_3[\partial_3 v]+H^+({\rm div}\, u^+ +{\rm div}\, u^-)=0\quad\mbox{on}\ x_1=0,
\]
i.e.,
\[
H^+_N[\partial_1 v]=H^+({\rm div}\, u^+ +{\rm div}\, u^-) =H^+[\partial_1v_N]\quad\mbox{on}\ x_1=0.
\]
In view  of \eqref{vn} and the condition $H_N^+|_{x_1=0}\neq 0$, this gives \eqref{jc2}.
\end{proof}

\section{Linearized problem and main result}
\label{sec:3}

\subsection{Basic state}

We first describe a basic state upon which we perform linearization. Let
\begin{equation}
(\widehat{U}^+(t,x ),\widehat{U}^-(t,x ),\hat{\varphi}(t,{x}'))
\label{a21}
\end{equation}
be a given sufficiently smooth vector-function with $\widehat{U}^{\pm}=(\hat{p}^{\pm},\hat{v}^{\pm},\widehat{H}^{\pm},\widehat{S}^{\pm})$ and
\begin{equation}
\|\widehat{U}^+\|_{W^2_{\infty}(\Omega_T)}+
\|\widehat{U}^-\|_{W^2_{\infty}(\Omega_T)}
%+\|\partial_1\widehat{U}\|_{W^2_{\infty}(\Omega_T)}
+\|\hat{\varphi}\|_{W^3_{\infty}(\partial\Omega_T)} \leq K,
\label{a22}
\end{equation}
where $K>0$ is a constant and
\[
\Omega_T:= (-\infty, T]\times\mathbb{R}^3_+,\quad \partial\Omega_T:=(-\infty ,T]\times\{x_1=0\}\times\mathbb{R}^{2}.
\]
If the basic state \eqref{a21} upon which we shall linearize problem \eqref{11}--\eqref{13} is a solution of this problem (its existence should be proved), then it is natural to call it unperturbed flow. The trivial example of the unperturbed flow is the constant solution $(\overline{U}^+,\overline{U}^-,0)$ associated with the planar contact discontinuity $x_1=0$, where $\overline{U}^{\pm}\in \mathbb{R}^{8}$  are constant vectors.

Considering from now on the case of a polytropic gas, we assume that the basic state defined in ${\Omega_T}$ satisfies there the hyperbolicity condition \eqref{5},
\begin{equation}
\rho (\hat{p}^{\pm},\widehat{S}^{\pm})\geq \bar{\rho}_0 >0,\quad \hat{p}^{\pm} \geq \bar{p}_0 >0  \label{a5}
\end{equation}
(with some fixed constants $\bar{\rho}_0$ and $\bar{p}_0$), the boundary conditions \eqref{12'},
\begin{equation}
[\hat{p}]=0,\quad [\hat{v}]=0,\quad [\widehat{H}_{\tau}]=0, \quad \partial_t\hat{\varphi}-\hat{v}_{N}^+|_{x_1=0}=0,
\label{a12'}
\end{equation}
the condition
\begin{equation}
|\widehat{H}_N^{\pm}|_{x_1=0}|\geq \bar{\kappa}_0 >0
\label{cdass}
\end{equation}
(with some fixed constant $\bar{\kappa}_0$), the equations for $H^{\pm}$ contained in \eqref{11},
\begin{equation}
\partial_t\widehat{H}^{\pm}+\frac{1}{\partial_1\widehat{\Phi}^{\pm}}\left\{ (\hat{w}^{\pm} ,\nabla )
\widehat{H}^{\pm} - (\hat{h}^{\pm} ,\nabla ) \hat{v}^{\pm} + \widehat{H}^{\pm}{\rm div}\,\hat{u}^{\pm}\right\} =0 ,
\label{b21}	
\end{equation}
the divergence constraints \eqref{14} and the boundary constraint \eqref{15} for $t\leq 0$,
\begin{equation}
{\rm div}\, \hat{h}^{\pm}|_{t\leq 0}=0,\quad [\widehat{H}_N]|_{t\leq 0}=0,
\label{b14}	
\end{equation}
and the jump condition \eqref{vn},
\begin{equation}
[\partial_1\hat{v}_N]=0,
\label{avn}
\end{equation}
where, from now on, the jump of the normal derivatives has to be intended as in \eqref{norm_jump} and
where all of the ``hat'' values are determined like corresponding values for $(U^\pm, \varphi)$, e.g.,
\[
\widehat{\Phi}^{\pm}(t,x )=\pm x_1 +\widehat{\Psi}^{\pm}(t,x ),\quad
\widehat{\Psi}^{\pm}(t,x )=\chi(\pm x_1)\hat{\varphi}(t,x'),
\]
\[
\hat{v}_{N}^{\pm}=\hat{v}_1^{\pm}- \hat{v}_2^{\pm}\partial_2\widehat{\Psi}^{\pm}- \hat{v}_3^{\pm}\partial_3\widehat{\Psi}^{\pm},\quad
\widehat{H}_{N}^{\pm}=\widehat{H}_1^{\pm}- \widehat{H}_2^{\pm}\partial_2\widehat{\Psi}^{\pm}- \widehat{H}_3^{\pm}\partial_3\widehat{\Psi}^{\pm},
\]
\[
\widehat{H}^{\pm}_{\tau}= (\widehat H^{\pm}_{\tau _1}, \widehat H^{\pm}_{\tau _2}),\quad \widehat H^{\pm}_{\tau
_i}=\widehat H_1^\pm \partial_{i+1}\Psi^{\pm}+\widehat H^{\pm}_{i+1}, \quad i=1,2.
\]
Moreover, without loss of generality we assume that $\|\hat{\varphi}\|_{L_{\infty}(\partial\Omega_T)}<1$ (see Remark \ref{r2}). This implies
\[
\partial_1\widehat{\Phi}^+\geq 1/2,\quad \partial_1\widehat{\Phi}^-\leq -  1/2.
\]
Note that \eqref{a22} yields
\[
\|\widehat{W}\|_{W^2_{\infty}(\Omega_T)} \leq C(K),
\]
where $\widehat{W}:=(\widehat{U}^+,\widehat{U}^-,\nabla_{t,x}\widehat{\Psi}^+,
\nabla_{t,x}\widehat{\Psi}^-)$, $\nabla_{t,x}=(\partial_t, \nabla )$, and $C=C(K)>0$ is a constant depending on $K$.

As in Proposition \ref{p1}, equations \eqref{b21} and constraints \eqref{b14} imply
\begin{equation}
{\rm div}\, \hat{h}^+=0, \quad {\rm div}\, \hat{h}^-=0
\label{b14'}	
\end{equation}
and
\begin{equation}
[\widehat{H}_N]=0
\label{b15}	
\end{equation}
for all $t\in (-\infty ,T]$. Then, as in the proof of Proposition \ref{p2}, using \eqref{a12'}, \eqref{avn} and \eqref{b15}, from \eqref{b21} and \eqref{b14'} we deduce for the basic state the jump conditions \eqref{jc1} and \eqref{jc2},
\begin{equation}
[\partial_1\widehat{H}_{N}]=0,\quad [\partial_1\hat{v}]=0.
\label{jc1'}
\end{equation}

\begin{remark}
{\rm
For the linearized problem we will need equations associated to the divergence constraints \eqref{14}. However, exactly as in \cite{T09}, to deduce them it is not enough that these constraints are satisfied by the basic state (\ref{a21}) (see \eqref{b14'}) and  we need actually that the equations for $H^\pm$ themselves contained in \eqref{11}  are fulfilled for (\ref{a21}), i.e., assumption \eqref{b21} holds.}
\label{r5}
\end{remark}

\begin{remark}
{\rm
Assumptions \eqref{a5}--\eqref{avn} are nonlinear constraints on the basic state which are automatically satisfied if the basic state is an exact solution of problem \eqref{11}--\eqref{13} (unperturbed flow). We will really need them while deriving a priori estimates for the linearized problem. In the forthcoming nonlinear analysis we plan to use the Nash-Moser method. As in \cite{CS2,T09,Tcpam}, the Nash-Moser procedure will be not completely standard. Namely, at each $n$th Nash-Moser iteration step we will have to construct an intermediate state $(U^{\pm}_{n+1/2},\varphi_{n+1/2})$ satisfying constraints  \eqref{a5}--\eqref{avn}. Without assumption \eqref{avn} such an intermediate state can be constructed in exactly the same manner as in \cite{T09}. Assumption \eqref{avn} does not however cause additional difficulties and we postpone corresponding arguments to the nonlinear analysis.}
\label{r6}
\end{remark}

\subsection{Linearized problem}

The linearized equations for (\ref{11}), (\ref{12}) read:
\[
\mathbb{L}'(\widehat{U}^{\pm},\widehat{\Psi}^{\pm})(\delta U^{\pm},\delta\Psi^{\pm}):=
\frac{d}{d\varepsilon}\mathbb{L}(U_{\varepsilon}^{\pm},\Psi_{\varepsilon}^{\pm})|_{\varepsilon =0}={f}^{\pm}
\quad \mbox{in}\ \Omega_T,
\]
\[
\mathbb{B}'(\widehat{U}^+,\widehat{U}^-,\hat{\varphi})(\delta U^+,\delta U^-,\delta \varphi):=
\frac{d}{d\varepsilon}\mathbb{B}(U_{\varepsilon}^+,U_{\varepsilon}^-,\varphi_{\varepsilon})|_{\varepsilon =0}={g}
\quad \mbox{on}\ \partial\Omega_T
\]
where $U_{\varepsilon}^{\pm}=\widehat{U}^{\pm}+ \varepsilon\,\delta U^{\pm}$,
$\varphi_{\varepsilon}=\hat{\varphi}+ \varepsilon\,\delta \varphi$, and
\[
\Psi_{\varepsilon}^{\pm}(t,{x} ):=\chi (\pm x_1)\varphi _{\varepsilon}(t,{x}'),\quad
\Phi_{\varepsilon}^{\pm}(t,{x} ):=\pm x_1+\Psi_{\varepsilon}^{\pm}(t,{x} ),
\]
\[
\delta\Psi^{\pm}(t,{x} ):=\chi (\pm x_1)\delta \varphi (t,{x} ).
\]
Here, as usual, we introduce the source terms ${f}^{\pm}(t,{x} )=(f_1^{\pm}(t,{x} ),\ldots ,f_8^{\pm}(t,{x} ))$ and ${g}(t,{x}' )=(g_1(t,{x}' ),\ldots ,g_7(t,{x}' ))$ to make the interior equations and the boundary conditions inhomogeneous.

We easily compute the exact form of the linearized equations (below we drop $\delta$):
\[
\mathbb{L}'(\widehat{{U}}^{\pm},\widehat{\Psi}^{\pm})({U}^{\pm},\Psi^{\pm})\\
%\]
%\[
=
L(\widehat{{U}}^{\pm},\widehat{\Psi}^{\pm}){U}^{\pm} +{\cal C}(\widehat{{U}}^{\pm},\widehat{\Psi}^{\pm})
{U}^{\pm} -  \bigl\{L(\widehat{{U}}^{\pm},\widehat{\Psi}^{\pm})\Psi^{\pm}\bigr\}\frac{\partial_1\widehat{U}{\pm}}{\partial_1\widehat{\Phi}^{\pm}},
\]
\[
\mathbb{B}'(\widehat{{U}}^+,\widehat{{U}}^-,\hat{\varphi})({U}^+,{U}^-,f)=
\left(
\begin{array}{c}
p^+-p^-\\[3pt]
v^+-v^-\\[3pt]
H_{\tau}^+-H_{\tau}^-\\[3pt]
\partial_t\varphi +\hat{v}_2^+\partial_2\varphi +\hat{v}_3^+\partial_3\varphi -v_{N}^+
\end{array}
\right),
\]
where
\[
v_{N}^{\pm}=
v_1^{\pm}-v_2^{\pm}\partial_2\widehat{\Psi}^\pm -v_3^{\pm}\partial_3\widehat{\Psi}^\pm , \quad
{H}^{\pm}_{\tau}= (H^{\pm}_{\tau _1}, H^{\pm}_{\tau _2}),
\]
\[
 H^{\pm}_{\tau
_i}=H_1^\pm \partial_{i+1}\widehat{\Psi}^{\pm}+H^{\pm}_{i+1}, \quad i=1,2,
\]
and the matrix
${\cal C}(\widehat{{U}}^{\pm},\widehat{\Psi}^{\pm})$ is determined as follows:
\begin{multline*}
{\cal C}(\widehat{{U}}^{\pm},\widehat{\Psi}^{\pm}){Y}
= ({Y} ,\nabla_yA_0(\widehat{{U}}^{\pm} ))\partial_t\widehat{{U}}^{\pm}
 +({Y} ,\nabla_y\widetilde{A}_1(\widehat{\bf U}^{\pm},\widehat{\Psi}^{\pm}))\partial_1\widehat{{U}}^{\pm}\\[6pt]
+ ({Y} ,\nabla_yA_2(\widehat{{U}}^{\pm} ))\partial_2\widehat{{U}}^{\pm}
+ ({Y} ,\nabla_yA_3(\widehat{{U}}^{\pm} ))\partial_3\widehat{{U}}^{\pm},
\end{multline*}
\[
({Y} ,\nabla_y A(\widehat{{U}}^{\pm})):=\sum_{i=1}^8y_i\left.\left(\frac{\partial A ({Y} )}{
\partial y_i}\right|_{{Y} =\widehat{{U}}^{\pm}}\right),\quad {Y} =(y_1,\ldots ,y_8).
\]

The differential operator $\mathbb{L}'(\widehat{U}^{\pm},\widehat{\Psi}^{\pm})$ is a first order operator in
$\Psi^{\pm}$. This fact can give some trouble in obtaining a priori estimates for the linearized problem by the energy method. Following
\cite{Al}, we overcome this difficulty by introducing the ``good unknowns'':
\begin{equation}
\dot{U}^+:=U^+ -\frac{\Psi^+}{\partial_1\widehat{\Phi}^+}\,\partial_1\widehat{U}^+,\quad
\dot{U}^-:=U^- -\frac{\Psi^-}{\partial_1\widehat{\Phi}^-}\,\partial_1\widehat{U}^- .
\label{b23}
\end{equation}
Omitting detailed calculations, we rewrite the linearized interior equations in terms of the new unknowns \eqref{b23}:
\begin{equation}
L(\widehat{U}^{\pm},\widehat{\Psi}^{\pm})\dot{U}^{\pm} +{\cal C}(\widehat{U}^{\pm},\widehat{\Psi}^{\pm})
\dot{U}^{\pm} - \frac{\Psi^{\pm}}{\partial_1\widehat{\Phi}^{\pm}}\,\partial_1\bigl\{\mathbb{L}
(\widehat{U}^{\pm},\widehat{\Psi}^{\pm})\bigr\}={f}^{\pm}.
\label{b24}
\end{equation}
Dropping as in \cite{Al,CS2,MoTraTre-vacuum,SecTra,ST,T09,Tcpam,Tjde} the zero-order terms in $\Psi^+$  and $\Psi^-$ in \eqref{b24},\footnote{In the future nonlinear analysis the dropped terms in \eqref{b24} should be considered as error terms at each Nash-Moser iteration step.} we write down the final form of our linearized problem for $(\dot{U}^+,\dot{U}^-,\varphi )$:
\begin{equation}
 L(\widehat{U}^{\pm},\widehat{\Psi}^{\pm})\dot{U}^{\pm} +\mathcal{C}(\widehat{U}^{\pm},\widehat{\Psi}^{\pm})
\dot{U}^{\pm} =f^{\pm}\qquad \mbox{in}\ \Omega_T,\label{b48}
\end{equation}
\begin{equation}
\left(
\begin{array}{c}
\dot{p}^+-\dot{p}^- + \varphi [\partial_1\hat{p}]\\[3pt]
\dot{v}^+-\dot{v}^-\\
\dot{H}_{\tau}^+-\dot{H}_{\tau}^-+ \varphi [\partial_1\widehat{H}_{\tau}]\\[3pt]
\partial_t\varphi +\hat{v}_2^+\partial_2\varphi +\hat{v}_3^+\partial_3\varphi -\dot{v}_{N}^+ - \varphi \partial_1\hat{v}_N^+
\end{array}
\right)=g \qquad \mbox{on}\ \partial\Omega_T,
\label{b50}
\end{equation}
\begin{equation}
(\dot{U}^+,\dot{U}^-,\varphi )=0\qquad \mbox{for}\ t<0,\label{b51}
\end{equation}
where
\[
\dot{v}_{N}^{\pm}=
\dot{v}_1^{\pm}-\dot{v}_2^{\pm}\partial_2\widehat{\Psi}^\pm -\dot{v}_3^{\pm}\partial_3\widehat{\Psi}^\pm , \quad
\dot{H}^{\pm}_{\tau}= (\dot{H}^{\pm}_{\tau_1}, \dot{H}^{\pm}_{\tau_2}),
\]
\[
 \dot{H}^{\pm}_{\tau_i}=\dot{H}_1^\pm \partial_{i+1}\widehat{\Psi}^{\pm}+\dot{H}^{\pm}_{i+1}, \quad i=1,2.
\]
We used the important condition $[\partial_1\hat{v}]=0$ for the basic state, cf. \eqref{jc1'}, while writing down the second line in the left-hand side of the boundary conditions \eqref{b50}. We assume that $f^{\pm}$ and $g$ vanish in the past and consider the case of zero initial data, which is the usual assumption.\footnote{The case of nonzero initial data is postponed to the nonlinear analysis (construction of a so-called approximate solution; see, e.g., \cite{CS2,T09}).}

From problem \eqref{b48}--\eqref{b51} we can deduce nonhomogeneous equations associated with the divergence constraints \eqref{14} and the ``redundant'' boundary condition \eqref{15} for the nonlinear problem. More precisely, we have the following.

\begin{proposition}
Let the basic state \eqref{a21} satisfies assumptions \eqref{a5}--\eqref{avn}.
Then solutions of problem \eqref{b48}--\eqref{b51} satisfy
\begin{equation}
{\rm div}\,\dot{h}^+=f_9^+,\quad {\rm div}\,\dot{h}^-=f_9^-\quad\mbox{in}\ \Omega_T,
\label{b43}
\end{equation}
\begin{equation}
\dot{H}_{N}^+-\dot{H}_N^-=g_8\quad\mbox{on}\ \partial\Omega_T.
\label{b44}
\end{equation}
Here
\[
\dot{h}^{\pm}=(\dot{H}_{N}^{\pm},\dot{H}_2^{\pm}\partial_1\widehat{\Phi}^{\pm},\dot{H}_3^{\pm}\partial_1\widehat{\Phi}^{\pm}),\quad
\dot{H}_{N}^{\pm}=\dot{H}_1^{\pm}-\dot{H}_2^{\pm}\partial_2\widehat{\Psi}^{\pm}-\dot{H}_3^{\pm}
\partial_3\widehat{\Psi}^{\pm}
\]
and the functions $f_9^{\pm}=f_9^{\pm}(t,x )$ and $g_8= g_8(t,x')$, which vanish in the past, are determined by the source terms and the basic state as solutions to the linear inhomogeneous equations
\begin{equation}
\partial_t a^{\pm}+ \frac{1}{\partial_1\widehat{\Phi}^{\pm}}\left\{ (\hat{w}^{\pm} ,\nabla a^{\pm}) + a^{\pm}\,{\rm div}\,\hat{u}^{\pm}\right\}={\mathcal F}^{\pm}\quad \mbox{in}\ \Omega_T,
\label{aa1}
\end{equation}
\begin{equation}
\partial_t g_8 +\partial_2(\hat{v}_2^+g_8)+\partial_3(\hat{v}_3^+g_8)
={\mathcal G}\quad \mbox{on}\ \partial\Omega_T,
\label{aa2}
\end{equation}
where $a^{\pm}=f_9^{\pm}/\partial_1\widehat{\Phi}^{\pm},\quad {\mathcal F}^{\pm}=({\rm div}\,{f}_{h}^{\pm})/\partial_1\widehat{\Phi}^{\pm}$,
\[
{f}_{h}^{\pm}=(f_{N}^{\pm} ,\partial_1\widehat{\Phi}^{\pm}f_6^{\pm},\partial_1\widehat{\Phi}^{\pm}f_7^{\pm}),\quad f_{N}^{\pm}=f_5^{\pm}-f_6^{\pm}\partial_2\widehat{\Psi}^{\pm}-
f_7^{\pm}\partial_3\widehat{\Psi}^{\pm},
\]
\[
{\mathcal G}=\left\{\left.[f_{N}]+\partial_2\bigl(\widehat{H}^+_2g_N\bigr)+\partial_3\bigl(\widehat{H}^+_3g_N\bigr)-\partial_2\bigl(\widehat{H}^+_Ng_3\bigr)
-\partial_3\bigl(\widehat{H}^+_Ng_4\bigr)\right\}\right|_{x_1=0},
\]
\[
[f_{N}]=f_{N}^+|_{x_1=0}-f_{N}^-|_{x_1=0},\quad g_N=g_2-g_3\partial_2\hat{\varphi}-g_4\partial_3\hat{\varphi}.
\]
\label{p3.1}
\end{proposition}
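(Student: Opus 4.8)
The plan is to mimic the derivation of Proposition~\ref{p1} (essentially Appendix~A of \cite{T09}): the relations \eqref{b43} and \eqref{b44} are not additional constraints on $(\dot U^+,\dot U^-,\varphi)$ but identities forced by the magnetic block of the linearized interior equations \eqref{b48} together with the constraints \eqref{b21}, \eqref{b14'}, \eqref{b15}, \eqref{avn} built into the basic state. Accordingly, $f_9^\pm$ and $g_8$ are \emph{defined} as the unique past-vanishing solutions of the transport problems \eqref{aa1}, \eqref{aa2}, and the proof reduces to showing that ${\rm div}\,\dot h^\pm$ and $\dot H_N^+|_{x_1=0}-\dot H_N^-|_{x_1=0}$ solve exactly these problems with zero past data.

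First I would isolate the three rows of \eqref{b48} governing $\dot H^\pm$; collecting the contributions of $L(\widehat U^\pm,\widehat\Psi^\pm)$ and of the zero-order matrix $\mathcal C(\widehat U^\pm,\widehat\Psi^\pm)$ on the magnetic components, these are the linearization about $\widehat H^\pm$ of the exact equation \eqref{b21}, with source $(f_5^\pm,f_6^\pm,f_7^\pm)$. Then I would form $\dot h^\pm=(\dot H_N^\pm,\partial_1\widehat\Phi^\pm\dot H_2^\pm,\partial_1\widehat\Phi^\pm\dot H_3^\pm)$ as in \eqref{b43}, apply ${\rm div}$, and reorganize the result: all terms quadratic in the background and all lower-order terms coming from $\mathcal C$ cancel once one uses that the basic state itself satisfies \eqref{b21} and ${\rm div}\,\hat h^\pm=0$ (see \eqref{b14'}), exactly as in the nonlinear computation of \cite{T09}. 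What remains is precisely \eqref{aa1} for $a^\pm:={\rm div}\,\dot h^\pm/\partial_1\widehat\Phi^\pm$ with $\mathcal F^\pm=({\rm div}\,f_h^\pm)/\partial_1\widehat\Phi^\pm$. Since $f^\pm=0$ for $t<0$ we get $\mathcal F^\pm=0$ there, and since $(\dot U^\pm,\varphi)=0$ for $t<0$ we get ${\rm div}\,\dot h^\pm=0$ there; by uniqueness for \eqref{aa1}, $f_9^\pm:=\partial_1\widehat\Phi^\pm a^\pm={\rm div}\,\dot h^\pm$, which is \eqref{b43}.

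For \eqref{b44} I would restrict the $\dot H^\pm$-rows to $\{x_1=0\}$, where $\partial_1\widehat\Phi^\pm=\pm1$ and, by the last condition in \eqref{b50} and the continuity of the velocity, the normal traces $\dot v_N^\pm|_{x_1=0}$ are expressible through $\partial_t\varphi$, the tangential derivatives of $\varphi$, $\varphi\,\partial_1\hat v_N^\pm$ and $g$. The background identities $[\partial_1\hat v_N]=0$ from \eqref{avn} and $[\widehat H_N]=0$ from \eqref{b15} are exactly what makes the normal component of the magnetic equation close on the boundary, as in the proofs of Propositions~\ref{p1} and \ref{p2}. Passing to the jump of the normal component and using the $H_\tau$-continuity conditions in \eqref{b50} and $[\partial_1\hat v]=0$ from \eqref{jc1'}, the $\varphi$- and background-dependent terms should regroup into a conservative first-order transport operator along $\partial_t+\hat v_2^+\partial_2+\hat v_3^+\partial_3$ acting on $\dot H_N^+-\dot H_N^-$, with forcing $\mathcal G$ assembled from $[f_N]$ and tangential derivatives of the components of $g$ precisely as in the statement. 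Thus $g_8:=\dot H_N^+|_{x_1=0}-\dot H_N^-|_{x_1=0}$ solves \eqref{aa2}; it vanishes for $t<0$, so by uniqueness it is the function appearing in \eqref{b44}.

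The main obstacle is purely computational: verifying that in these two reorganizations every term absent from \eqref{aa1}, respectively \eqref{aa2}, demonstrably drops out, which is exactly where the full strength of the nonlinear constraints \eqref{a5}--\eqref{avn} on the basic state (in particular \eqref{b21}, \eqref{b14'}, \eqref{b15}, \eqref{avn}, \eqref{jc1'}) is consumed. I would handle it by first writing the linearized magnetic equations with all coefficients ($1/\partial_1\widehat\Phi^\pm$, $\partial_k\widehat\Psi^\pm$, $\widehat U^\pm$ and its first derivatives) made explicit, then differentiating, and only at the end substituting the background identities; the bookkeeping is identical to that in \cite{T09}, to which the detailed verification can be deferred.
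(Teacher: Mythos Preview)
Your proposal is correct and follows essentially the same approach as the paper: extract the linearized magnetic block \eqref{aA3}, take ${\rm div}$ and use the background constraints \eqref{b21}, \eqref{b14'} to obtain the transport equation \eqref{aa1} for ${\rm div}\,\dot h^\pm/\partial_1\widehat\Phi^\pm$, then restrict \eqref{aA3} to $x_1=0$, take the jump of the normal component, and use \eqref{b50}, \eqref{b21}, \eqref{b14'}, \eqref{b15} to obtain \eqref{aa2} for $[\dot H_N]$, deferring the detailed bookkeeping to \cite{T09}. The only cosmetic difference is that you frame the argument via uniqueness of past-vanishing solutions to the transport problems, whereas the paper simply asserts that ${\rm div}\,\dot h^\pm$ and $[\dot H_N]$ satisfy \eqref{aa1}, \eqref{aa2}; these are equivalent formulations.
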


\begin{proof}
The proof of equations \eqref{b43} is absolutely the same as the corresponding proof in \cite{T09}. That is, we write down the equation for $\dot{H}^{\pm}$ contained in \eqref{b48}:
\begin{multline}
{\displaystyle
\partial_t\dot{H}^{\pm}+\frac{1}{\partial_1\widehat{\Phi}^{\pm}}\Bigl\{ (\hat{w}^{\pm} ,\nabla )
\dot{H}^{\pm} - (\hat{h}^{\pm} ,\nabla ) \dot{v}^{\pm} + \widehat{H}^{\pm}{\rm div}\,\dot{u}^{\pm}
}\\
+(\dot{u}^{\pm} ,\nabla )\widehat{H}^{\pm} - (\dot{h}^{\pm} ,\nabla ) \hat{v}^{\pm} + \dot{H}^{\pm}{\rm div}\,\hat{u}^{\pm} \Bigr\} ={f}^{\pm}_{H},
\label{aA3}
\end{multline}
where $\dot{u}^{\pm} =(\dot{v}_{N}^{\pm},\dot{v}_2^{\pm}\partial_1\widehat{\Phi}^{\pm}, \dot{v}_3^{\pm}\partial_1\widehat{\Phi}^{\pm})$ and ${f}^{\pm}_{H}=(f_5^{\pm},f_6^{\pm},f_7^{\pm})$. Then, using not only the divergence constraints \eqref{b14'} for the basic state but also equations \eqref{b21} for $\widehat{H}^{\pm}$ themselves, after long calculations, which are omitted, from (\ref{aA3}) we obtain that $f_9^{\pm}={\rm div}\,\dot{h}^{\pm}$ satisfy  (\ref{aa1}) (with $a^{\pm}=f_9^{\pm}/\partial_1\widehat{\Phi}^{\pm}$).

Regarding the additional boundary condition \eqref{b44}, for its derivation we again use (\ref{aA3}) but now being considered at $x_1=0$. Namely, using the boundary conditions (\ref{b50}), system (\ref{b21}) and  constraints (\ref{b14'}) at $x_1=0$ as well as \eqref{b15},  from (\ref{aA3}) being considered at $x_1=0$ we get equation (\ref{aa2}) for $g_8=[\dot{H}_N]$. That is, the proof of Proposition \ref{p3.1} is complete.
\end{proof}

\subsection{Constant coefficients linearized problem for a planar discontinuity}
\label{constcoeff}

If we ``freeze'' coefficients of problem \eqref{b48}--\eqref{b51}, drop the zero-order terms in \eqref{b48} and the zero-order terms in $\varphi$ in \eqref{b50}, assume that $\partial_t\hat{\varphi}=\partial_2\hat{\varphi}=\partial_3\hat{\varphi}=0$, and in the change of variables take $\chi\equiv 1$, then we obtain a constant coefficients linear problem which is the result of the linearization of the original nonlinear free boundary value problem \eqref{3Dsys}, \eqref{bcond}, \eqref{indat} about its {\it exact} piecewise constant solution
\[
U^{\pm}=\widehat{U}^{\pm}=(\hat{p},0,\hat{v}_2,\hat{v}_3,\widehat{H}_1,\widehat{H}_2,\widehat{H}_3,\widehat{S}^{\pm})={\rm const}\qquad \mbox{for}\ x_1\gtrless0
\]
for the planar contact discontinuity $x_1=0$. This exact piecewise constant solution satisfies \eqref{5} and \eqref{cdass}:\footnote{For the constant coefficients problem we do not restrict ourselves to the case of a polytropic gas and consider a general equation of state $\rho =\rho (p,S)$.}
\[
\hat{\rho}^{\pm}=\rho (\hat{p},\widehat{S}^{\pm})>0,\quad (\hat{c}^{\pm})^2= 1/\rho_p (\hat{p},\widehat{S}^{\pm})>0, \quad \widehat{H}_1\neq 0.
\]
Since we can perform the Galilean transformation
\[
\tilde{x}_2=x_2 -\hat{v}_2t,\quad \tilde{x}_3=x_3 -\hat{v}_3t,
\]
without loss of generality we may assume that $\hat{v}_2=\hat{v}_3=0$.

Taking into account the above, we have the following constant coefficients problem:
\begin{equation}
\widehat{A}_0^{\pm}\partial_t{U}^{\pm}\pm \widehat{A}_1\partial_1{U}^{\pm}+\sum_{j=2}^{3}\widehat{A}_j\partial_j{U}^{\pm}=f^{\pm} \qquad \mbox{in}\ \Omega_T,\label{99}
\end{equation}
\begin{equation}
\left(
\begin{array}{c}
{p}^+-{p}^- \\[3pt]
{v}^+-{v}^-\\
{H}_2^+-{H}_2^-\\[3pt]
{H}_3^+-{H}_3^-\\[3pt]
\partial_t\varphi -{v}_1^+
\end{array}
\right)=g \qquad \mbox{on}\ \partial\Omega_T,
\label{100}
\end{equation}
\begin{equation}
({U}^+,{U}^-,\varphi )=0\qquad \mbox{for}\ t<0,\label{101}
\end{equation}
where $\widehat{A}_0^\pm= {\rm diag} (1/(\hat{\rho}^{\pm}(\hat{c}^{\pm})^2) ,\hat{\rho}^{\pm} ,\hat{\rho}^{\pm} ,\hat{\rho}^{\pm} ,1 ,1,1,1)$,
\[
\widehat{A}_1=\left( \begin{array}{cccccccc} 0&1&0&0&0&0&0&0\\[6pt]
1& 0&0&0&0&\widehat{H}_2&\widehat{H}_3&0\\
0&0& 0&0&0&-\widehat{H}_1&0&0\\ 0&0&0& 0&0&0&-\widehat{H}_1&0\\
0&0&0&0&0&0&0&0\\
0&\widehat{H}_2&-\widehat{H}_1&0&0&0&0&0\\
0&\widehat{H}_3&0&-\widehat{H}_1&0&0&0&0\\ 0&0&0&0&0&0&0&0\\
\end{array} \right) ,
\]
and the matrices $\widehat{A}_2$ and $\widehat{A}_3$ can be also easily written down.

We can  reduce \eqref{99}--\eqref{101} to a problem with homogeneous boundary conditions, i.e., with $g=0$. To this end we use the classical argument suggesting to subtract from the solution a more regular function $\widetilde{U}^{\pm}\in H^1(\Omega_T)$ satisfying the boundary conditions. Then, the new unknown $U^{\natural\pm}={U}^\pm-\widetilde{U}^\pm$ satisfies problem \eqref{99}--\eqref{101} with $g=0$ and $f^\pm =F^\pm$, where
\[
F^\pm =f^\pm-\widehat{A}_0^{\pm}\partial_t\widetilde{U}^\pm \mp \widehat{A}_1\partial_1\widetilde{U}^\pm-\sum_{j=2}^{3}\widehat{A}_j\widetilde{U}^\pm
\]
and
\begin{multline}
\sum_{\pm}\|F^\pm\|_{L^2(\Omega_T)}\leq C  \sum_{\pm}\left\{\|f^\pm \|_{L^{2}(\Omega_T)}+ \|\widetilde{U}^\pm \|_{H^{1}(\Omega_T)}\right\}\\
\leq C\biggl\{\sum_{\pm}\|f^\pm \|_{L^{2}(\Omega_T)}+ \|g\|_{H^{1/2}(\partial\Omega_T)} \biggr\}.
\label{nonhom_F}
\end{multline}
Here and later on $C$ is a constant that can change from line to line, and it may depend from another constants.

Since, by virtue of the homogenous boundary conditions \eqref{100} for $U^{\natural\pm}$, we have
\begin{multline*}
[(\widehat{A}_1U^{\natural},U^{\natural})] = (\widehat{A}_1U^{\natural +},U^{\natural +})|_{x_1=0} -(\widehat{A}_1U^{\natural -},U^{\natural -})|_{x_1=0} \\
=2[v_1^{\natural}(p^{\natural} +\widehat{H}_2H_2^{\natural}+\widehat{H}_3H_3^{\natural})] -2\widehat{H}_1[v_2^{\natural}H_2^{\natural}+v_3^{\natural}H_3^{\natural}]=0,
\end{multline*}
standard simple arguments of the energy method applied to systems \eqref{99} give the conserved integral \cite{BThand}
\[
\frac{d}{dt} \biggl(\sum_{\pm}\int_{\mathbb{R}^3_+}(\widehat{A}_0^\pm U^{\natural \pm},U^{\natural \pm}) dx \biggr) =0
\]
if $F^{\pm}=0$. For the general case when $F^{\pm}\neq 0$, we easily get the priori estimate
\[
\sum_{\pm}\|U^{\natural\pm} \|_{L^{2}(\Omega_T)}\leq C \sum_{\pm}\|F^\pm\|_{L^2(\Omega_T)},
\]
for $U^{\natural\pm}$ with no loss of derivatives from $F^{\pm}$. Taking into account \eqref{nonhom_F}, this estimate gives the a priori estimate
\begin{equation}
\sum_{\pm}\|U^{\pm} \|_{L^{2}(\Omega_T)}\leq C \biggl\{\sum_{\pm}\|f^\pm \|_{L^2(\Omega_T)}+ \|g\|_{H^{1/2}(\partial\Omega_T)}\biggr\}
\label{orig_U}
\end{equation}
for the original unknown $U^\pm$ with the loss of ``1/2 derivative'' from $g$ (regarding an a priori estimate for the front perturbation $\varphi$, see Remark \ref{r7} below).

It follows from estimate \eqref{orig_U} that Kelvin-Helmholtz instability never happens for planar contact MHD discontinuities, i.e., they are always (at least, neutrally) stable. It is interesting to note that it seems technically impossible to show this by the spectral analysis because already at the first stage we come to the dispersion relations
\[
\det (s\widehat{A}_0^{\pm}+\lambda \widehat{A}_1 + i\omega_2\widehat{A}_2+i\omega_3\widehat{A}_3)=0
\]
which cannot be analytically solved for the eigenvalues $\lambda$ (even in the 2D planar case \cite{T}). Here $s=\eta +i\xi$, with $\eta >0$, $\xi\in \mathbb{R}$, and $\omega =(\omega_2, \omega_3)\in\mathbb{R}^2$ are the Laplace and Fourier variables respectively.

The fact that the uniform Kreiss-Lopatinski condition \cite{Kreiss} is never satisfied, i.e., planar contact MHD discontinuities are only neutrally stable follows from the non-ellipticity of the front symbol discussed in Section \ref{intro}. Indeed, as was noticed in \cite{IT}, the non-ellipticity of the front symbol always implies the existence of neutral modes. For contact MHD discontinuities, this neutral mode is $s=0$.\footnote{In the original reference frame where $\hat{v}'=(\hat{v}_2,\hat{v}_3)$ is not necessarily zero this neutral mode is $s=-i(\omega,\hat{v}')$.} It corresponds to the following non-trivial normal mode solution of problem \eqref{99}, \eqref{100} with $f^\pm=0$ and $g=0$:
\[
\varphi =\bar{\varphi}e^{i(\omega ,x')},\quad (p^\pm ,v^\pm ,H^\pm )=0,\quad S^{\pm}=\bar{S}^{\pm}e^{i(\omega ,x')},
\]
where $\bar{S}^{\pm}$ and $\bar{\varphi}$ are arbitrary constants. It is natural that in our case when the Kreiss-Lopatinski condition holds only in a weak sense we have a loss of derivatives in the a priori estimate \eqref{orig_U}.

\begin{remark}
{\rm
For estimating the front perturbation $\varphi$ we have to prolong problem \eqref{99}, \eqref{100} up to first-order tangential derivatives (with respect to $t$, $x_2$ and $x_3$). Then, we can estimate the $L^2$ norms of the tangential derivatives of $U^{\pm}$. Taking into account the structure of the boundary matrix $\widehat{A}_1$, we can express $\partial_1v_1^{\pm}$ through these tangential derivatives. After this, using the last boundary condition in \eqref{100} and the trace theorem for $v_1^+$, we get an a priori estimate for $\varphi$ in $L^2$ (see also \cite{BThand}).}
\label{r7}
\end{remark}

\subsection{Linearized problem for the 2D planar case and main result}

In the rest of the paper, we restrict ourselves to 2D planar MHD flows, i.e., when
\[
x=(x_1,x_2)\in \mathbb{R}^2,\quad v=(v_1,v_2)\in \mathbb{R}^2,\quad H=(H_1,H_2)\in \mathbb{R}^2.
 \]
Then, for a polytropic gas the symmetric form of the MHD equations is system \eqref{4}. For the 2D planar case the counterpart of the linearized problem \eqref{b48}--\eqref{b51} reads:
\begin{equation}
A_0(\widehat{U}^{\pm})\partial_t\dot{U}^{\pm} +\widetilde{A}_1(\widehat{U}^{\pm},\widehat{\Psi}^{\pm})\partial_1\dot{U}^{\pm}+A_2(\widehat{U}^{\pm} )\partial_2\dot{U}^{\pm} \\ +\mathcal{C}(\widehat{U}^{\pm},\widehat{\Psi}^{\pm})\dot{U}^{\pm} =f^{\pm}\qquad \mbox{in}\ \Omega_T,\label{b48^}
\end{equation}
\begin{equation}
\left(
\begin{array}{c}
\dot{p}^+-\dot{p}^- + \varphi [\partial_1\hat{p}]\\[3pt]
\dot{v}_1^+-\dot{v}_1^-\\[3pt]
\dot{v}_2^+-\dot{v}_2^-\\[3pt]
\dot{H}_{\tau}^+-\dot{H}_{\tau}^-+ \varphi [\partial_1\widehat{H}_{\tau}]\\[3pt]
\partial_t\varphi +\hat{v}_2^+\partial_2\varphi -\dot{v}_{N}^+ - \varphi \partial_1\hat{v}_N^+
\end{array}
\right)=g \qquad \mbox{on}\ \partial\Omega_T,
\label{b50^}
\end{equation}
\begin{equation}
(\dot{U}^+,\dot{U}^-,\varphi )=0\qquad \mbox{for}\ t<0,\label{b51^}
\end{equation}
where
\[
\Omega_T:= (-\infty, T]\times\mathbb{R}^2_+,\quad \partial\Omega_T:=(-\infty ,T]\times\{x_1=0\}\times\mathbb{R},
\]
\[
\widetilde{A}_1(\widehat{U}^{\pm},\widehat{\Psi}^{\pm})=\frac{1}{\partial_1\widehat{\Phi}^{\pm}}\left(
A_1(\widehat{U}^{\pm})-A_0(\widehat{U}^{\pm})\partial_t\widehat{\Psi}^{\pm}-A_2(\widehat{U}^{\pm})\partial_2\widehat{\Psi}^{\pm}\right),
\]
the matrices $A_0$, $A_1$ and $A_2$ are described just after system \eqref{4},
\[
\dot{U}^{\pm}=(\dot{p}^{\pm},\dot{v}_1^\pm ,\dot{v}_2^\pm ,\dot{H}_1^\pm ,\dot{H}_2^\pm ,\dot{S}^\pm ) ,\quad
f^{\pm} =(f^{\pm}_1,\ldots ,f^{\pm}_6),\quad g=(g_1,\ldots , g_5),
\]
\[
\dot{v}_{N}^{\pm}=
\dot{v}_1^{\pm}-\dot{v}_2^{\pm}\partial_2\widehat{\Psi}^{\pm}, \quad \hat{v}_{N}^{\pm}=\hat{v}_1^{\pm}-\hat{v}_2^{\pm}\partial_2\widehat{\Psi}^{\pm},\quad
\dot{H}_{\tau}^{\pm}=\dot{H}_1^{\pm}\partial_2\widehat{\Psi}^{\pm} +\dot{H}_2^{\pm},\quad \mbox{etc.}
\]

In view of the counterparts of \eqref{boundmatr} and \eqref{boundmatr'} for the 2D planar case, the matrices $\widetilde{A}_1(\widehat{U}^{\pm},\widehat{\Psi}^{\pm})$ have the following structure:
\begin{equation}
\widetilde{A}_1(\widehat{U}^{\pm},\widehat{\Psi}^{\pm})=\widehat{\mathcal{A}}_1^\pm +\widehat{\mathcal{A}}_0^\pm ,
\label{bm_2d}
\end{equation}
where $\widehat{\mathcal{A}}_0^\pm |_{x_1=0}=0$ (the exact form of the matrices $\widehat{\mathcal{A}}_0^\pm$ is of no interest) and
\begin{equation}
(\widehat{\mathcal{A}}_1^\pm \dot{U}^{\pm},\dot{U}^{\pm})=((\widehat{J}^\pm )^T \widehat{\mathcal{A}}_1^\pm\widehat{J}^\pm\dot{W}^\pm,\dot{W}^\pm )=
(\widehat{\mathcal{B}}_1^\pm \dot{W}^\pm,\dot{W}^\pm ),
\label{bm_2d'}
\end{equation}
\begin{equation}
\widehat{\mathcal{B}}_1^{\pm}=\frac{1}{\partial_1\widehat{\Phi}^{\pm}}\begin{pmatrix}
0 & e_1 & 0 & 0 \\[3pt]
e_1^T &O_2& -\widehat{H}_N^{\pm}a_0^\pm & 0^T\\[3pt]
0^T & -\widehat{H}_N^{\pm}a_0^\pm & O_2& 0^T\\[3pt]
0 & 0 &0 & 0
\end{pmatrix},
\label{bm_2d"}
\end{equation}
with $e_1= (1,0)$, $\dot{U}^\pm =\widehat{J}^\pm \dot{W}^\pm$, $\det \widehat{J}^\pm \neq 0$,
\begin{equation}
\dot{W}^{\pm}= (\dot{q}^{\pm},\dot{v}^{\pm}_N,\dot{v}_2^{\pm},\dot{H}^{\pm}_N,\dot{H}^{\pm}_2,\dot{S}^{\pm}),\quad \dot{q}^{\pm}=\dot{p}^{\pm}+\widehat{H}_1^{\pm}\dot{H}_1^{\pm}+\widehat{H}_2^{\pm}\dot{H}_2^{\pm},
\label{bm_2d^}
\end{equation}
and
\[
a_0^{\pm}=\begin{pmatrix}
1 & \partial_2\widehat{\Psi}^{\pm}  \\[3pt]
\partial_2\widehat{\Psi}^{\pm} & 1+(\partial_2\widehat{\Psi}^{\pm})^2
\end{pmatrix} >0.
\]
Moreover, for writing down the quadratic forms with the matrices $\widetilde{A}_1(\widehat{U}^{\pm},\widehat{\Psi}^{\pm})$ on the boundary we will use their exact form (cf. \eqref{A1tilde})
\begin{equation}
\widetilde{A}_1(\widehat{U}^{\pm},\widehat{\Psi}^{\pm})|_{x_1=0} = \pm \begin{pmatrix} 0 & 1 & -\partial_2\hat{\varphi} & 0& 0 & 0\\[6pt]
1 &0 &0 &\widehat{H}_2^\pm\partial_2\hat{\varphi} & \widehat{H}_2^\pm & 0 \\[6pt]
-\partial_2\hat{\varphi} & 0 & 0 & -\widehat{H}_1^\pm\partial_2\hat{\varphi} & -\widehat{H}_1^\pm & 0 \\[6pt]
0 & \widehat{H}_2^\pm\partial_2\hat{\varphi} & -\widehat{H}_1^\pm\partial_2\hat{\varphi} & 0& 0& 0 \\[6pt]
0& \widehat{H}_2^\pm & -\widehat{H}_1^\pm & 0 & 0 & 0 \\[6pt]
0 & 0 &0 &0 &0 &0
%\widehat{N}^T & O_3 & \widehat{N}^T \otimes  \widehat{H}^{\pm} -  \widehat{H}_N^{\pm} I_2 & 0^T\\[6pt]
%0^T & (\widehat{H}^{\pm})^T \otimes  \widehat{N}-  \widehat{H}_N^{\pm} I_2 & O_2 & 0^T\\[3pt]
%0& 0& 0 & 0
\end{pmatrix}_{|x_1=0}.
\label{A1tilde2D}
\end{equation}
%where $\widehat{N}=(1, -\partial_2\hat{\varphi})$.

As for the case of constant coefficients in \eqref{99}--\eqref{101}, for problem \eqref{b48^}--\eqref{b51^} we will not use the boundary constraint associated with \eqref{15}. Therefore, we do not include it in the 2D counterpart of Proposition \ref{p3.1} below.

\begin{proposition}
Let the basic state \eqref{a21} satisfies assumptions \eqref{a5}--\eqref{avn} (to be exact, their 2D planar analogs).
Then solutions of problem \eqref{b48^}--\eqref{b51^} satisfy
\begin{equation}
{\rm div}\,\dot{h}^+=f_7^+,\quad {\rm div}\,\dot{h}^-=f_7^-\quad\mbox{in}\ \Omega_T.
\label{b43^}
\end{equation}
Here $\dot{h}^{\pm}=(\dot{H}_{N}^{\pm},\dot{H}_2^{\pm}\partial_1\widehat{\Phi}^{\pm})$ and the functions $f_7^{\pm}=f_7^{\pm}(t,x )$, which vanish in the past, are determined by the source terms and the basic state as solutions to the linear inhomogeneous equations
\begin{equation}
\partial_t a^{\pm}+ \frac{1}{\partial_1\widehat{\Phi}^{\pm}}\left\{ (\hat{w}^{\pm} ,\nabla a^{\pm}) + a^{\pm}\,{\rm div}\,\hat{u}^{\pm}\right\}={\mathcal F}^{\pm}\quad \mbox{in}\ \Omega_T,
\label{aa1^}
\end{equation}
where $a^{\pm}=f_7^{\pm}/\partial_1\widehat{\Phi}^{\pm},\quad {\mathcal F}^{\pm}=({\rm div}\,{f}_{h}^{\pm})/\partial_1\widehat{\Phi}^{\pm}$,
${f}_{h}^{\pm}=(f_{N}^{\pm} ,\partial_1\widehat{\Phi}^{\pm}f_5^{\pm})$, $f_{N}^{\pm}=f_4^{\pm}-f_5^{\pm}\partial_2\widehat{\Psi}^{\pm}$, and the vectors $\hat{w}^\pm$ and $\hat{u}^\pm$ are the 2D analogs of the corresponding vectors introduced above.
\label{p3.1'}
\end{proposition}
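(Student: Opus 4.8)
The plan is to mimic the proof of Proposition \ref{p3.1}, discarding everything connected with the redundant boundary constraint \eqref{15}/\eqref{b44} and specializing all vector operations to two space dimensions. First I would extract from the linearized interior system \eqref{b48^} the two equations for $\dot{H}^{\pm}=(\dot{H}_1^{\pm},\dot{H}_2^{\pm})$; written out, they take the form of the 2D analog of \eqref{aA3},
\begin{multline*}
\partial_t\dot{H}^{\pm}+\frac{1}{\partial_1\widehat{\Phi}^{\pm}}\Bigl\{ (\hat{w}^{\pm} ,\nabla )
\dot{H}^{\pm} - (\hat{h}^{\pm} ,\nabla ) \dot{v}^{\pm} + \widehat{H}^{\pm}{\rm div}\,\dot{u}^{\pm}\\
+(\dot{u}^{\pm} ,\nabla )\widehat{H}^{\pm} - (\dot{h}^{\pm} ,\nabla ) \hat{v}^{\pm} + \dot{H}^{\pm}{\rm div}\,\hat{u}^{\pm} \Bigr\} = (f_4^{\pm},f_5^{\pm}),
\end{multline*}
with $\nabla=(\partial_1,\partial_2)$, with $\hat{w}^{\pm},\hat{u}^{\pm},\hat{h}^{\pm},\dot{u}^{\pm}$ the 2D versions of the vectors introduced earlier, and with $\dot{h}^{\pm}=(\dot{H}_N^{\pm},\partial_1\widehat{\Phi}^{\pm}\dot{H}_2^{\pm})$.

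Next I would apply to this system the differential operator producing ${\rm div}\,\dot{h}^{\pm}=\partial_1\dot{H}_N^{\pm}+\partial_2(\partial_1\widehat{\Phi}^{\pm}\dot{H}_2^{\pm})$: one takes the first component equation, adds $-\partial_2\widehat{\Psi}^{\pm}$ times the second so as to reconstruct the equation for $\dot{H}_N^{\pm}$, differentiates it by $\partial_1$, then differentiates $\partial_1\widehat{\Phi}^{\pm}$ times the second component equation by $\partial_2$, and adds the results (the mixed second derivatives of $\widehat{\Psi}^{\pm}$ hitting $\dot{H}_2^{\pm}$ cancel automatically). The key point is that all the remaining first-order terms in $\dot{v}^{\pm}$ then cancel; this cancellation uses the divergence constraints \eqref{b14'} for the basic state together with the fact that $\widehat{H}^{\pm}$ itself solves the induction equations \eqref{b21} — as emphasized in Remark \ref{r5}, the constraints alone do not suffice. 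After these straightforward but lengthy manipulations, which I would omit exactly as in \cite{T09} and in the proof of Proposition \ref{p3.1}, one obtains a closed scalar transport equation for $a^{\pm}:=f_7^{\pm}/\partial_1\widehat{\Phi}^{\pm}$, namely \eqref{aa1^}, whose source term $\mathcal{F}^{\pm}=({\rm div}\,f_h^{\pm})/\partial_1\widehat{\Phi}^{\pm}$ collects the contributions of ${f}_H^{\pm}=(f_4^{\pm},f_5^{\pm})$, with $f_h^{\pm}=(f_N^{\pm},\partial_1\widehat{\Phi}^{\pm}f_5^{\pm})$ and $f_N^{\pm}=f_4^{\pm}-f_5^{\pm}\partial_2\widehat{\Psi}^{\pm}$.

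Finally, since $\dot{U}^{\pm}$ and $f^{\pm}$ vanish for $t<0$, so does $f_7^{\pm}$, hence $a^{\pm}$ vanishes in the past; as \eqref{aa1^} is a linear transport equation with smooth coefficients (controlled by \eqref{a22}) and zero Cauchy data, its solution is uniquely determined by $\mathcal{F}^{\pm}$, which yields the identities \eqref{b43^} for all $t\in(-\infty,T]$. The only delicate point — and the place where a sign or bookkeeping slip is easy — is the cancellation of the $\dot{v}^{\pm}$-terms when forming ${\rm div}\,\dot{h}^{\pm}$; it is the faithful 2D transcription of the corresponding step in \cite{T09}, and no genuinely new difficulty appears in the planar case, which is in fact simpler than the 3D situation of Proposition \ref{p3.1} since there is no boundary identity \eqref{b44} to keep track of.
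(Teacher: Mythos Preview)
Your proposal is correct and follows essentially the same approach as the paper: the paper does not give a separate proof of Proposition~\ref{p3.1'} but treats it as the straightforward 2D specialization of Proposition~\ref{p3.1}, whose proof in turn writes down the linearized induction equation \eqref{aA3}, applies the divergence-type operator, and invokes \eqref{b21} and \eqref{b14'} to obtain the transport equation \eqref{aa1} for ${\rm div}\,\dot{h}^{\pm}$. Your sketch reproduces exactly these steps in the planar setting, correctly emphasizes (as in Remark~\ref{r5}) that the cancellation of the $\dot v^{\pm}$-terms requires the full equations \eqref{b21} and not merely the constraint \eqref{b14'}, and rightly discards the boundary identity \eqref{b44}, which is not needed here.
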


\begin{remark}
{\rm
In view of \eqref{a12'}, $\hat{w}_1^\pm |_{x_1=0}=0$. Then equations \eqref{aa1^} do not need any boundary conditions and from them we easily get the estimates
\begin{equation}
\|f_7^\pm (t)\|_{L^2(\mathbb{R}^2_+)}\leq C\|{\rm div}\,{f}_{h}^{\pm}\|_{L^2(\Omega_T)}\leq C\|f^\pm \|_{H^1(\Omega_T)}
\label{f7'}
\end{equation}
and
\begin{equation}
\|f_7^\pm \|_{L^2(\Omega_T)}\leq  C\|f^\pm \|_{H^1(\Omega_T)}.
\label{f7}
\end{equation}
}
\label{rf7}
\end{remark}

We are now in a position to state the main result of this paper.

\begin{theorem}
Let the basic state \eqref{a21} satisfies assumptions \eqref{a5}--\eqref{avn} (in the 2D planar case). Let also
\begin{equation}
[\partial_1\hat{p} ]\geq \epsilon >0
\label{RTL}
\end{equation}
where $[\partial_1\hat{p} ]=\partial_1\hat{p}^+_{|x_1=0} +\partial_1\hat{p}^-_{|x_1=0}$ (see \eqref{norm_jump}) and $\epsilon$ is a fixed constant. Then, for all $f^{\pm} \in H^1(\Omega_T)$ and $g\in H^{3/2}(\partial\Omega_T)$ which vanish in the past, problem \eqref{b48^}--\eqref{b51^} has a unique solution $((\dot{U}^+, \dot{U}^-) ,\varphi )\in H^1(\Omega_T)\times H^1(\partial\Omega_T)$. Moreover, this solution obeys the a priori estimate
\begin{equation}
\sum_{\pm}\|\dot{U}^\pm \|_{H^{1}(\Omega_T)}+\|\varphi\|_{H^1(\partial\Omega_T)} \leq C\biggl\{\sum_{\pm}\|f^\pm \|_{H^{1}(\Omega_T)}+ \|g\|_{H^{3/2}(\partial\Omega_T)}\biggr\},
\label{main_est}
\end{equation}
where $C=C(K,\bar{\rho}_0,\bar{p}_0,\bar{\kappa}_0,\epsilon,T)>0$ is a constant independent of the data $f^\pm$ and $g$.
\label{t1}
\end{theorem}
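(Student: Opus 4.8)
The plan is to prove the a priori estimate \eqref{main_est} first (this is the heart of the matter) and then obtain existence by a duality/regularization argument. For the estimate, the key difficulty is that the front symbol is not elliptic, so $\varphi$ cannot be controlled by the boundary conditions directly and the loss of derivatives is genuine; the Rayleigh-Taylor condition \eqref{RTL} must be exploited to close the energy balance. First I would work with the "good unknowns" $\dot U^\pm$ already in hand and perform the basic energy argument: multiply \eqref{b48^} by $\dot U^\pm$ and integrate over $\Omega_t=(-\infty,t]\times\mathbb{R}^2_+$. Using $A_0>0$ (hyperbolicity \eqref{a5}) the volume term gives $\sum_\pm\|\dot U^\pm(t)\|_{L^2(\mathbb{R}^2_+)}^2$ up to a Gr\"onwall-absorbable contribution from $\mathcal C$ and the source terms, while the boundary contribution is the quadratic form $-\sum_\pm(\widetilde A_1(\widehat U^\pm,\widehat\Psi^\pm)\dot U^\pm,\dot U^\pm)|_{x_1=0}$. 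Here I would use the explicit boundary form \eqref{A1tilde2D} together with the boundary conditions \eqref{b50^}: expressing $\dot U^-$ boundary traces through $\dot U^+$ traces modulo $\varphi$-terms (continuity of $\dot p,\dot v_1,\dot v_2,\dot H_\tau$ up to the jumps $\varphi[\partial_1\hat p]$, $\varphi[\partial_1\widehat H_\tau]$), and the kinematic condition to replace $\dot v_N^+$. The crucial cancellation is that the "symmetric" part of the boundary quadratic form collapses, as in the constant-coefficient computation in Section \ref{constcoeff}, leaving a term proportional to $\varphi\,\partial_t\varphi\,[\partial_1\hat p]$ plus lower-order commutator terms. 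Integrating in time converts $\int \varphi\,\partial_t\varphi\,[\partial_1\hat p]$ into $\tfrac12[\partial_1\hat p]\varphi^2$ at time $t$ (modulo a term with $\partial_t[\partial_1\hat p]$ which is bounded by \eqref{a22}), and this is exactly where \eqref{RTL} enters with the correct sign to yield $\|\varphi(t)\|_{L^2(\mathbb{R})}^2$ on the good side of the inequality. This produces the $L^2$ estimate
\[
\sum_\pm\|\dot U^\pm\|_{L^2(\Omega_T)}^2+\|\varphi\|_{L^2(\partial\Omega_T)}^2
\le C\Bigl\{\sum_\pm\|f^\pm\|_{L^2(\Omega_T)}^2+\|g\|_{H^{1/2}(\partial\Omega_T)}^2\Bigr\},
\]
after first homogenizing the boundary conditions by subtracting a lifting of $g$ as in Section \ref{constcoeff}.

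Next I would upgrade to $H^1$. Tangential derivatives $\partial_t$, $\partial_2$ are handled by differentiating problem \eqref{b48^}--\eqref{b51^}: the differentiated system has the same principal part, the commutators with $A_0,\widetilde A_1,A_2,\mathcal C$ are controlled by $\|\widehat W\|_{W^2_\infty}\le C(K)$, the differentiated boundary conditions have the same structure with new (controllable) lower-order terms — and crucially the Rayleigh-Taylor term reappears with the same sign, so the same energy argument gives $L^2$ control of $\partial_{t,2}\dot U^\pm$ and of $\partial_{t,2}\varphi$, i.e. $\|\varphi\|_{H^1(\partial\Omega_T)}$ together with the noncharacteristic part of the first normal derivative. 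For the normal derivative $\partial_1\dot U^\pm$, I would use the structure \eqref{bm_2d}--\eqref{A1tilde2D} of the boundary matrix: its kernel corresponds to the characteristic components $\dot q^\pm,\dot H_N^\pm$ (and $\dot S^\pm$), so $\widetilde A_1\partial_1\dot U^\pm$ only "sees" the noncharacteristic part; solving \eqref{b48^} algebraically for $\partial_1$ of the noncharacteristic components gives them in terms of $\partial_t,\partial_2$ derivatives and source terms. For the remaining characteristic components one exploits the transport/constraint structure: $\partial_1\dot S^\pm$ is recovered from the entropy transport equation, $\partial_1 \dot H_N^\pm$ from the divergence relation \eqref{b43^} (using Proposition \ref{p3.1'} and the estimates \eqref{f7'}, \eqref{f7} for $f_7^\pm$), and $\partial_1\dot q^\pm$ by combining the remaining equations — this is the standard device for characteristic boundary problems. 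Collecting all of this yields the full $H^1$ estimate \eqref{main_est}.

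For existence, since \eqref{b48^} is a symmetric hyperbolic system with a characteristic boundary of constant multiplicity and the boundary conditions are maximally dissipative after homogenization (the $\varphi$ equation being an ODE along the boundary coupled in), I would first establish $L^2$ well-posedness: the dual problem (backward in time, with the adjoint boundary conditions) satisfies an analogous $L^2$ estimate — one checks the adjoint boundary conditions are again dissipative and the adjoint front equation again benefits from \eqref{RTL} — so by the standard duality argument a weak $L^2$ solution exists. Then the tangential-difference-quotient method, using the $H^1$ a priori estimates derived above applied to difference quotients in $t$ and $x_2$, upgrades this weak solution to $H^1$ in the tangential variables, and the algebraic recovery of $\partial_1\dot U^\pm$ described above gives full $H^1$ regularity; uniqueness is immediate from \eqref{main_est}. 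One technical point: the characteristic nature means the trace of the full $\dot U^\pm$ at $x_1=0$ is not controlled in $H^{1/2}$, only the noncharacteristic part is, so throughout one must be careful to estimate only those boundary traces that actually appear — exactly as reflected in the $H^{3/2}$ requirement on $g$ and the half-derivative loss.

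\textbf{Main obstacle.} The delicate step is the boundary quadratic form analysis: showing that after using all five boundary conditions in \eqref{b50^} the only surviving "dangerous" boundary term is the Rayleigh-Taylor term $\tfrac12[\partial_1\hat p]\,\|\varphi(t)\|^2$ with a favorable sign, while every other term is either a perfect time derivative, a lower-order commutator bounded via \eqref{a22}, or absorbed by $L^2$-norms of $\dot U^\pm$. Getting the $\varphi[\partial_1\hat p]\partial_t\varphi$ term to emerge cleanly — rather than, say, a term involving $\partial_1\hat v_N^+$ or the magnetic jumps that one cannot control — requires using $[\partial_1\hat v]=0$ from \eqref{jc1'} (already built into \eqref{b50^}) and the precise form \eqref{A1tilde2D}, and is the place where the whole mechanism hinges on the purely hydrodynamic condition \eqref{RTL} rather than a condition on the total pressure.
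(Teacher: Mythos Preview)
Your proposal has a genuine gap in the boundary quadratic form analysis, and it is precisely at the place you flag as the main obstacle. When you plug the homogeneous boundary conditions into \eqref{quadr}, the surviving boundary integrand is not just $[\partial_1\hat p]\,\varphi\,\partial_0\varphi$ plus innocuous commutators: from the magnetic jump $[H_\tau]=-\varphi[\partial_1\widehat H_\tau]$ paired with $\widehat H_N^+ v_2^+$ you get a term ${\rm coeff}\,\varphi\, v_2^+|_{x_1=0}$ with coefficient $-\widehat H_N^+[\partial_1\widehat H_\tau]$, and the trace of $v_2^+$ is \emph{not} controlled at the $L^2$ level (it is not a noncharacteristic component, and passing to a volume integral produces $\partial_1 v_2^+$). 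So the $L^2$ estimate you write down does not close. The same obstruction reappears at the $\partial_2$-differentiated level as a term ${\rm coeff}\,\partial_2 v_2^+\,\partial_2\varphi|_{x_1=0}$ (inside $R[\partial_2 H_\tau]$ with $R=\widehat H_N^+\partial_2 v_2^+-\widehat H_2^+\partial_2 v_N^+$), a product of two first-order boundary traces you are simultaneously trying to control; your sketch ``the Rayleigh--Taylor term reappears with the same sign, so the same energy argument gives \ldots'' does not account for this.

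The paper's device to remove this obstruction is something your proposal omits: one restricts the \emph{interior} equation for $H_N^+$ to the boundary (see \eqref{eqH}) and uses it to rewrite $R=-\partial_0 H_N^+ + \text{l.o.t.}$, converting the bad spatial trace $\partial_2 v_2^+$ into a material derivative $\partial_0 H_N^+$ that can be integrated by parts in time and then handled via the divergence constraint \eqref{b43b}. Only after this substitution does the Rayleigh--Taylor term emerge with the \emph{pure} coefficient $[\partial_1\hat p]$ (not $[\partial_1\hat p]+\widehat H_2^+[\partial_1\widehat H_\tau]$), and the remaining boundary terms become absorbable by passing to volume integrals together with the normal-by-tangential estimate of Proposition~\ref{p5}. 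Structurally, the paper never proves a stand-alone $L^2$ estimate; it estimates $\partial_0 U^\pm$ and $\partial_2 U^\pm$ simultaneously (with Proposition~\ref{p5} feeding $\partial_1$ back into tangential control) and closes by Gr\"onwall, with RT delivering $\|\partial_2\varphi\|_{L^2}^2$ rather than $\|\varphi\|_{L^2}^2$. For existence, the paper also does not dualize the characteristic problem directly (your claim that the adjoint ``again benefits from \eqref{RTL}'' is not the route taken and would require the same missing trick on the dual side); instead it adds $-\varepsilon\partial_1$ to make the problem noncharacteristic and strictly dissipative (Lemma~\ref{lem2}), proves a uniform-in-$\varepsilon$ version of the estimate above (Lemma~\ref{lem3}, which again hinges on \eqref{eqH} in the form \eqref{reqH'}), and passes to the limit.
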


Note that inequality \eqref{RTL} is just the Rayleigh-Taylor sign condition \eqref{RT} written for the straightened unperturbed discontinuity (with the equation $x_1=0$).

\section{Energy a priori estimate for the 2D planar case}
\label{sec:4}

\subsection{Reduction to homogeneous boundary conditions}

As for the case of constant coefficients in subsection \ref{constcoeff}, we reduce \eqref{b48^}--\eqref{b51^} to a problem with homogeneous boundary conditions, i.e., with and $g=0$. Using again the classical argument, we subtract from the solution a more regular function $\widetilde{U}^{\pm}\in H^2(\Omega_T)$ satisfying the boundary conditions \eqref{b50^}. Then, the new unknown
\begin{equation}
U^{\pm\natural}=\dot{U}^\pm-\widetilde{U}^\pm ,\label{a87'}
\end{equation}
with
\begin{equation}
\|\widetilde{U}^\pm \|_{H^1(\Omega_T)}\leq C\|g \|_{H^{1/2}(\partial\Omega_T)},
\label{tildU}
\end{equation}
satisfies problem \eqref{b48^}--\eqref{b51^} with $f^\pm =F^\pm =(F^\pm_1,\ldots , F^\pm_6)$, where
\begin{equation}
F^\pm =f^\pm-A_0(\widehat{U}^{\pm})\partial_t\widetilde{U}^{\pm} -\widetilde{A}_1(\widehat{U}^{\pm},\widehat{\Psi}^{\pm})\partial_1\widetilde{U}^{\pm}  -A_2(\widehat{U}^{\pm} )\partial_2\widetilde{U}^{\pm}-\mathcal{C}(\widehat{U}^{\pm},\widehat{\Psi}^{\pm})\widetilde{U}^\pm
\label{a87''}
\end{equation}
and $F^\pm$ satisfy estimates (cf. \eqref{nonhom_F}):
\begin{equation}
\sum_{\pm}\|F^\pm\|_{H^1(\Omega_T)}\leq C\biggl\{\sum_{\pm}\|f^\pm \|_{H^1(\Omega_T)}+ \|g\|_{H^{3/2}(\partial\Omega_T)} \biggr\}.
\label{nonhom_F'}
\end{equation}

Dropping for convenience the indices $^{\natural}$ in \eqref{a87'}, we get our reduced linearized problem:
\begin{equation}
A_0(\widehat{U}^{\pm})\partial_t{U}^{\pm} +\widetilde{A}_1(\widehat{U}^{\pm},\widehat{\Psi}^{\pm})\partial_1{U}^{\pm}+A_2(\widehat{U}^{\pm} )\partial_2{U}^{\pm} +\mathcal{C}(\widehat{U}^{\pm},\widehat{\Psi}^{\pm}){U}^{\pm} =F^{\pm}\qquad \mbox{in}\ \Omega_T,\label{b48b}
\end{equation}
\begin{eqnarray}
 {[}p{]}=- \varphi {[}\partial_1\hat{p}{]}, & \label{b50b.1}\\[3pt]
  {[}v{]}=0, & \label{b50b.2}\\[3pt]
  {[}H_{\tau}{]}=-\varphi {[}\partial_1\widehat{H}_{\tau}{]}, & \label{b50b.3} \\[3pt]
{v}_{N}^+ = \partial_0\varphi - \varphi \partial_1\hat{v}_N^+  & \qquad \mbox{on}\ \partial\Omega_T
\label{b50b.4}
\end{eqnarray}
\begin{equation}
({U}^+,{U}^-,\varphi )=0\qquad \mbox{for}\ t<0,\label{b51b}
\end{equation}
where
\begin{equation}
\partial_0:=  \partial_t +\hat{v}_2^+\partial_2 \qquad \mbox{in}\ \Omega_T.
\label{d0}
\end{equation}
It is not a big mistake to call $\partial_0$ the material derivative because on the boundary $\partial_0$ coincides with the material derivative $\partial_t + (\hat{w}^\pm ,\nabla )$ (in the reference frame related to the discontinuity). In view of \eqref{b43^} and \eqref{f7}, solutions to problem \eqref{b48b}--\eqref{b51b} satisfy
\begin{equation}
{\rm div}\,{h}^\pm=F_7^\pm\quad\mbox{in}\ \Omega_T
\label{b43b}
\end{equation}
where
\begin{equation}
\|F_7^\pm \|_{L^2(\Omega_T)}\leq  C\|F^\pm \|_{H^1(\Omega_T)}
\label{f7^}
\end{equation}
and we will also need the layerwise counterpart of \eqref{f7^} (cf. \eqref{f7'})
\begin{equation}
\|F_7^\pm (t)\|_{L^2(\mathbb{R}^2_+)}\leq  C\|F^\pm \|_{H^1(\Omega_T)}.
\label{f7"}
\end{equation}

Taking into account \eqref{tildU} and \eqref{nonhom_F'}, the following lemma for the reduced problem \eqref{b48b}--\eqref{b51b} yields estimate \eqref{main_est} for problem \eqref{b48^}--\eqref{b51^}.

\begin{lemma}
Let the basic state \eqref{a21} satisfies assumptions \eqref{a5}--\eqref{avn} (in the 2D planar case) together with condition \eqref{RTL}.
Then for all $F^{\pm} \in H^1(\Omega_T)$  which vanish in the past the a priori estimate
\begin{equation}
\sum_{\pm}\|{U}^\pm \|_{H^{1}(\Omega_T)}+\|\varphi\|_{H^1(\partial\Omega_T)} \leq C\sum_{\pm}\|F^\pm \|_{H^{1}(\Omega_T)}
\label{main_est'}
\end{equation}
holds for problem \eqref{b48b}--\eqref{b51b}, where $C=C(K,\bar{\rho}_0,\bar{p}_0,\bar{\kappa}_0,\epsilon,T)>0$ is a constant independent of the data $F^\pm$.
\label{lem1}
\end{lemma}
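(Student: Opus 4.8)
The plan is to derive the a priori estimate \eqref{main_est'} by a two-stage energy argument: first an $L^2$ estimate for the ``charged'' unknowns, then a tangential-derivative estimate, and finally recover the normal derivatives and the front from the equations themselves. The starting point is the standard energy identity obtained by taking the $L^2(\Omega_t)$ scalar product of \eqref{b48b} with $U^\pm$ and integrating by parts in $\Omega_t := (-\infty,t]\times\mathbb{R}^2_+$. Because the boundary $x_1=0$ is characteristic, the boundary term involves only the reduced matrix $\widehat{\mathcal{B}}_1^\pm$ acting on $\dot W^\pm$, i.e. on $(\dot q^\pm, \dot v_N^\pm, \dot v_2^\pm, \dot H_N^\pm, \dot H_2^\pm, \dot S^\pm)$; using \eqref{bm_2d'}--\eqref{bm_2d"} the quadratic form at the boundary collapses (since $\widehat{\mathcal{B}}_1^\pm$ has a huge kernel and the off-diagonal structure pairs $\check q$ with $\check v_N$ and $\check H_N$ with $\check H_2$) to an expression of the form $\pm\{q^\pm v_N^\pm/\partial_1\widehat\Phi^\pm - \widehat H_N^\pm(\cdots)\}$. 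Summing the $+$ and $-$ contributions and using the boundary conditions \eqref{b50b.1}--\eqref{b50b.4} — continuity of $p$, $v$, $H_\tau$, together with $v_N^+ = \partial_0\varphi - \varphi\partial_1\hat v_N^+$ — the magnetic pieces cancel as in the constant-coefficient computation of Section~\ref{constcoeff}, and the remaining boundary integral reduces essentially to $\int [p]\, v_N^+ = -\int \varphi[\partial_1\hat p]\,(\partial_0\varphi - \varphi\partial_1\hat v_N^+)$. The leading term is $-\tfrac12\int [\partial_1\hat p]\,\partial_0(\varphi^2)$, and integrating by parts in $\partial_0$ produces $+\tfrac12\int \partial_0[\partial_1\hat p]\,\varphi^2$ plus a good boundary term at time $t$; this is exactly where \eqref{RTL} enters: the sign $[\partial_1\hat p]\ge\epsilon>0$ converts the time-boundary term into a positive control of $\|\varphi(t)\|_{L^2}^2$, while $\partial_0[\partial_1\hat p]$ and $\partial_1\hat v_N^+$ are bounded by $C(K)$ and absorbed by Gronwall. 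This yields
\[
\sum_\pm\|U^\pm\|_{L^2(\Omega_t)}^2 + \|\varphi(t)\|_{L^2(\mathbb{R})}^2 \le C\Big(\sum_\pm\|F^\pm\|_{L^2(\Omega_t)}^2 + \int_0^t(\cdots)\,d\tau\Big),
\]
and Gronwall closes it, giving the $L^2$ estimate with no loss (the loss will appear only when we pass to tangential derivatives through the curved geometry and the variable coefficients).

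Next I would estimate the first-order tangential derivatives $\partial_0 U^\pm$ and $\partial_2 U^\pm$. Differentiating \eqref{b48b} tangentially gives the same operator applied to $\partial_\sigma U^\pm$ ($\sigma\in\{0,2\}$) plus commutator terms of the form $[\partial_\sigma, \widetilde A_j(\widehat U^\pm,\widehat\Psi^\pm)]\partial_j U^\pm$ and $(\partial_\sigma\mathcal C)U^\pm$; the dangerous commutator is the one hitting $\widetilde A_1$, since it contains $\partial_1 U^\pm$, which is a normal derivative not yet under control. However — and this is the structural point that makes the scheme work — the boundary matrix $\widetilde A_1$ vanishes at $x_1=0$ in its ``noncharacteristic'' block, so $\partial_\sigma\widetilde A_1$ contributes $\partial_\sigma\widehat{\mathcal A}_0^\pm$ which also vanishes on the boundary, meaning the commutator $\partial_1 U^\pm$-terms are multiplied by a function vanishing at $x_1=0$; one can write $\widehat{\mathcal A}_0^\pm = x_1\,(\text{bounded})$ near the boundary and use Hardy's inequality / the structure $x_1\partial_1$ to bound these by tangential derivatives plus $\|U^\pm\|_{H^1}$ with a small constant, or more cleanly one estimates the full $\|U^\pm\|_{H^1}$ norm at once and absorbs. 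For the boundary terms at this order, differentiating the boundary conditions \eqref{b50b.1}--\eqref{b50b.4} tangentially and repeating the cancellation/Rayleigh-Taylor argument controls $\|\partial_\sigma\varphi(t)\|_{L^2}$, hence $\|\varphi\|_{H^1(\partial\Omega_T)}$; here one derivative of $g$ is lost in the original (non-reduced) problem because $\widetilde U^\pm\in H^2$ requires $g\in H^{3/2}$, but inside Lemma~\ref{lem1} with $g=0$ there is no loss, consistent with \eqref{main_est'} having $\|F^\pm\|_{H^1}$ on the right.

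Finally I would recover the normal derivative $\partial_1 U^\pm$. Away from the boundary this is immediate from interior elliptic-type arguments, but near $x_1=0$ we must use that $\widetilde A_1$ is characteristic. The standard device (as in current-vortex sheets \cite{T09} and the plasma-vacuum problem \cite{SecTra}) is to split $U^\pm$ into its ``noncharacteristic'' part — the components $q^\pm, H_N^\pm$ (roughly $\dot q^\pm$ and $\dot H_N^\pm$ from \eqref{bm_2d^}), for which the equation can be solved for $\partial_1$ — and the ``characteristic'' part $(v_2^\pm, H_2^\pm, S^\pm, v_N^\pm - \cdots)$, for which one uses the divergence relation \eqref{b43b} (controlling $\partial_1\dot H_N^\pm$ via ${\rm div}\,\dot h^\pm = F_7^\pm$ with \eqref{f7"}), the transport equations for $S^\pm$ and the tangential magnetic/velocity components, and the already-estimated tangential derivatives. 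Solving for the noncharacteristic part costs one tangential derivative of $U^\pm$ and of $F^\pm$, which is why the source appears in $H^1$; combining with \eqref{f7^}--\eqref{f7"} gives $\sum_\pm\|\partial_1 U^\pm\|_{L^2(\Omega_T)}\le C\sum_\pm(\|U^\pm\|_{L^2} + \|\partial_0 U^\pm\|_{L^2} + \|\partial_2 U^\pm\|_{L^2} + \|F^\pm\|_{H^1})$, and assembling all three pieces yields \eqref{main_est'}. The main obstacle I expect is the clean bookkeeping of the boundary quadratic form: showing that after using all of \eqref{b50b.1}--\eqref{b50b.4} the only surviving uncontrolled term is the Rayleigh-Taylor one $\int\varphi[\partial_1\hat p]\partial_0\varphi$ and that everything else is either a perfect time derivative or lower order — in particular verifying that the magnetic contributions $\widehat H_N^\pm$, which are nonzero by \eqref{cdass}, genuinely cancel between the two sides thanks to $[H_\tau]=-\varphi[\partial_1\widehat H_\tau]$ and $[\partial_1\widehat H_N]=0$ from \eqref{jc1'} — and simultaneously controlling the characteristic-boundary commutators in the tangential-derivative step without losing more than one derivative.
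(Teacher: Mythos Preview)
Your overall strategy (energy method, tangential differentiation, recovery of normal derivatives from the structure of the boundary matrix and the divergence constraint) is the same as the paper's, but there is a genuine gap in your treatment of the magnetic boundary terms, and it is precisely the point you flag as ``the main obstacle''.

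You assert that ``the magnetic pieces cancel as in the constant-coefficient computation''. In variable coefficients this is false: from \eqref{b50b.3} one has $[H_\tau]=-\varphi[\partial_1\widehat H_\tau]\neq 0$ in general. Hence the boundary quadratic form (cf.\ \eqref{quadr}) retains the term $(\widehat H_N^+ v_2^+ - \widehat H_2^+ v_N^+)[H_\tau]$, and at the $\partial_2$-level this becomes $R\,[\partial_2 H_\tau]$ with $R=(\widehat H_N^+\partial_2 v_2^+ - \widehat H_2^+\partial_2 v_N^+)|_{x_1=0}$. The dangerous piece is $[\partial_1\widehat H_\tau]\,\widehat H_N^+\,\partial_2 v_2^+\,\partial_2\varphi$: it pairs the first-order trace $\partial_2 v_2^+|_{x_1=0}$ with $\partial_2\varphi$, and neither integration by parts in $x_2$ (which produces $\partial_2^2\varphi$) nor a trace inequality (which costs an extra normal derivative) closes at the required regularity. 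The paper handles this by a device you do not mention: restricting the interior equation for $H^+$ to the boundary and using \eqref{b50b.4} yields the identity \eqref{eqH}, namely $R=-\partial_0 H_N^+ + \mbox{l.o.t.}+F_N^+$. The bad term then becomes ${\rm coeff}\cdot\partial_0 H_N^+\,\partial_2\varphi$, which after integration by parts in $\partial_0$ and another use of \eqref{b50b.4} reduces to $H_N^+\partial_2 v_N^+|_{x_1=0}$; this is passed to a volume integral and absorbed via \eqref{d1U} and \eqref{b43b}. This step is why the Rayleigh--Taylor coefficient in \eqref{IntQ2} is exactly $[\partial_1\hat p]$, with no magnetic correction.

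A secondary structural issue: the paper does \emph{not} first close an $L^2$ estimate and then upgrade. It first proves the normal-derivative bound \eqref{d1U} (via the weighted operator $\sigma\partial_1$ together with resolving $\partial_1 V^\pm$ near the boundary from \eqref{bm_2d"} and \eqref{b43b}), and only then runs the $\partial_0$- and $\partial_2$-energy arguments, closing everything simultaneously in $J(t)=I(t)+\|\varphi(t)\|_{L^2}^2+\|\partial_2\varphi(t)\|_{L^2}^2$ by Gronwall. In particular the $L^2$ control of $\varphi$ comes from \eqref{b50b.4} and \eqref{d1U} (see \eqref{frontL2}), \emph{not} from the Rayleigh--Taylor condition; the latter is used only at the $\partial_2$-level to control $\|\partial_2\varphi(t)\|_{L^2}$. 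Your proposed ordering would require trace bounds for $v_2^+|_{x_1=0}$ before any normal-derivative information is available, which does not close. The commutator treatment you sketch (Hardy-type argument for $[\partial_\sigma,\widehat{\mathcal A}_0^\pm]\partial_1$) is also not what the paper does; those terms are simply absorbed through \eqref{d1U}.
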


That is, it remains to prove Lemma \ref{lem1} and the rest of this section will be devoted to this proof. In what follows we assume by default that the assumptions of Lemma \ref{lem1} are satisfied.

\subsection{Estimate of normal derivatives through tangential ones} In spite of the fact that for problem \eqref{b48b}--\eqref{b51b} the boundary $x_1=0$ is characteristic, using the structure of the boundary matrix (see \eqref{bm_2d}--\eqref{bm_2d^}) and the divergence constraints \eqref{b43b}, we can estimate the $L^2-$norms of the normal derivatives $\partial_1U^{\pm}$ through the $L^2-$norms of $U^\pm$ and the tangential derivatives $\partial_tU^{\pm}$ and $\partial_2U^\pm $ (and the $H^1-$ norms of the source terms $F^\pm$).

\begin{proposition}
The solutions to problem \eqref{b48b}--\eqref{b51b} obey the estimate
\begin{equation}
\|\partial_1{U}^\pm\|^2_{L^2(\Omega_t)}  \leq C\left\{ \|F^\pm \|^2_{H^1(\Omega_T)} +\int_0^tI^\pm (s)ds \right\}
\label{d1U}
\end{equation}
for all $t\leq T$ and $C=C(K,\bar{\rho}_0,\bar{p}_0,\bar{\kappa}_0,T)>0$ being a constant independent of the source terms $F^\pm$, where $\Omega_t= (-\infty ,t]\times\mathbb{R}^2_+$ and
\begin{equation}
I^\pm (t) =\|U^\pm (t)\|^2_{L^2(\mathbb{R}^2_+)}+
\|\partial_t U^\pm (t)\|^2_{L^2(\mathbb{R}^2_+)}+ \|\partial_2U^\pm (t)\|^2_{L^2(\mathbb{R}^2_+)}
\label{Ipm}
\end{equation}
(clearly, $
\int_0^tI^\pm (s)ds =
\|U^\pm\|^2_{L^2(\Omega_t)}+
\|\partial_t U^\pm\|^2_{L^2(\Omega_t)}+ \|\partial_2U^\pm\|^2_{L^2(\Omega_t)}$).
\label{p5}
\end{proposition}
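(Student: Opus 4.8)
The plan is to exploit the block structure of the boundary matrix $\widetilde{A}_1(\widehat{U}^{\pm},\widehat{\Psi}^{\pm})$ displayed in \eqref{bm_2d}--\eqref{bm_2d"}. The point is that although the boundary is characteristic, the "lost" normal derivatives are exactly those of the noncharacteristic part of $\dot W^{\pm}$, and they can be recovered from the equations themselves. Concretely, after passing to the variables $\dot W^{\pm}$ (equivalently working directly with $U^{\pm}$), the system \eqref{b48b} can be split into the equations that do and do not involve $\partial_1$ with a nonvanishing coefficient near $x_1=0$. In the $\dot W$-variables the coefficient of $\partial_1$ is $\widehat{\mathcal B}_1^{\pm}$, which has rank $5$ (three $\pm$ pairs plus the coupling block with $-\widehat H_N^{\pm}a_0^{\pm}$, where $|\widehat H_N^{\pm}|\ge\bar\kappa_0>0$ by \eqref{cdass} and $a_0^{\pm}>0$), so one can algebraically solve for $\partial_1\dot q^{\pm}$, $\partial_1\dot v_2^{\pm}$, $\partial_1\dot H_2^{\pm}$ and $\partial_1$ of the remaining noncharacteristic components, each expressed as a linear combination of $U^{\pm}$, $\partial_tU^{\pm}$, $\partial_2U^{\pm}$ and $F^{\pm}$, with coefficients bounded in terms of $K,\bar\rho_0,\bar p_0,\bar\kappa_0$ and the geometry.

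First I would make precise the partition of the six unknowns $\dot W^{\pm}=(\dot q^{\pm},\dot v_N^{\pm},\dot v_2^{\pm},\dot H_N^{\pm},\dot H_2^{\pm},\dot S^{\pm})$ into the "characteristic" group $(\dot v_N^{\pm},\dot H_N^{\pm},\dot S^{\pm})$, whose $\partial_1$-derivatives the boundary matrix annihilates in the relevant directions, and the "noncharacteristic" group $(\dot q^{\pm},\dot v_2^{\pm},\dot H_2^{\pm})$. The equations in \eqref{b48b} whose test direction lies in the range of $\widehat{\mathcal B}_1^{\pm}$ give an invertible $5\times5$ (or, after accounting for the structure, effectively block-diagonal) relation that yields
\[
\|\partial_1 W_{\mathrm{nc}}^{\pm}\|_{L^2(\Omega_t)}^2\le C\Big\{\|F^{\pm}\|_{H^1(\Omega_T)}^2+\int_0^t I^{\pm}(s)\,ds\Big\},
\]
where $W_{\mathrm{nc}}^{\pm}$ denotes the noncharacteristic components. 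For the characteristic components $\partial_1\dot v_N^{\pm}$, $\partial_1\dot H_N^{\pm}$ one uses the divergence constraint \eqref{b43b}: writing out ${\rm div}\,h^{\pm}=F_7^{\pm}$ gives $\partial_1\dot H_N^{\pm}$ directly in terms of $\partial_2$-derivatives, $U^{\pm}$ and $F_7^{\pm}$, and then the $H$-equation (the 2D analog of \eqref{aA3}) together with the already-controlled $\partial_1\dot H_2^{\pm}$ lets one solve for $\partial_1\dot v_N^{\pm}$, exactly as in the proof of Proposition \ref{p2}; here one invokes \eqref{f7"} to bound $\|F_7^{\pm}(t)\|_{L^2}$ by $\|F^{\pm}\|_{H^1}$. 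The entropy component $\partial_1\dot S^{\pm}$ comes for free from the (scalar) transport equation for $S$ contained in \eqref{b48b}, where the coefficient of $\partial_1$ is $w_1^{\pm}/\partial_1\widehat\Phi^{\pm}$; this does vanish on $x_1=0$, but away from the boundary it is only a zero-order factor, and in any case $\partial_1\dot S^{\pm}$ can equally be extracted once $\partial_1$ of the other five components is known by reading the $S$-equation, or simply folded into $W_{\mathrm{nc}}$.

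Having all components of $\partial_1U^{\pm}$ expressed pointwise as $C\big(|U^{\pm}|+|\partial_tU^{\pm}|+|\partial_2U^{\pm}|+|F^{\pm}|+|\nabla F^{\pm}|\big)$ with constants controlled by \eqref{a22}, \eqref{a5}, \eqref{cdass}, I would square, integrate over $\Omega_t$, and use that $\|F^{\pm}\|_{L^2(\Omega_T)}^2+\|\nabla F^{\pm}\|_{L^2(\Omega_T)}^2\le\|F^{\pm}\|_{H^1(\Omega_T)}^2$ together with the definition \eqref{Ipm} of $I^{\pm}$ to obtain \eqref{d1U}; the change of variables $U^{\pm}=\widehat J^{\pm}\dot W^{\pm}$ with $\det\widehat J^{\pm}\neq 0$ and $\widehat J^{\pm}\in W^2_\infty$ transfers everything back to $U^{\pm}$ with no loss. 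The only genuinely delicate point is the recovery of the characteristic normal derivatives $\partial_1\dot v_N^{\pm}$: one must be careful that the argument uses the constraint \eqref{b43b} and the magnetic equation in a way that does not secretly require boundary information or a normal derivative one has not yet controlled; this is precisely the mechanism of Proposition \ref{p2}, and the condition $|\widehat H_N^{\pm}|\ge\bar\kappa_0>0$ is what makes it work. Everything else is bookkeeping with bounded coefficients, and in particular condition \eqref{RTL} is not needed here — it enters only later in the energy estimate, not in this purely algebraic estimate of normal derivatives through tangential ones.
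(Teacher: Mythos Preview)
Your overall strategy --- use the block structure \eqref{bm_2d}--\eqref{bm_2d"} together with the divergence constraint \eqref{b43b} to express normal derivatives through tangential ones --- is the right idea, and it is exactly what the paper does \emph{near the boundary}. But there are two genuine gaps.

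\textbf{The inversion is only local.} Assumption \eqref{cdass} gives $|\widehat H_N^\pm|\ge\bar\kappa_0$ \emph{only at} $x_1=0$; away from the boundary $\widehat H_N^\pm$ may vanish, and then the block of $\widehat{\mathcal B}_1^\pm$ involving $-\widehat H_N^\pm a_0^\pm$ degenerates and your algebraic solve breaks down. So you cannot get a pointwise bound $|\partial_1U^\pm|\le C(|U^\pm|+|\partial_t U^\pm|+|\partial_2U^\pm|+|F^\pm|+|\nabla F^\pm|)$ on all of $\Omega_t$ with a constant depending on $\bar\kappa_0$. The paper fixes this by a separate estimate for the \emph{weighted} derivative $\sigma\partial_1U^\pm$, where $\sigma$ vanishes at $x_1=0$ (so no boundary term arises in the energy identity) and is bounded below away from the boundary; the algebraic argument you describe is then used only in a strip $\Omega_t^\delta$ where $|\widehat H_N^\pm|\ge\bar\kappa_0/2$ by continuity, and the two are patched together.

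\textbf{The entropy derivative cannot be extracted algebraically.} The $S$-equation in \eqref{b48b} is a scalar transport equation of the form $\partial_tS^\pm+\frac{\hat w_1^\pm}{\partial_1\widehat\Phi^\pm}\partial_1S^\pm+\cdots=F_6^\pm$ with $\hat w_1^\pm|_{x_1=0}=0$; it does not couple to $\partial_1$ of the other components, so knowing $\partial_1$ of the remaining five does not let you ``read off'' $\partial_1S^\pm$, and $S$ certainly cannot be ``folded into $W_{\mathrm{nc}}$'' since the last row and column of $\widehat{\mathcal B}_1^\pm$ are zero. The paper instead applies $\partial_1$ to the transport equation for $S^\pm$, obtains a transport equation for $\partial_1S^\pm$ (which needs no boundary condition, again because $\hat w_1^\pm|_{x_1=0}=0$), and closes with a standard energy estimate using the already proved bound on $\partial_1V^\pm$. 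Your remark that \eqref{RTL} is not used here is correct.
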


\begin{proof}
First of all, for any linear symmetric hyperbolic system in the half-plane $\mathbb{R}^2_+$ we can easily get an estimate for the weighted normal derivative $\sigma\partial_1$ of the unknown,  where the weight $\sigma (x_1)\in C^{\infty}(\mathbb{R}_+)$ is a monotone increasing function such that $\sigma (x_1)=x_1$ in a neighborhood of the origin and $\sigma (x_1)=1$ for $x_1$ large enough. Indeed, to estimate the weighted normal derivatives of $U^\pm$ we do not need boundary conditions because the weight $\sigma |_{x_1=0}=0$. By applying to systems \eqref{b48b} the operator $\sigma\partial_1$ and using standard arguments of the energy method, we obtain the inequality
\begin{equation}
\|\sigma\partial_1U^{\pm} (t)\|^2_{L^2(\mathbb{R}^2_+)}\leq C\biggl\{ \|F^\pm \|^2_{H^1(\Omega_T)} + \int_0^{\tau}I^\pm (s)ds  +\int_0^{t}\|\sigma\partial_1U^{\pm} (s)\|^2_{L^2(\mathbb{R}^2_+)}ds\biggr\}
\label{4.92}
\end{equation}
for all $t\leq \tau \leq T$. Applying then Gronwall's lemma to the function of $t$ staying in the left-hand side of \eqref{4.92}, we get
\[
\|\sigma\partial_1U^{\pm} (t)\|^2_{L^2(\mathbb{R}^2_+)}\leq C\biggl\{ \|F^\pm \|^2_{H^1(\Omega_T)} + \int_0^{\tau}I^\pm (s)ds\biggr\}.
\]
Integrating the last inequality over the time interval $(0,t )$ and setting $\tau =t$ in the result, we come to the estimate
\begin{equation}
\|\sigma\partial_1{U}^\pm\|^2_{L^2(\Omega_t)}  \leq C\left\{ \|F^\pm \|^2_{H^1(\Omega_T)} +\int_0^tI^\pm (s)ds \right\}
\label{sigma_d1U}
\end{equation}
for all $t\leq T$ and with a constant $C=C(K,\bar{\rho}_0,\bar{p}_0,T)>0$.

Taking into account the structure of the boundary matrices $\widetilde{A}_1(\widehat{U}^\pm , \widehat{\Psi}^\pm )$ detailed in \eqref{bm_2d}--\eqref{bm_2d^}, we can estimate $\partial_1U^{\pm}$ in a neighborhood of the boundary. Indeed, in view of assumption \eqref{cdass} and the continuity of the basic state \eqref{a21}, there exist such a small but fixed constant $\delta >0$ depending on $\bar{\kappa}_0$ that
\begin{equation}
|\widehat{H}_N^{\pm}|\geq \frac{\bar{\kappa}_0}{2}>0 \qquad\mbox{in}\ \Omega_t^{\delta},
\label{cdass"}
\end{equation}
where
\[
\Omega_t^{\delta}=(-\infty ,t]\times \mathbb{R}^2_{\delta},\quad \mathbb{R}^2_{\delta} =(0,\delta )\times\mathbb{R}.
\]
Rewriting systems \eqref{b48b} in terms of the vectors
\[
{W}^{\pm}= ({q}^{\pm},{v}^{\pm}_N,{v}_2^{\pm},{H}^{\pm}_N,{H}^{\pm}_2,{S}^{\pm})
\]
(with ${q}^{\pm}={p}^{\pm}+\widehat{H}_1^{\pm}{H}_1^{\pm}+\widehat{H}_2^{\pm}{H}_2^{\pm}$, see \eqref{bm_2d^}), using the divergence constraints \eqref{b43b} and taking into account the structure of the matrices $\widehat{\mathcal{B}}_1^\pm$ in \eqref{bm_2d"}, by virtue of \eqref{cdass"}, we can resolve the rewritten systems for the normal derivatives of $(q^\pm ,v_N^\pm ,v_2^\pm ,H_N^\pm,H_2^\pm )$  in $\Omega_t^{\delta}$. By returning to the original unknowns and applying \eqref{f7"} this gives the estimates
\begin{multline}
\| \partial_1V^\pm (t)\|^2_{L^2(\mathbb{R}^2_{\delta})}
\leq C \left\{ \|F^\pm (t)\|^2_{L^2(\mathbb{R}^2_+)} + \|F_7^\pm (t)\|^2_{L^2(\mathbb{R}^2_+)}+ I^\pm (t)\right\}
\\
\leq C \left\{ \|F^\pm \|^2_{H^1(\Omega_T)} + I^\pm (t)\right\}
\label{1Vt}
\end{multline}
and
\begin{equation}
\| \partial_1V^\pm \|^2_{L^2(\Omega_t^{\delta})} \leq C \left\{ \|F^\pm \|^2_{H^1(\Omega_T)} + \int_0^t I^\pm (s)ds\right\}
\label{1V}
\end{equation}
with a constant $C=C(K,\bar{\kappa}_0,T)>0$, where
\[
V^\pm = (p^\pm ,v^\pm ,H^\pm).
\]
While deriving \eqref{1Vt} we exploited the elementary inequality
\begin{equation}
\| F^\pm (t)\|^2_{L^2(\mathbb{R}^2_+)} \leq \| F^\pm \|^2_{L^2(\Omega_t)} +\| \partial_tF^\pm \|^2_{L^2(\Omega_t)}
\label{elem}
\end{equation}
following from the trivial relation
\[
\frac{d}{dt}\,\|F^\pm (t)\|^2_{L_2(\mathbb{R}^2_+)}=2\int_{\mathbb{R}^2_+}(F^\pm,\partial_tF^\pm)dx
\]
(clearly, \eqref{elem} is roughened as $\| F^\pm (t)\|^2_{L^2(\mathbb{R}^2_+)} \leq \| F^\pm \|^2_{H^1(\Omega_T)}$).

Since the weight $\sigma$ in \eqref{sigma_d1U} is not zero outside the boundary, it follows from estimate \eqref{sigma_d1U} that
\begin{equation}
\|\partial_1{U}^\pm\|^2_{L^2(\Omega_t \setminus \Omega_t^{\delta})}  \leq C\left\{ \|F^\pm \|^2_{H^1(\Omega_T)} +\int_0^tI^\pm (s)ds \right\},
\label{d1U_out}
\end{equation}
where the constant $C$ depends, in particular, on $\delta$ and so on $\bar{\kappa}_0$. Combining \eqref{1V} and \eqref{d1U_out}, we get
\begin{equation}
\|\partial_1{V}^\pm\|^2_{L^2(\Omega_t )}  \leq C\left\{ \|F^\pm \|^2_{H^1(\Omega_T)} +\int_0^tI^\pm (s)ds \right\}.
\label{d1V}
\end{equation}

For obtaining the desired estimate \eqref{d1U} it remains to include the normal derivative of $S^\pm$ into estimate \eqref{d1V}. To this end we consider the last equations in systems \eqref{b48b}:
\begin{equation}
\partial_tS^{\pm}+\frac{1}{\partial_1\widehat{\Phi}^{\pm}} (\hat{w}^{\pm} ,\nabla S^{\pm} ) + \mbox{l.o.t.} =F_6^\pm \quad\mbox{in}\ \Omega_T,
\label{entropy}
\end{equation}
where l.o.t are unimportant lower-order terms. Since $\hat{w}_1^\pm |_{x_1=0}=0$, the linear equations \eqref{entropy} considered as equations for $S^\pm$ do not need any boundary conditions. Writing  equations similar to \eqref{entropy} for $\partial_1 S^{\pm}$ and using \eqref{d1V}  to estimate the normal derivatives of $V^{\pm}$ that appear in the right-hand side of the equations for $\partial_1 S^{\pm}$,  we easily derive the estimate
\begin{equation}
\|\partial_1S^\pm\|^2_{L^2(\Omega_t )}  \leq C\left\{ \|F^\pm \|^2_{H^1(\Omega_T)} +\int_0^tI^\pm (s)ds \right\}.
\label{ent_est}
\end{equation}
Estimates \eqref{d1V} and \eqref{ent_est} give \eqref{d1U}.
\end{proof}

Note that below we will need the ``local'' layerwise estimate \eqref{1Vt} together with \eqref{d1U} for obtaining the a priori estimate \eqref{main_est'}.

\subsection{Preparatory estimation of material derivatives}
We apply the differential operator $\partial_0$ (see \eqref{d0}) to systems \eqref{b48b}. Then, by standard arguments of the energy method applied to the resulting symmetric hyperbolic systems (considered as systems for $\partial_0U^\pm $ with lower-order terms depending on $U^\pm$, $\partial_tU^\pm$, $\partial_1U^\pm$ and $\partial_2U^\pm$), in view of \eqref{d1U},  we obtain
\begin{multline}
\sum_{\pm}\int_{\mathbb{R}^2_+}\bigl(A_0(\widehat{U}^\pm )\partial_0U^\pm , \partial_0U^\pm \bigr)(t)\,dx +2 \int_{\partial\Omega_t}Q_0dx_2ds \\
\leq C\biggl\{ \sum_{\pm}\|F^\pm \|^2_{H^1(\Omega_T)} +\int_0^tI (s)ds \biggr\},
\label{q0}
\end{multline}
where $I(t)=I^+(t)+I^-(t)$, with $I^\pm (t)$ defined in \eqref{Ipm}, and
\[
Q_0 =-\frac{1}{2}\sum_{\pm}\bigl(\widetilde{A}_1(\widehat{U}^\pm, \widehat\Psi^{\pm} )\partial_0U^\pm , \partial_0U^\pm \bigr)\bigr|_{x_1=0}\,.
\]

Using \eqref{A1tilde2D} and the boundary conditions \eqref{b50b.2}, we first calculate
\begin{multline}
-\frac{1}{2}\sum_{\pm}\bigl(\widetilde{A}_1(\widehat{U}^\pm, \widehat\Psi^{\pm} )U^\pm , U^\pm \bigr)\bigr|_{x_1=0}\\=\left.\left(-v_N^+[p]+
(\widehat{H}_1^+v_2^+-\widehat{H}_2^+v_1^+)[H_{\tau}]\right)\right|_{x_1=0}\\
=\left.\left(-v_N^+[p]+
(\widehat{H}_N^+v_2^+-\widehat{H}_2^+v_N^+)[H_{\tau}]\right)\right|_{x_1=0}.
\label{quadr}
\end{multline}
Then
\begin{multline*}
Q_0=
\left.\left(-\partial_0v_N^+[\partial_0p]+
(\widehat{H}_N^+\partial_0v_2^+-\widehat{H}_2^+\partial_0v_N^+)[\partial_0H_{\tau}]\right)\right|_{x_1=0}  +
\big({\rm coeff}\,v_2^+[\partial_0p]  \\ +{\rm coeff}\,v_2^+ [\partial_0H_{\tau}] + {\rm coeff}\,[H_1] \partial_0v_2^+ + {\rm coeff}\,[H_1] \partial_0v_N^+ + {\rm coeff}\,v_2^+[H_1]\big)\big|_{x_1=0}.
\end{multline*}
Here and later on \,coeff\, denotes a coefficient that can change from line to line, is determined by the basic state, and its concrete form is of no interest.
Taking into account the boundary conditions \eqref{b50b.1}, \eqref{b50b.3}, \eqref{b50b.4}, we obtain
\begin{equation}
\partial_0[p] = {\rm coeff}\,\partial_0\varphi + {\rm coeff}\,\varphi \\= {\rm coeff}\,v_N^+|_{x_1=0} + {\rm coeff}\,\varphi ,
\label{d0q}
\end{equation}
\begin{equation}
\partial_0[H_{\tau}] = {\rm coeff}\,v_N^+|_{x_1=0} + {\rm coeff}\,\varphi .
\label{d0H2}
\end{equation}
By substituting \eqref{d0q} and \eqref{d0H2} into the quadratic form $Q_0$, one gets
\begin{multline}
Q_0=\bigl({\rm coeff}\,v_N^+\partial_0v_N^+ +{\rm coeff}\,v_N^+\partial_0v_2^+ +{\rm coeff}\,\varphi \partial_0v_N^+ +{\rm coeff}\,\varphi \partial_0v_2^+ \\
+{\rm coeff}\,v_2^+v_N^+ + {\rm coeff}\,v_2^+\varphi + {\rm coeff}\,[H_1] \partial_0v_2^+ \\+ {\rm coeff}\,[H_1] \partial_0v_N^+ +{\rm coeff}\,v_2^+[H_1]
\bigr)\bigr|_{x_1=0}\,.
\label{Q_0}
\end{multline}

To treat the integrals of the ``lower-order'' terms like $\,{\rm coeff} v_N^+\partial_{\alpha}v_2^+|_{x_1=0}$ contained in the right-hand side of \eqref{Q_0}, where  $\alpha =2$ or we have the time derivative $\partial_t$, we use the same standard arguments as in \cite{T05,T09,Tcpam}. That is, we pass to the volume integral and integrate by parts:
\begin{multline}
\int_{\partial\Omega_t}\hat{c}\,{v}_N^+\partial_{\alpha}{v}_2^+|_{x_1=0}\,dx_2ds
=-\int_{\Omega_t}\partial_1\bigl(\tilde{c}{v}_N^+\partial_{\alpha}{v}_2^+\bigr)dxds \\
=\int_{\Omega_t}\Bigl\{\tilde{c}\partial_{\alpha}{v}_N^+\partial_1{v}_2^+ +(\partial_{\alpha}\tilde{c}){v}_N^+\partial_1{v}_2^+ -\tilde{c}\partial_1{v}_N^+\partial_{\alpha}{v}_2^+\\-(\partial_1\tilde{c}){v}_N^+\partial_\alpha{v}_2^+\Bigr\}dxds
-\int_{\Omega_t}\partial_{\alpha}\bigl(\tilde{c}{v}_N^+\partial_1{v}_2^+\bigr)dxds,
\label{intbypart}
\end{multline}
where $\hat{c}$ is a coefficient and $\tilde{c}|_{x_1=0}=\hat{c}$.

If $\alpha =2$ the last integral in \eqref{intbypart} is equal to zero. But if $\partial_{\alpha}$ denotes the time derivative, the last integral does not disappear. In this case we use a cut-off in the passage to the volume integral. That is, we may assume that the coefficient $\tilde{c}$ appearing in the volume integrals in \eqref{intbypart} is zero for $x_1>\delta $, where $\delta$ is the same as in \eqref{cdass"}. For example, if $\hat{c}=\widehat{H}_N^+|_{x_1=0}$, then we take $\tilde{c} = \eta\widehat{H}_N^+$, where  $\eta (x_1)\in C^{\infty}(\mathbb{R}_+)$ is such a rapidly decreasing function that $\eta (0)=1$ and $\eta (x_1)=0$ for $x_1>\delta$.\footnote{If $\hat{c}=1$, we just take $\tilde{c} = \eta$.} Then, all the integrals over $\Omega_t$ in \eqref{intbypart} are replaced by the same integrals over $\Omega^{\delta}_t$ and the last integral reads:
\begin{equation}
-\int_{\Omega_t^{\delta}}\partial_s\bigl(\tilde{c}{v}_N^+\partial_1{v}_2^+\bigr)dxds=
-\int_{\mathbb{R}^2_{\delta}}\left(\tilde{c}{v}_N^+\partial_1{v}_2^+\right)(t)\,dx.
\label{cut-off}
\end{equation}
Using the Young inequality, the elementary inequality (see \eqref{elem})
\begin{equation}
\| U^\pm (t)\|^2_{L^2(\mathbb{R}^2_+)} \leq \int_0^tI (s)ds
\label{elem'}
\end{equation}
and \eqref{1Vt}, we estimate the last integral as follows:
\begin{equation}
-\int_{\mathbb{R}^2_{\delta}}\tilde{c}{v}_N^+\partial_{1}{v}_2^+\,dx\leq C\biggl\{
\sum_\pm\|F^\pm\|^2_{H^1(\Omega_T)}+\tilde{\varepsilon}\,I(t) +\frac{1}{\tilde{\varepsilon}}\,
\int_0^tI (s)ds \biggr\},
\label{intbypart1}
\end{equation}
where $\tilde{\varepsilon} $ is a small positive constant. The last but one integral in \eqref{intbypart} is estimated by using \eqref{d1U}:
\begin{multline}
\int_{\Omega_t}\Bigl\{\tilde{c}\partial_{\alpha}{v}_N^+\partial_1{v}_2^+ +(\partial_{\alpha}\tilde{c}){v}_N^+\partial_1{v}_2^+ -\tilde{c}\partial_1{v}_N^+\partial_{\alpha}{v}_2^+ -(\partial_1\tilde{c}){v}_N^+\partial_\alpha{v}_2^+\Bigr\}dxds \\
\leq C\biggl\{ \sum_\pm \|F^\pm \|^2_{H^1(\Omega_T)} +\int_0^tI (s)ds \biggr\}.
\label{intbypart2}
\end{multline}

The integrals of the terms like  $\,{\rm coeff} \varphi\partial_0 v_N^+|_{x_1=0}$ contained in \eqref{Q_0} are treated by the integration by parts, using the Young inequality, the cut-off argument as above (when we pass to the volume integral), estimates \eqref{d1U} and \eqref{1Vt} and the estimate
\begin{equation}
\|\varphi (t)\|^2_{L^2(\mathbb{R})}\leq C\left\{ \sum_\pm \|F^\pm \|^2_{H^1(\Omega_T)} +\int_0^tI (s)ds+ \|\varphi \|^2_{L^2(\partial\Omega_t)} \right\}
\label{frontL2}
\end{equation}
following from the boundary condition \eqref{b50b.4} and estimate \eqref{d1U}:
\begin{multline}
\int_{\partial\Omega_t}{\rm coeff}\, \varphi\partial_0v_N^+|_{x_1=0}\, dx_2ds=\int_{\mathbb{R}}{\rm coeff}\, \varphi v_N^+|_{x_1=0}\, dx_2 \\ +
\int_{\partial\Omega_t}\left.\left({\rm coeff}\, \varphi v_N^+ + {\rm coeff}\, v_N^+\partial_0\varphi \right)\right|_{x_1=0} dx_2ds   =
\int_{\mathbb{R}}{\rm coeff}\, \varphi v_N^+|_{x_1=0}\, dx_2 \\  +
\int_{\partial\Omega_t}\left.\left({\rm coeff}\, \varphi v_N^+ + {\rm coeff}\, (v_N^+)^2 \right)\right|_{x_1=0} dx_2ds \\ \leq C\biggl\{ \frac{1}{\tilde{\varepsilon}}\,\|\varphi (t)\|^2_{L^2(\mathbb{R})}   + \tilde{\varepsilon}\,\| v_N^+(t) \|^2_{L^2(\mathbb{R}^2_+)} + \tilde{\varepsilon}\,\| \partial_1v_N^+(t) \|^2_{L^2(\mathbb{R}^2_{\delta})}\\ +\|\varphi \|^2_{L^2(\partial\Omega_t)}  +\| v_N^+ \|^2_{L^2(\Omega_t)} + \| \partial_1v_N^+ \|^2_{L^2(\Omega_t)}\biggr\} \\ \leq  \tilde{\varepsilon}C I(t)+\widetilde{C}(\tilde{\varepsilon})\biggl\{\sum_\pm \|F^\pm \|^2_{H^1(\Omega_T)} +\int_0^tI (s)ds+ \|\varphi \|^2_{L^2(\partial\Omega_t)} \biggr\}.
\label{intbypart3}
\end{multline}
Here and below $\widetilde{C}=\widetilde{C}(\tilde{\varepsilon})$ is a positive constant depending on $\tilde{\varepsilon} $.
In \eqref{intbypart3} we also used the boundary condition \eqref{b50b.4} for expressing $\partial_0\varphi$ through $v_N^+|_{x_1=0}$ and $\varphi$.

Thus, taking into account the above, we estimate the quadratic form $Q_0$ as follows:
\begin{equation}
Q_0\leq   \tilde{\varepsilon}C I(t)+\widetilde{C}(\tilde{\varepsilon})\biggl\{\sum_\pm \|F^\pm \|^2_{H^1(\Omega_T)}  +\int_0^tI (s)ds+ \|\varphi \|^2_{L^2(\partial\Omega_t)} \biggr\}.
\label{estQ0}
\end{equation}
Then, taking into account the positive definiteness of the symmetric matrices $A_0(\widehat{U}^\pm )$, from \eqref{q0}, \eqref{elem'}, \eqref{frontL2} and \eqref{estQ0} we obtain
\begin{multline}
\sum_\pm \Bigl\{ \| U^\pm (t)\|^2_{L^2(\mathbb{R}^2_+)} +\| \partial_0U^\pm (t)\|^2_{L^2(\mathbb{R}^2_+)} \Bigr\} + \|\varphi (t)\|^2_{L^2(\mathbb{R})}\\
\leq   \tilde{\varepsilon}C\sum_\pm\Bigl(\| \partial_tU^\pm (t)\|^2_{L^2(\mathbb{R}^2_+)} +\|\partial_2 U^\pm (t)\|^2_{L^2(\mathbb{R}^2_+)}\Bigr)\\
+\widetilde{C}(\tilde{\varepsilon})\left\{\sum_\pm \|F^\pm \|^2_{H^1(\Omega_T)}   +\int_0^tI (s)ds+ \|\varphi \|^2_{L^2(\partial\Omega_t)} \right\}.
\label{estd0}
\end{multline}

\subsection{Estimation of $x_2$-derivatives: deduction of the main energy inequality}
We now differentiate systems \eqref{b48b} with respect to $x_2$. Then, similarly to \eqref{q0} by the energy method  we get
\begin{multline}
\sum_{\pm}\int_{\mathbb{R}^2_+}\bigl(A_0(\widehat{U}^\pm )\partial_2U^\pm , \partial_2U^\pm \bigr)dx +2 \int_{\partial\Omega_t}Q_2dx_2ds \\
\leq C\biggl\{ \sum_{\pm}\|F^\pm \|^2_{H^1(\Omega_T)} +\int_0^tI (s)ds \biggr\},
\label{q2}
\end{multline}
where, cf. \eqref{quadr},
\[
Q_2 =-\frac{1}{2}\sum_{\pm}\bigl(\widetilde{A}_1(\widehat{U}^\pm, \widehat\Psi^{\pm} )\partial_2U^\pm , \partial_2U^\pm \bigr)\bigr|_{x_1=0} \\
=\left.\left(-\partial_2v_N^+[\partial_2p]+
R[\partial_2H_{\tau}]\right)\right|_{x_1=0} +\mathcal{Q}_2
\]
and
\[
R=
(\widehat{H}_N^+\partial_2v_2^+-\widehat{H}_2^+\partial_2v_N^+)|_{x_1=0},
\]
\[
\mathcal{Q}_2 = \big({\rm coeff}\,v_2^+[\partial_2p] +{\rm coeff}\,v_2^+ [\partial_2H_{\tau}] + {\rm coeff}\,[H_1] \partial_2v_2^+  + {\rm coeff}\,[H_1] \partial_2v_N^+ + {\rm coeff}\,v_2^+ [H_1]\big)\big|_{x_1=0}.
\]

To treat the quadratic form $Q_2$ we use not only the boundary conditions but also the interior equations considered on the boundary. Namely, by multiplying the equations for $H^+$ contained in \eqref{b48b} (see their 3D counterpart for $\dot{H} ^\pm$ in \eqref{aA3}) by the vector $(1,-\partial_2\widehat{\Psi}^+)$, considering the result at $x_1=0$ and taking \eqref{b50b.4} into account, we obtain
\begin{equation}
R = -\partial_0H_N^+  +{\rm coeff}\,v_N^+ +{\rm coeff}\,v_2^+ +{\rm coeff}\,H_N^+ +F_N^+\qquad \mbox{on}\ \partial\Omega_T,
\label{eqH}
\end{equation}
with $F_N^+=F_4^+-F_5^+\partial_2\widehat{\Psi}^+$.
Using \eqref{eqH} (as well as the definition of $R$ itself) and the boundary conditions \eqref{b50b.3} and \eqref{b50b.4}, we calculate:
\begin{multline}
Q_2 = [\partial_1\hat{p}]\partial_2\varphi \,\partial_2v_N^+-[\partial_1\widehat{H}_{\tau}]R\partial_2\varphi -[\partial_2\partial_1\widehat{H}_{\tau}]\varphi\,\partial_2v_2^+ \\
+ \big([\partial_2\partial_1\hat{p}]+\widehat{H}_2^+[\partial_2\partial_1\widehat{H}_{\tau}]\big)\varphi\,\partial_2v_N^+ +\mathcal{Q}_2 \\[3pt]
=\underbrace{[\partial_1\hat{p}]\,\partial_t\partial_2\varphi\, \partial_2\varphi}  + \underline{{\rm coeff}\,\partial_0H_N^+\,\partial_2\varphi|_{x_1=0} }  \\ + \Bigl( {\rm coeff}\,\partial_2^2\varphi\, \partial_2\varphi +
{\rm coeff}\,( \partial_2\varphi )^2+{\rm coeff}\,\varphi\, \partial_2\varphi +{\rm coeff}\,v_N^+ \partial_2\varphi \\+{\rm coeff}\,v_2^+\partial_2\varphi +{\rm coeff}\,H_N^+\partial_2\varphi +{\rm coeff}\,\varphi\, \partial_2v_N^+  + {\rm coeff}\,\varphi \,\partial_2v_2^+ \\
+{\rm coeff}\,v_2^+\varphi + {\rm coeff}\,[H_1] \partial_2v_2^+ + {\rm coeff}\,[H_1] \partial_2v_N^+
 \\+ {\rm coeff}\,v_2^+ [H_1]+ {\rm coeff}\,F_N^+\partial_2\varphi +{\rm coeff}\, F_N^+ \varphi\Bigr)\Bigr|_{x_1=0}.
\label{Q2"}
\end{multline}

The underbraced term in \eqref{Q2"} is {\it  most important}  because under the Rayleigh-Taylor sign condition \eqref{RTL} it gives us the control on the $L^2$ norm of $\partial_2\varphi$ (see below). Having in hand this control, only the underlined term in \eqref{Q2"} needs an additional care whereas the rest terms in \eqref{Q2"} can be easily treated by the integration by parts, etc. Indeed,
\begin{multline}
2 \int_{\partial\Omega_t}Q_2dx_2ds=\int_{\mathbb{R}}[\partial_1\hat{p}](t)(\partial_2\varphi )^2(t)\,dx_2  -K(t) -M(t) \\
\geq \epsilon \, \|\partial_2 \varphi (t)\|^2_{L^2(\mathbb{R})}-K(t) -M(t),
\label{IntQ2}
\end{multline}
where
\[
K(t)=\int_{\partial\Omega_t}{\rm coeff}\,\partial_0H_N^+\,\partial_2\varphi|_{x_1=0}dx_2ds
\]
and
\begin{equation}
M(t)\leq C\biggl\{ \sum_\pm \|F^\pm \|^2_{H^1(\Omega_T)}   +\int_0^tI (s)ds \\+ \|\varphi \|^2_{L^2(\partial\Omega_t)} +\|\partial_2 \varphi\|^2_{L^2(\partial\Omega_t)}\biggr\}.
\label{Mt}
\end{equation}
Here we used the integration by parts, the elementary inequality
\begin{equation}
\|U^\pm _{|x_1=0} (t)\|^2_{L^2(\partial\Omega_t)}\leq \|U^\pm  (t)\|^2_{L^2(\Omega_t)} +\|\partial_1 U^\pm  (t)\|^2_{L^2(\Omega_t)},
\label{tr}
\end{equation}
estimate \eqref{d1U} and the trace theorem for $F_N^+$ (or even the inequality like \eqref{tr}).

It remains to estimate the boundary integral $K(t)$ of the underlined term in \eqref{Q2"}. To this end we first integrate by parts and use the boundary condition \eqref{b50b.4}:
\begin{multline*}
K(t)=L(t) +\int_{\partial\Omega_t}\bigl({\rm coeff}\,H_N^+\,\partial_2\varphi +{\rm coeff}\,H_N^+\,\partial_2(\partial_0\varphi)\bigr)\bigr|_{x_1=0}dx_2ds \\ = L(t)+
\int_{\partial\Omega_t}\bigl({\rm coeff}\,H_N^+\,\partial_2\varphi +{\rm coeff}\,H_N^+\,\varphi +{\rm coeff}\,H_N^+\,\partial_2v_N^+\bigr)\bigr|_{x_1=0}dx_2ds ,
\end{multline*}
where
\[
L(t)=\int_{\mathbb{R}}{\rm coeff}\,H_N^+\,\partial_2\varphi|_{x_1=0}dx_2.
\]
Then we apply \eqref{tr}, estimate \eqref{d1U} and arguments as in \eqref{intbypart} with $\alpha =2$:
\begin{equation}
K(t) \leq L(t) + C\biggl\{ \sum_\pm \|F^\pm \|^2_{H^1(\Omega_T)} +\int_0^tI (s)ds \\ + \|\varphi \|^2_{L^2(\partial\Omega_t)} +\|\partial_2 \varphi\|^2_{L^2(\partial\Omega_t)}\biggr\}.
\label{Kt}
\end{equation}

We now estimate the integral $L(t)$ by using the Young inequality, the passage to the volume integral, the relation $\partial_1H_N^+={\rm coeff}\,H_2^++{\rm coeff}\,\partial_2H_2^++F_7^+$ following from \eqref{b43b}, estimate \eqref{f7"}, and the elementary inequality \eqref{elem'}:
\begin{multline}
 L(t) \leq C \left\{ \tilde{\varepsilon}\|\partial_2 \varphi (t)\|^2_{L^2(\mathbb{R})} +\frac{1}{\tilde{\varepsilon}}\int_{\mathbb{R}}(H_N^+)^2\bigr|_{x_1=0}dx_2\right\}\\
 = C \biggl\{ \tilde{\varepsilon}\|\partial_2 \varphi (t)\|^2_{L^2(\mathbb{R})}  -\frac{2}{\tilde{\varepsilon}}\int_{\mathbb{R}^2_+}H_N^+\partial_1H_N^+ dx\biggr\}\\
 \leq C \biggl\{ \tilde{\varepsilon}\|\partial_2 \varphi (t)\|^2_{L^2(\mathbb{R})}  +\frac{1}{\tilde{\varepsilon}}  \biggl(\frac{1}{\tilde{\varepsilon}^2} \|H_N^+(t)\|^2_{L^2(\mathbb{R}^2_+)}\\ +\tilde{\varepsilon}^2\bigl( \| H_2^+(t)\|^2_{L^2(\mathbb{R}^2_+)} +\|\partial_2H_2^+(t)\|^2_{L^2(\mathbb{R}^2_+)} + \|F_7^+(t)\|^2_{L^2(\mathbb{R}^2_+)}\bigr)\biggr)\biggr\} \\
 \leq \tilde{\varepsilon}C \left\{ \|\partial_2 \varphi (t)\|^2_{L^2(\mathbb{R})} +\|\partial_2U^+(t)\|^2_{L^2(\mathbb{R}^2_+)}\right\} + \widetilde{C}(\tilde{\varepsilon})\left\{ \|F^+ \|^2_{H^1(\Omega_T)}+ \int_0^tI (s)ds\right\},
\label{Lt}
\end{multline}
where we may consider the same $\tilde{\varepsilon}$ as in \eqref{estd0}. Then,
\eqref{q2}, \eqref{IntQ2}, \eqref{Mt}, \eqref{Kt} and \eqref{Lt} imply
\begin{equation} \begin{split}
&\sum_\pm \| \partial_2U^\pm (t)\|^2_{L^2(\mathbb{R}^2_+)}  + \|\partial_2\varphi (t)\|^2_{L^2(\mathbb{R})}\\
\leq  &\tilde{\varepsilon}C\left\{ \|\partial_2 \varphi (t)\|^2_{L^2(\mathbb{R})} +\|\partial_2U^+(t)\|^2_{L^2(\mathbb{R}^2_+)}\right\} + \widetilde{C}(\tilde{\varepsilon})\biggl\{\sum_\pm \|F^\pm \|^2_{H^1(\Omega_T)}\\
 & +\int_0^tI (s)ds+ \|\varphi \|^2_{L^2(\partial\Omega_t)} +\|\partial_2 \varphi\|^2_{L^2(\partial\Omega_t)}\biggr\}.
\label{estd2}
\end{split}   \end{equation}

At last, choosing $\tilde{\varepsilon} $ to be small enough and taking into account that
\[
\|\partial_0U^\pm (t)\|^2_{L^2(\mathbb{R}^2_+)} +c_1\|\partial_2U^\pm (t)\|^2_{L^2(\mathbb{R}^2_+)} \geq c_2\left\{ \|\partial_tU^\pm (t)\|^2_{L^2(\mathbb{R}^2_+)} +\|\partial_2U^\pm (t)\|^2_{L^2(\mathbb{R}^2_+)}\right\},
\]
with suitable positive constants $c_1$ and $c_2$ depending on the basic state,  the combination of \eqref{estd0} and \eqref{estd2} yields the energy inequality
\begin{equation}
J(t) \leq C \left\{ \sum_\pm \|F^\pm \|^2_{H^1(\Omega_T)}  +\int_0^tJ (s)ds\right\},
\label{main_energy_in}
\end{equation}
where
\[
J(t)=I(t) + \|\varphi (t) \|^2_{L^2(\mathbb{R})} +\|\partial_2 \varphi (t)\|^2_{L^2(\mathbb{R})}.
\]
Then, applying Gronwall's lemma and integrating in time over $(-\infty,T)$, from \eqref{main_energy_in} we derive the energy a priori estimate
\begin{multline}
\sum_{\pm}\left(
\|U^\pm\|_{L^2(\Omega_T)}+
\|\partial_t U^\pm\|_{L^2(\Omega_T)}+ \|\partial_2U^\pm\|_{L^2(\Omega_T)}\right) \\
+\|\varphi\|_{L^2(\partial\Omega_T)} + \|\partial_2\varphi\|_{L^2(\partial\Omega_T)}\leq C\sum_{\pm}\|F^\pm \|_{H^{1}(\Omega_T)}.
\label{main_estim}
\end{multline}

In view of \eqref{tr}, from the boundary condition \eqref{b50b.4} we obtain the estimate
\begin{equation}
\|\partial_t\varphi\|_{L^2(\partial\Omega_T)}  \leq C \bigl\{\|U^+\|_{L^2(\Omega_T)}+
\|\partial_1 U^+\|_{L^2(\Omega_T)}  +\|\varphi\|_{L^2(\partial\Omega_T)} + \|\partial_2\varphi\|_{L^2(\partial\Omega_T)}\bigr\}.
\label{front_t}
\end{equation}
Combining \eqref{d1U} (with $t=T$), \eqref{main_estim} and \eqref{front_t}, we deduce the desired a priori estimate \eqref{main_est'}. Thus, the proof of Lemma \ref{lem1} is complete. Recall that Lemma \ref{lem1} implies estimate \eqref{main_est} in Theorem \ref{t1}.

\section{Well-posedness of the linearized problem}
\label{sec:exist}

\subsection{Existence of solutions for a fully noncharacteristic ``strictly dissipative'' re\-gularization} We prove the existence of a unique solution $((\dot{U}^+,\dot{U}^- ),\varphi )\in H^1(\Omega_T)\times H^1(\partial\Omega_T)$ to problem \eqref{b48^}--\eqref{b51^} by using its fully noncharacteristic ``strictly dissipative'' regularization containing a small parameter of regularization $\varepsilon$:
\begin{multline}
A_0(\widehat{U}^{\pm})\partial_t\dot{U}^{\pm \varepsilon} -\varepsilon\partial_1\dot{U}^{\pm \varepsilon} +\widetilde{A}_1(\widehat{U}^{\pm},\widehat{\Psi}^{\pm})\partial_1\dot{U}^{\pm \varepsilon}\\+A_2(\widehat{U}^{\pm} )\partial_2\dot{U}^{\pm \varepsilon}  +\mathcal{C}(\widehat{U}^{\pm},\widehat{\Psi}^{\pm})\dot{U}^{\pm \varepsilon} =f^{\pm}\qquad \mbox{in}\ \Omega_T,\label{r48^}
\end{multline}
\begin{equation}
\left(
\begin{array}{c}
\dot{p}^{+ \varepsilon}-\dot{p}^{- \varepsilon} + \varphi^{\varepsilon} [\partial_1\hat{p}]\\[3pt]
\dot{v}_1^{+ \varepsilon}-\dot{v}_1^{- \varepsilon}\\[3pt]
\dot{v}_2^{+ \varepsilon}-\dot{v}_2^{- \varepsilon}\\[3pt]
\dot{H}_{\tau}^{+ \varepsilon}-\dot{H}_{\tau}^{- \varepsilon}+ \varphi^{\varepsilon} [\partial_1\widehat{H}_{\tau}]\\[3pt]
\partial_t\varphi^{\varepsilon} +\hat{v}_2^{+}\partial_2\varphi^{\varepsilon} -\dot{v}_{N}^{+ \varepsilon} - \varphi^{\varepsilon} \partial_1\hat{v}_N^+
\end{array}
\right)=g \qquad \mbox{on}\ \partial\Omega_T,
\label{r50^}
\end{equation}
\begin{equation}
(\dot{U}^{+ \varepsilon},\dot{U}^{- \varepsilon},\varphi^{\varepsilon} )=0\qquad \mbox{for}\ t<0.\label{r51^}
\end{equation}
The boundary conditions \eqref{r50^} for $(\dot{U}^{+ \varepsilon},\dot{U}^{- \varepsilon},\varphi^{\varepsilon} )$ coincide with \eqref{b50^} whereas the interior equations \eqref{r48^} contain the additional terms $-\varepsilon\partial_1\dot{U}^{\pm \varepsilon} $ compared to the original (not regularized) equations \eqref{b48^}.

\begin{lemma}
Let assumptions \eqref{a5}--\eqref{cdass} be satisfied and the constant $\varepsilon >0$ be small enough compared to the constant $\bar{\kappa}_0$ in \eqref{cdass}. Then, for any fixed constant $\varepsilon$ and for all $f^{\pm} \in H^1(\Omega_T)$ and $g\in H^{1}(\partial\Omega_T)$ which vanish in the past, problem \eqref{r48^}--\eqref{r51^} has a unique solution $((\dot{U}^{+ \varepsilon}, \dot{U}^{- \varepsilon}) ,\varphi^{\varepsilon} )\in H^1(\Omega_T)\times H^1(\partial\Omega_T)$.
\label{lem2}
\end{lemma}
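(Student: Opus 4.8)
\textbf{Proof plan for Lemma \ref{lem2}.} The regularized system \eqref{r48^}--\eqref{r51^} is a symmetric hyperbolic boundary value problem for $\dot U^{\varepsilon}=(\dot U^{+\varepsilon},\dot U^{-\varepsilon})$ on $\mathbb R^{2}_{+}$, coupled through \eqref{r50^} with a first order transport equation on $\partial\Omega_{T}$ for the front (the last line of \eqref{r50^}, which reads $\partial_{0}\varphi^{\varepsilon}-\varphi^{\varepsilon}\partial_{1}\hat v_{N}^{+}=\dot v_{N}^{+\varepsilon}|_{x_{1}=0}+g_{5}$, with $\partial_{0}$ as in \eqref{d0}). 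The crucial new feature is that, unlike for \eqref{b48b}--\eqref{b51b}, the boundary $\{x_{1}=0\}$ is now \emph{noncharacteristic}: the eigenvalues of $\widetilde A_{1}(\widehat U^{\pm},\widehat\Psi^{\pm})|_{x_{1}=0}$ are $0$ (of multiplicity two, the entropy and ``divergence'' modes) and $\pm$ the fast and slow normal magnetosonic velocities, and the slow velocity is bounded below by a positive constant depending only on $K,\bar\rho_{0},\bar p_{0}$ and $\bar\kappa_{0}$ because of \eqref{cdass}. Hence, for $\varepsilon$ below that bound, the matrices $\widetilde A_{1}(\widehat U^{\pm},\widehat\Psi^{\pm})-\varepsilon I$ are nonsingular on $\{x_{1}=0\}$, and by continuity in a fixed neighbourhood of it. As a preliminary reduction, subtracting a lifting $\widetilde U^{\pm}\in H^{1}(\Omega_{T})$ of the boundary data (exactly as in subsection 4.1) one may assume $g=0$, at the price of a new $H^{1}$ source.

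Next I would derive the ($\varepsilon$-dependent) a priori estimate for \eqref{r48^}--\eqref{r51^} by the energy method. Multiplying the two systems by $\dot U^{\pm\varepsilon}$ and integrating over $\mathbb R^{2}_{+}$, the regularizing term $-\varepsilon\partial_{1}\dot U^{\pm\varepsilon}$ produces the boundary contribution $\tfrac{\varepsilon}{2}\int_{\partial\Omega_{t}}(|\dot U^{+\varepsilon}|^{2}+|\dot U^{-\varepsilon}|^{2})|_{x_{1}=0}$ with the \emph{good} sign --- this is the ``strict dissipativity'' --- so the full traces of $\dot U^{\varepsilon}$ on $\partial\Omega_{t}$ are controlled. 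The remaining boundary quadratic form $\tfrac12\int_{\partial\Omega_{t}}(\widetilde A_{1}\dot U^{\varepsilon},\dot U^{\varepsilon})|_{x_{1}=0}$, once the homogeneous conditions $[\dot v^{\varepsilon}]=0$, $[\dot p^{\varepsilon}]=-\varphi^{\varepsilon}[\partial_{1}\hat p]$, $[\dot H_{\tau}^{\varepsilon}]=-\varphi^{\varepsilon}[\partial_{1}\widehat H_{\tau}]$ are inserted (as in the computation \eqref{quadr}), reduces to terms that are \emph{bilinear} in $\varphi^{\varepsilon}$ and the trace of $\dot U^{+\varepsilon}$, with no uncontrolled $|\dot U^{\varepsilon}|^{2}$ term; by Young's inequality these are absorbed into the above trace control plus $\varepsilon^{-1}\|\varphi^{\varepsilon}\|^{2}_{L^{2}}$. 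Combining this with the transport estimate $\|\varphi^{\varepsilon}(t)\|^{2}_{L^{2}(\mathbb R)}\le C\int_{0}^{t}\bigl(\|\varphi^{\varepsilon}(s)\|^{2}_{L^{2}(\mathbb R)}+\|\dot v_{N}^{+\varepsilon}|_{x_{1}=0}(s)\|^{2}_{L^{2}(\mathbb R)}\bigr)\,ds$ issuing from the last boundary condition, and applying Gronwall's lemma, yields $\|\dot U^{\varepsilon}\|^{2}_{L^{2}(\Omega_{T})}+\|\varphi^{\varepsilon}\|^{2}_{L^{2}(\partial\Omega_{T})}\le C_{\varepsilon}\bigl(\sum_{\pm}\|f^{\pm}\|^{2}_{L^{2}}+\|g\|^{2}_{H^{1}}\bigr)$. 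Tangential derivatives $\partial_{t}\dot U^{\varepsilon},\partial_{2}\dot U^{\varepsilon}$ and $\partial_{t}\varphi^{\varepsilon},\partial_{2}\varphi^{\varepsilon}$ are estimated by the same argument applied to the differentiated systems, while the normal derivative $\partial_{1}\dot U^{\varepsilon}$ is recovered near the boundary from the now \emph{invertible} relation $\partial_{1}\dot U^{\pm\varepsilon}=(\widetilde A_{1}(\widehat U^{\pm},\widehat\Psi^{\pm})-\varepsilon I)^{-1}\bigl(f^{\pm}-A_{0}\partial_{t}\dot U^{\pm\varepsilon}-A_{2}\partial_{2}\dot U^{\pm\varepsilon}-\mathcal C\dot U^{\pm\varepsilon}\bigr)$ and, away from the boundary, from the weighted ($\sigma\partial_{1}$) estimate of Proposition \ref{p5}, which needs no boundary condition. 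This gives the full $H^{1}$ bound, with a constant that may depend on $\varepsilon$ (harmless, $\varepsilon$ being fixed).

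Existence of $((\dot U^{+\varepsilon},\dot U^{-\varepsilon}),\varphi^{\varepsilon})\in H^{1}(\Omega_{T})\times H^{1}(\partial\Omega_{T})$ then follows by a standard argument. The adjoint of \eqref{r48^}--\eqref{r51^} is a boundary value problem of exactly the same type --- same noncharacteristic boundary, the regularization $-\varepsilon\partial_{1}$ becoming $+\varepsilon\partial_{1}$ on the backward-in-time problem, and the adjoint boundary conditions for the pair $(\dot U^{\varepsilon},\varphi^{\varepsilon})$ again strictly dissipative --- so it obeys an analogous a priori estimate; hence a weak $L^{2}$-solution of \eqref{r48^}--\eqref{r51^} exists by duality, and the $H^{1}$ a priori estimate above upgrades it to an $H^{1}$-solution, uniqueness being immediate from the estimate. (Equivalently, one may set up a fixed-point iteration decoupling the two interior systems --- each a noncharacteristic, strictly dissipative initial--boundary value problem, solvable by classical results on symmetric hyperbolic systems, see e.g.\ \cite{SecTra} --- from the transport equation for $\varphi^{\varepsilon}$, and close it on a short time interval via the a priori bound, then exhaust $[0,T]$.)

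The point requiring real care is the boundary: since the unregularized problem \eqref{b48b}--\eqref{b51b} is characteristic and only weakly Kreiss--Lopatinski, the term $-\varepsilon\partial_{1}$ must be chosen precisely so as to make the boundary noncharacteristic \emph{and} the boundary conditions strictly dissipative, and one has to verify that the Young/Gronwall bookkeeping actually closes. It does, because all the boundary terms built from $\widetilde A_{1}$ turn out to be bilinear in $\varphi^{\varepsilon}$ and the trace of $\dot U^{+\varepsilon}$ --- a direct consequence of the special form of the contact-discontinuity boundary conditions --- so that no uncontrolled trace term survives; and one must likewise check that the adjoint problem inherits the same dissipative structure. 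The resulting constant blows up as $\varepsilon\to0$, which is irrelevant here but is exactly the reason why the uniform ($\varepsilon$-independent) estimate of Lemma \ref{lem1}, from which the limit $\varepsilon\to0$ will be taken, has to be proved separately.
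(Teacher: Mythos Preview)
Your plan follows the paper's proof in essentially every step: the $-\varepsilon\partial_1$ term makes the boundary noncharacteristic while leaving the count of incoming characteristics unchanged (so the given boundary conditions remain of correct number); the boundary quadratic form built from $\widetilde A_1$ reduces, via \eqref{quadr} and the homogeneous jump conditions, to terms bilinear in $\varphi^\varepsilon$ and traces of $\dot U^{+\varepsilon}$, absorbed by Young against the new $\varepsilon$-trace control; Gronwall closes the $L^2$ bound; tangential differentiation and the noncharacteristic inversion give $H^1$; and existence is by duality.

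The one place where you move too fast is the claim that the adjoint problem is ``of exactly the same type'' with ``the adjoint boundary conditions for the pair $(\dot U^\varepsilon,\varphi^\varepsilon)$ again strictly dissipative.'' The paper regards this verification as the heart of the lemma and carries it out explicitly (for frozen coefficients, to keep the algebra tractable). The dual is \emph{not} a problem for a pair $(\overline U,\overline\varphi)$: the formerly characteristic modes, which became outgoing ($-\varepsilon$) in the primal, become incoming ($+\varepsilon$) in the dual, so the dual needs eight boundary conditions rather than four; among them are the Dirichlet conditions $\overline H_1^\pm=\overline S^\pm=0$, and the analogue of the front equation is a transport equation for the combination $w=[\overline p+\widehat H_2\overline H_2]-\varepsilon\langle\overline v_1\rangle$. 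That the dual boundary form nonetheless equals $\varepsilon\langle|V|^2\rangle$ plus $O(\varepsilon^2)$ and $w$-coupled remainders is the content of the computation \eqref{du1}--\eqref{QQQ} and does not follow by symmetry from the primal estimate. Your fixed-point alternative would avoid this calculation, but if you go the duality route (as the paper does) this is the step that needs to be written out.
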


\begin{proof}
Since the constant $\varepsilon >0$ is small enough compared to the constant $\bar{\kappa}_0$ in \eqref{cdass}, the number of incoming characteristics for systems \eqref{r48^} coincides with that for systems \eqref{b48^}. Considering for a moment ${\varphi}^{\,\varepsilon}$ as a given function and taking into account \eqref{quadr}, we easily see that the boundary conditions \eqref{r50^} (without the last one) are {\it strictly dissipative}:
\[
%\begin{equation}
\sum_{\pm}\bigl(\bigl\{\varepsilon I_6- \widetilde{A}_1(\widehat{U}^\pm,\widehat\Psi^{\pm}\ddot{})\bigr\}\dot{U}^{\pm \varepsilon} , \dot{U}^{\pm \varepsilon} \bigr)\bigr|_{x_1=0}\geq
\sum_{\pm}\frac{\varepsilon}{2}|\dot{U}^{\pm \varepsilon}_{|x_1=0}|^2- \frac{C}{\varepsilon}(|{g}|^2+|{\varphi}^{\,\varepsilon}|^2),
%\label{strdiss}
%\end{equation}
\]
where $I_6$ is the unit matrix of order 6.  Then, by standard arguments of the energy method we obtain
\begin{multline}
\sum_{\pm}\bigl(\|\dot{U}^{\pm \varepsilon} (t)\|^2_{L^{2}(\mathbb{R}^2_+)} +\|\dot{U}^{\pm \varepsilon}_{|x_1=0} \|^2_{L^{2}(\partial\Omega_t)} \bigr) \\ \leq \frac{C}{\varepsilon^2} \biggl\{\sum_{\pm}\big(\|f^\pm \|^2_{L^{2}(\Omega_T)} + \|\dot{U}^{\pm \varepsilon} \|^2_{L^{2}(\Omega_t)}\big) + \|{g}\|^2_{L^{2}(\partial\Omega_T)}
+\|{\varphi}^{\,\varepsilon} \|^2_{L^2(\partial\Omega_t)}\biggr\}.
\label{est_reg}
\end{multline}

Using the Young inequality, from the last boundary condition in \eqref{r50^} we easily deduce
\begin{equation}
\|{\varphi}^{\,\varepsilon} (t)\|^2_{L^{2}(\mathbb{R}^)} \leq C\Big( \delta\|\dot{U}^{\pm \varepsilon}_{|x_1=0} \|^2_{L^{2}(\partial\Omega_t)} +\frac{1}{\delta}\|{\varphi}^{\,\varepsilon} \|^2_{L^2(\partial\Omega_T)}\Big),
\label{est_reg^}
\end{equation}
with a positive constant $\delta$. Combining inequalities \eqref{est_reg} and \eqref{est_reg^}, for a sufficiently small $\delta$ we obtain
\begin{multline*}
\sum_{\pm}\|\dot{U}^{\pm \varepsilon} (t)\|^2_{L^{2}(\mathbb{R}^2_+)} +\|{\varphi}^{\,\varepsilon} (t)\|^2_{L^{2}(\mathbb{R}^)}   \\ \leq \frac{C}{\varepsilon^2} \biggl\{\sum_{\pm}\big(\|f^\pm \|^2_{L^{2}(\Omega_T)} + \|\dot{U}^{\pm \varepsilon} \|^2_{L^{2}(\Omega_t)}\big)  + \|{g}\|^2_{L^{2}(\partial\Omega_T)}
+\|{\varphi}^{\,\varepsilon} \|^2_{L^2(\partial\Omega_t)}\biggr\}.
\end{multline*}
Applying then Gronwall's lemma, we get the $L^2$ estimate
\begin{equation}
\sum_{\pm}\|\dot{U}^{\pm \varepsilon} \|_{L^{2}(\Omega_T)} +\|{\varphi}^{\,\varepsilon} \|_{L^2(\partial\Omega_T)}   \leq {C}({\varepsilon} ) \biggl\{\sum_{\pm}\|f^\pm \|_{L^{2}(\Omega_T)} +  \|{g}\|_{L^{2}(\partial\Omega_T)}\biggr\},
\label{est_reg_a}
\end{equation}
where $C(\varepsilon )\rightarrow +\infty$ as $\varepsilon\rightarrow 0$, but now the constant $\varepsilon$ is {\it fixed}. Note that we could also include the $L^2$ norm of the trace of solution in this estimate.
Using tangential differentiation and the fact that the boundary $x_1=0$ is not characteristic for $\varepsilon \neq 0$, we also easily obtain the $H^1$ estimate
\begin{equation}
\sum_{\pm}\|\dot{U}^{\pm \varepsilon} \|_{H^{1}(\Omega_T)} +\|{\varphi}^{\,\varepsilon} \|_{H^1(\partial\Omega_T)}   \\  \leq {C}({\varepsilon}) \biggl\{\sum_{\pm}\|f^\pm \|_{H^{1}(\Omega_T)} +  \|{g}\|_{H^{1}(\partial\Omega_T)}\biggr\}.
\label{est_reg'}
\end{equation}

Having in hand the $L^2$ estimate \eqref{est_reg_a}, the existence of a weak $L^2$ solution to problem \eqref{r48^}--\eqref{r51^} can be obtained by the classical duality argument. To this end we should obtain an $L^2$ a priori estimate for the dual problem. The boundary conditions \eqref{r50^} are not strictly dissipative in the classical sense but in some sense they are similar to them and it is natural to expect the same from the dual problem. This will enable us to derive an a priori estimate for the dual problem.

To avoid overloading the paper and hiding simple ideas in the shadow of unimportant technical calculations, for the dual problem we restrict ourselves to ``frozen'' coefficients and the case $\hat{\varphi}=0$. More precisely, we consider the following constant coefficients version of problem \eqref{r48^}--\eqref{r51^} (we also drop the superscript $\varepsilon$ by the unknowns):
\begin{equation}
L^{\pm}U^{\pm}:=A_0\partial_t{U}^{\pm} -\varepsilon\partial_1{U}^{\pm} \pm{A}_1\partial_1{U}^{\pm}+A_2\partial_2{U}^{\pm} =f^{\pm}\qquad \mbox{in}\ \Omega_T,\label{mod1}
\end{equation}
\begin{equation}
[p]= a\varphi,\quad [v]=0,\quad [H_2]=b\varphi,\quad \partial_0\varphi =v_1^+ \qquad \mbox{on}\ \partial\Omega_T,
\label{mod2}
\end{equation}
\begin{equation}
({U}^+,U^-,\varphi )=0\qquad \mbox{for}\ t<0,\label{mod3}
\end{equation}
where $A_{\alpha}$ are the matrices of the MHD system \eqref{4} calculated on a constant vector with $\hat{v}_1=0$, $v_2=\hat{v}_2$, $H_1=\widehat{H}_1$, etc. ($\hat{v}_2=\hat{v}_2^\pm ={\rm const}$, $\widehat{H}_1 =\widehat{H}_1^\pm ={\rm const}$, etc.);  $a=-[\partial_1\hat{p}]={\rm const}$,  $b=-[\partial_1\widehat{H}_2]={\rm const}$, $\partial_0=\partial_t+\hat{v}_2\partial_2$, and we consider yet homogeneous boundary conditions.

The dual operators have the form $(L^\pm )^*=-L^\pm$ (recall that we consider constant coefficients) and the boundary conditions for the dual problem are defined from the requirement
\begin{multline*}
\sum_{\pm}\Big\{
(L^{\pm}U^{\pm},\overline{U}^{\pm})_{L^2(\Omega_T)}-(U^{\pm},(L^{\pm})^*\overline{U}^{\pm})_{L^2(\Omega_T)}\Big\} \\= -\big[(A_1U,\overline{U})_{L^2(\Omega_T)}\big]+\varepsilon \sum_{\pm}(U^\pm ,\overline{U}^\pm )_{L^2(\Omega_T)}=0,
\end{multline*}
with $\overline{U}^{\pm}|_{t=T}=0$ and $U^\pm$ satisfying the boundary conditions \eqref{mod2}. Omitting simple calculations, we write down these boundary conditions:
\begin{eqnarray*}
\partial_0w=(a+b\widehat{H}_2)\overline{v}_1^+
-\varepsilon a \overline{p}^+-b( \widehat{H}_1\overline{v}_2^+ +\varepsilon \overline{H}_2^+), & \\[3pt]
 [\overline{v}_1]=\varepsilon \langle \overline{p}\rangle , & \\[3pt]
 \widehat{H}_1 [\overline{H}_2]=-\varepsilon \langle \overline{v}_1\rangle , &  \\[3pt]
  [\widehat{H}_2 \overline{v}_1 -\widehat{H}_1\overline{v}_2]=\varepsilon \langle \overline{H}_2\rangle , &  \\[3pt]
  \overline{H}_1^+ = \overline{H}_1^- =\overline{S}^+=\overline{S}^- =0& \qquad \mbox{at}\ x_1=0,
\end{eqnarray*}
where $\langle z\rangle :=(z^+ +z^-)|_{x_1=0}$ and
\begin{equation}
w=[\overline{p}+\widehat{H}_2\overline{H}_2]-\varepsilon \langle \overline{v}_1\rangle . \label{ww}
\end{equation}
Changing the time $t$ to $-t$ and then shifting it to the value $T$, we obtain the dual problem in the same form as the original problem \eqref{mod1}--\eqref{mod3}:
\begin{equation}
A_0\partial_t\overline{U}^{\pm} +\varepsilon\partial_1\overline{U}^{\pm} \mp {A}_1\partial_1\overline{U}^{\pm}-A_2\partial_2\overline{U}^{\pm} =\overline{f}^{\pm}\qquad \mbox{in}\ \Omega_T,\label{mod4}
\end{equation}
\begin{eqnarray}
\overline{\partial}_0w=\varepsilon a \overline{p}^+-(a+b\widehat{H}_2)\overline{v}_1^+
 +b( \widehat{H}_1\overline{v}_2^+ +\varepsilon \overline{H}_2^+), & \label{mod5.1} \\[3pt]
 [\overline{v}_1]=\varepsilon \langle \overline{p}\rangle , & \label{mod5.2} \\[3pt]
 \widehat{H}_1 [\overline{H}_2]=-\varepsilon \langle \overline{v}_1\rangle , &  \label{mod5.3} \\[3pt]
  [\widehat{H}_2 \overline{v}_1 -\widehat{H}_1\overline{v}_2]=\varepsilon \langle \overline{H}_2\rangle , &  \label{mod5.4} \\[3pt]
  \overline{H}_1^+ = \overline{H}_1^- =\overline{S}^+=\overline{S}^- =0& \qquad \mbox{on}\ \partial\Omega_T, \label{mod5.5}
\end{eqnarray}
\begin{equation}
(\overline{U}^+,\overline{U}^-)=0\qquad \mbox{for}\ t<0,\label{mod6}
\end{equation}
where $\overline{\partial}_0=\partial_t-\hat{v}_2\partial_2$. The hyperbolic systems \eqref{mod4} together have four additional incoming characteristics compared to systems \eqref{mod1}. That is, the number of boundary conditions in \eqref{mod5.1}--\eqref{mod5.5} (they are eight together) coincides with the number of incoming characteristics of systems \eqref{mod4}.

We introduce the notation
\[
V^\pm =(\overline{p}^\pm ,\overline{v}_1^\pm , \overline{v}_2^\pm ,\overline{H}_2^\pm ).
\]
Then, in view of \eqref{ww} and the boundary conditions \eqref{mod5.2}--\eqref{mod5.4}, using the fact that $\langle z\rangle =2z^+_{|x_1=0} -[z]$,  we obtain
\begin{equation}
[V]=2\varepsilon
\begin{pmatrix}
\overline{v}_1^+ +(\widehat{H}_2/\widehat{H}_1)\,\overline{v}_2^+ \\[3pt]
\overline{p}^+ \\[3pt]
(\widehat{H}_2/\widehat{H}_1)\,\overline{p}^+ - (1/\widehat{H}_1)\,\overline{H}_2^+\\[3pt]
-(1/\widehat{H}_1)\,\overline{v}_2^+
\end{pmatrix}
+\varepsilon^2 B_1V^+ + w\,B_2V^+,
\label{du1}
\end{equation}
where the coefficient matrix $B$ has elements of order ${\cal O}(1)$ as $\varepsilon \rightarrow 0$ whose exact form is of no interest and the exact form of the coefficient matrix $B_2$ is also unimportant for subsequent calculations.

As usual, the quadratic form
\[
Q=[( A_1\overline{U} , \overline{U} )]  -\varepsilon \langle |\overline{U}|^2 \rangle  = 2[\overline{p}\,\overline{v}_1]+2\widehat{H}_2[\overline{v}_1\overline{H}_2]-2\widehat{H}_1[\overline{v}_2\overline{H}_2] -\varepsilon \langle |V|^2 \rangle
\]
with the boundary matrix plays the crucial role for deriving a priori estimates by the energy method. Here we have already used the boundary conditions \eqref{mod5.5}. Applying then \eqref{du1} and omitting simple algebraic calculations, we get
\begin{multline}
Q=2\varepsilon |V^+_{|x_1=0}|^2 +\varepsilon^2 (MV^+,V^+)|_{x_1=0}\,+ w\, (q,V^+)|_{x_1=0} \\
= \varepsilon \langle |V|^2 \rangle +\varepsilon^2 (\widetilde{M}V^+,V^+)|_{x_1=0}\,+ w\, (\tilde{q},V^+)|_{x_1=0},
\label{QQQ}
\end{multline}
where the coefficient matrices $M$ and $\widetilde{M}$ as well the coefficient vectors $q$ and $\tilde{q}$ are of no interest (the elements of these matrices are of order ${\cal O}(1)$ for small $\varepsilon$, i.e., between the first two terms in the right-hand side of \eqref{QQQ} the first positive one is leading).

Using \eqref{QQQ} and the Young inequality, by standard arguments of the energy method we obtain for systems \eqref{mod4} (for $\varepsilon$ small enough) the energy inequality
\begin{multline}
\sum_{\pm}\big(\|\overline{U}^\pm  (t)\|^2_{L^{2}(\mathbb{R}^2_+)} +\|\overline{U}^\pm _{|x_1=0} \|^2_{L^{2}(\partial\Omega_t)}\big)  \\ \leq {C}({\varepsilon})
\Big( \|\overline{f}\|^2_{L^{2}(\Omega_T)} + \sum_{\pm}\|\overline{U}^\pm \|^2_{L^{2}(\Omega_t)}+ \|w|_{x_1=0} \|^2_{L^{2}(\partial\Omega_t)}
\Big)
\label{mod8}
\end{multline}
(note that, in view of \eqref{mod5.5}, $\|\overline{V}^\pm _{|x_1=0} \|^2_{L^{2}(\partial\Omega_t)}=\|\overline{U}^\pm _{|x_1=0} \|^2_{L^{2}(\partial\Omega_t)}$). At the same time, from the boundary condition \eqref{mod5.1} we get
\begin{equation}
\|w|_{x_1=0} (t)\|^2_{L^{2}(\mathbb{R})}\leq
C\Big( \delta\|V^+|_{x_1=0} \|^2_{L^{2}(\partial\Omega_t)} +\frac{1}{\delta}\|w|_{x_1=0} \|^2_{L^{2}(\partial\Omega_t)} \Big),
\label{mod9}
\end{equation}
with a positive constant $\delta$. By choosing $\delta$ small enough and combining inequalities \eqref{mod8} and \eqref{mod9}, one gets
\begin{multline*}
\sum_{\pm}\|\overline{U}^\pm (t)\|^2_{L^{2}(\mathbb{R}^2_+)} +\|w |_{x_1=0}(t)\|^2_{L^{2}(\mathbb{R})} \\ \leq {C}({\varepsilon})
\Big( \|\overline{f}\|^2_{L^{2}(\Omega_T)} + \sum_{\pm}\|\overline{U}^\pm \|^2_{L^{2}(\Omega_t)}+ \|w|_{x_1=0} \|^2_{L^{2}(\partial\Omega_t)}
\Big).
\end{multline*}
Applying Gronwall's lemma to the last inequality gives the desired a priori estimate
\begin{equation}
\sum_{\pm}\|\overline{U}^\pm \|_{L^{2}(\Omega_T)}  \leq C({\varepsilon}) \sum_{\pm}\|\overline{f}^\pm \|_{L^{2}(\Omega_T)}
\label{estdu}
\end{equation}
for the dual problem.

Thanks to the control on the trace of solution (``strict dissipativity'') we can easily generalize estimate \eqref{estdu} to the case of inhomogeneous boundary conditions in \eqref{mod5.1}--\eqref{mod5.5}. Thus, for the problem adjoint to \eqref{r48^}--\eqref{r51^} we can derive the a priori $L^2$ estimate
\begin{equation}
\sum_{\pm}\|\overline{U}^{\pm \varepsilon} \|_{L^{2}(\Omega_T)}   \leq {C}({\varepsilon}) \biggl\{\sum_{\pm}\|\overline{f}^\pm \|_{L^{2}(\Omega_T)} +  \|\overline{g}\|_{L^{2}(\partial\Omega_T)}\biggr\}
\label{est_reg_b}
\end{equation}
with no loss of derivatives from the data $\overline{f}^\pm$ and $\overline{g}$. In view of the a priori estimates \eqref{est_reg_a} and \eqref{est_reg_b}, the classical duality argument gives the existence of a weak $L^2$ solution to problem \eqref{r48^}--\eqref{r51^}. Then, tangential differentiation and the fact the boundary $x_1=0$ is not characteristic for $\varepsilon \neq 0$ gives the existence of an $H^1$ solution for any fixed sufficiently small parameter $\varepsilon >0$. Its uniqueness follows from the a priori estimate \eqref{est_reg_a}.
\end{proof}

The a priori estimate \eqref{est_reg'} is not uniform in $\varepsilon$ and not suitable to pass to the limit as $\varepsilon\rightarrow 0$. However, thanks to Lemma \ref{lem2} we have now the existence of solutions of the regularized problem for any fixed sufficiently small parameter $\varepsilon >0$. Estimate \eqref{est_reg'} as well as Lemma \ref{lem2} holds true even if the stability condition \eqref{RTL} is violated. Below, assuming the fulfilment of \eqref{RTL} and a more regularity for the source term $g$ (as in Theorem \ref{t1}, $g\in H^{3/2}(\partial\Omega_T)$), we will get for problem \eqref{r48^}--\eqref{r51^} an a priori estimate uniform in $\varepsilon$. Actually, this estimate is nothing else than our basic a priori estimate \eqref{main_est}.

\subsection{Uniform in $\varepsilon$ estimate and passage to the limit}
We now revisit several places in the arguments of Section \ref{sec:4} and adapt them for problem \eqref{r48^}--\eqref{r51^}. First of all, as for problem \eqref{b48^}--\eqref{b51^}, it is enough to prove the a priori estimate \eqref{main_est'} for a corresponding reduced problem with homogeneous boundary conditions. Passing to the new unknown
\[
U^{\pm \varepsilon}=\dot{U}^{\pm \varepsilon}-\widetilde{U}^\pm ,
\]
where $\widetilde{U}^\pm$ is the same as in \eqref{a87'}, we obtain the following problem (for the sake of notational convenience, we below omit the index $\varepsilon$ in $U^{\pm \varepsilon}$, $F^{\pm \varepsilon}$, etc.):
\begin{multline}
A_0(\widehat{U}^{\pm})\partial_t{U}^{\pm} +\bigl(\widetilde{A}_1(\widehat{U}^{\pm},\widehat{\Psi}^{\pm})-\varepsilon I_6\bigr)\partial_1{U}^{\pm}\\ +A_2(\widehat{U}^{\pm} )\partial_2{U}^{\pm}
 +\mathcal{C}(\widehat{U}^{\pm},\widehat{\Psi}^{\pm}){U}^{\pm} =F^{\pm}\qquad \mbox{in}\ \Omega_T,\label{rb48b}
\end{multline}
\begin{eqnarray}
 {[}p{]}=- \varphi {[}\partial_1\hat{p}{]}, & \label{rb50b.1}\\[3pt]
  {[}v{]}=0, & \label{rb50b.2}\\[3pt]
  {[}H_{\tau}{]}=-\varphi {[}\partial_1\widehat{H}_{\tau}{]}, & \label{rb50b.3} \\[3pt]
{v}_{N}^+ = \partial_0\varphi - \varphi \partial_1\hat{v}_N^+  & \qquad \mbox{on}\ \partial\Omega_T
\label{rb50b.4}
\end{eqnarray}
\begin{equation}
({U}^+,{U}^-,\varphi )=0\qquad \mbox{for}\ t<0,\label{rb51b}
\end{equation}
where
\[
F^\pm =f^\pm-A_0(\widehat{U}^{\pm})\partial_t\widetilde{U}^{\pm} -\left(\widetilde{A}_1(\widehat{U}^{\pm},\widehat{\Psi}^{\pm})-\varepsilon I_6\right)\partial_1\widetilde{U}^{\pm} \\ -A_2(\widehat{U}^{\pm} )\partial_2\widetilde{U}^{\pm}-\mathcal{C}(\widehat{U}^{\pm},\widehat{\Psi}^{\pm})\widetilde{U}^\pm
\]
obeys the uniform in $\varepsilon$  estimate \eqref{nonhom_F'}.

Solutions to problem \eqref{rb48b}--\eqref{rb51b} satisfy \eqref{b43b} and the counterpart of \eqref{aa1^} reads:
\begin{equation}
\partial_t a^{\pm}-\varepsilon\partial_1a^{\pm}+  \frac{1}{\partial_1\widehat{\Phi}^{\pm}}\left\{ (\hat{w}^{\pm} ,\nabla a^{\pm}) + a^{\pm}\bigl({\rm div}\,\hat{u}^{\pm}-\varepsilon\partial_1^2\widehat{\Phi}^{\pm}\bigr)\right\}={\mathcal F}^{\pm}\quad \mbox{in}\ \Omega_T,
\label{raa1^}
\end{equation}
where $a^{\pm}=F_7^{\pm}/\partial_1\widehat{\Phi}^{\pm},\quad {\mathcal F}^{\pm}=({\rm div}\,{F}_{h}^{\pm})/\partial_1\widehat{\Phi}^{\pm}$,
${F}_{h}^{\pm}=(F_{N}^{\pm} ,\partial_1\widehat{\Phi}^{\pm}F_5^{\pm})$ and $F_{N}^{\pm}=F_4^{\pm}-F_5^{\pm}\partial_2\widehat{\Psi}^{\pm}$. As \eqref{aa1^}, equations \eqref{raa1^} do not still need any boundary conditions and we get estimates \eqref{f7^} and \eqref{f7"}. Moreover, below we will also need a uniform in $\varepsilon$ estimate for the trace $\varepsilon F_7^+|_{x_1=0}$. Exactly as for the case of strictly dissipative boundary conditions, from \eqref{raa1^} we easily obtain the inequality
\[
\|a^+(t)\|^2_{L^2(\mathbb{R}^2_+)} +\varepsilon\|a^+_{|x_1=0}\|^2_{L^2(\partial\Omega_t)}\leq C\left\{\|F^+ \|^2_{H^1(\Omega_T)} + \|a^+\|^2_{L^2(\Omega_T)}\right\}
\]
giving, together with \eqref{f7^}, the estimate
\[
\sqrt{\varepsilon}\|a^+_{|x_1=0}\|_{L^2(\partial\Omega_T)}\leq C\|F^+ \|_{H^1(\Omega_T)}
\]
and
\begin{equation}
\|\varepsilon F_7^+|_{x_1=0}\|_{L^2(\partial\Omega_T)}\leq \sqrt{\varepsilon}C\|a^+_{|x_1=0}\|_{L^2(\partial\Omega_T)}\leq C\|F^+ \|_{H^1(\Omega_T)}.
\label{F7tr}
\end{equation}

In fact, in the proof of estimate \eqref{main_est'} for problem \eqref{rb48b}--\eqref{rb51b} only \eqref{eqH} needs a special care whereas the rest arguments are the same as in Section \ref{sec:4} for problem \eqref{b48b}--\eqref{b51b}. Indeed, taking into account \eqref{b43b}, for sufficiently small $\varepsilon$ we still can resolve systems \eqref{rb48b} for the normal derivatives $\partial_1V^\pm$ without the appearance of the unwanted multiplier $1/\varepsilon$. The rest of the arguments towards the proof of estimates \eqref{d1U} and \eqref{1Vt} are the same as in the proof of Proposition \ref{p5}. In particular, the counterparts of equations \eqref{entropy} will contain the additional terms $-\varepsilon\partial_1S^\pm$ in the left-hand sides of the equations. But, such equations for $S^\pm$ do not still need any boundary conditions and we get \eqref{ent_est}.

In the left-hand side of the counterpart of the energy inequality \eqref{q0} for problem \eqref{rb48b}--\eqref{rb51b} we have the additional positive terms
\[
\varepsilon\sum_{\pm}\|\partial_0U^\pm _{|x_1=0}\|^2_{L^2(\partial\Omega_t)}
\]
which can be just thrown away to make the energy inequality rougher. The rest of the arguments towards the proof of estimate \eqref{estd0} are again the same as in Section \ref{sec:4}, and it is important that the constant $C$ in \eqref{estd0} for \eqref{rb48b}--\eqref{rb51b}  does not depend on $\varepsilon$.

Throwing away most of the positive terms containing the multiplier $\varepsilon$ in the counterpart of \eqref{q2} we obtain inequality \eqref{q2} with the quadratic form $Q_2$ replaced by $\widetilde{Q}_2$, where
\begin{equation}
2\widetilde{Q}_2 =2Q_2 +\varepsilon |\partial_2H_2^+|_{x_1=0}|^2.
\label{tildQ2}
\end{equation}
Let us now consider the counterpart of equation \eqref{eqH} for \eqref{rb48b}:
\begin{equation}
R = -\partial_0H_N^+ +\underline{\varepsilon\partial_1H_N^+ }\\ +{\rm coeff}\,v_N^+ +{\rm coeff}\,v_2^+ +{\rm coeff}\,H_N^+ +F_N^+\qquad \mbox{on}\ \partial\Omega_T,
\label{reqH}
\end{equation}
where we have the additional underlined term. In view of \eqref{b43b}, \eqref{reqH} implies
\begin{equation}
R = -\partial_0H_N^+ -\underline{\varepsilon\partial_2H_2^+ } +{\rm coeff}\,v_N^+ +{\rm coeff}\,v_2^+ +{\rm coeff}\,H_N^+ +F_N^+ +\underline{\varepsilon F_7^+}\qquad \mbox{on}\ \partial\Omega_T,
\label{reqH'}
\end{equation}
where we have the two additional underlined terms compared to \eqref{eqH}.

By virtue of \eqref{tildQ2} and \eqref{reqH'},
\[
2\widetilde{Q}_2 =2Q_2 + \varepsilon P+ \varepsilon\, {\rm coeff}\,F_7^+|_{x_1=0}\partial_2\varphi + \varepsilon\, {\rm coeff}\,F_7^+|_{x_1=0}\varphi ,
\]
where $Q_2$ is now given in \eqref{Q2"} (it is not the same as in \eqref{tildQ2}) and
\[
P= |\partial_2H_2^+|_{x_1=0}|^2+ {\rm coeff}\,\partial_2H_2^+|_{x_1=0}\partial_2\varphi +  {\rm coeff}\,\partial_2H_2^+|_{x_1=0}\varphi .
\]
By the Young inequality we estimate:
\[
P \geq \frac{1}{2} |\partial_2H_2^+|_{x_1=0}|^2 - C((\partial_2\varphi )^2 + \varphi ^2)\geq - C((\partial_2\varphi )^2 + \varphi ^2).
\]
Using then the last inequality and estimate \eqref{F7tr} we get \eqref{IntQ2} and \eqref{Mt} with $\widetilde{Q}_2$ instead of $Q_2$ in the left-hand side of \eqref{IntQ2}, where the constant $C$ in \eqref{Mt} does not depend on $\varepsilon$. The rest of the arguments are the same as in Section \ref{sec:4} and for problem \eqref{rb48b}--\eqref{rb51b} we obtain the uniform in $\varepsilon$  estimate \eqref{main_est'}.

Returning to the original regularized problem \eqref{r48^}--\eqref{r51^} with nonhomogeneous boundary conditions, we obtain for it the uniform in $\varepsilon$ a priori estimate \eqref{main_est}.

\begin{lemma}
Let the basic state \eqref{a21} satisfies assumptions \eqref{a5}--\eqref{avn} (in the 2D planar case) together with condition \eqref{RTL}. Then for any sufficiently small constant $\varepsilon >0$ and for all $f^{\pm} \in H^1(\Omega_T)$ and $g\in H^{3/2}(\partial\Omega_T)$ which vanish in the past the a priori estimate
\begin{equation}
\sum_{\pm}\|\dot{U}^{\pm \varepsilon}\|_{H^{1}(\Omega_T)}+\|\varphi^{\varepsilon}\|_{H^1(\partial\Omega_T)} \leq C\biggl\{\sum_{\pm}\|f^\pm \|_{H^{1}(\Omega_T)}+ \|g\|_{H^{3/2}(\partial\Omega_T)}\biggr\}
\label{rmain_est}
\end{equation}
holds for problem \eqref{r48^}--\eqref{r51^}, where $C=C(K,\bar{\rho}_0,\bar{p}_0,\bar{\kappa}_0,\epsilon,T)>0$ is a constant independent of $\varepsilon$ and the data $f^\pm$ and $g$.
\label{lem3}
\end{lemma}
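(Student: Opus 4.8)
The plan is to re-run the energy argument of Section~\ref{sec:4} that proved Lemma~\ref{lem1} for the unregularized problem \eqref{b48b}--\eqref{b51b}, now carrying along the terms produced by the artificial-viscosity operator $-\varepsilon\partial_1$ and checking that each of them is either sign-favorable or of a size controlled uniformly in $\varepsilon$. As in the reduction to homogeneous boundary conditions performed in Section~\ref{sec:4}, I first pass to the case $g=0$: since \eqref{r50^} coincides with \eqref{b50^}, the same lifting $\widetilde{U}^\pm\in H^2(\Omega_T)$ obeying \eqref{tildU} may be used, and the corresponding source term $F^\pm$ still satisfies \eqref{nonhom_F'} uniformly in $\varepsilon$ (the only extra contribution is $-\varepsilon\partial_1\widetilde{U}^\pm$, whose $H^1$ norm is at most $C\varepsilon\|g\|_{H^{3/2}(\partial\Omega_T)}$). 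This yields the regularized reduced problem \eqref{rb48b}--\eqref{rb51b}. Next I derive its divergence-constraint identity \eqref{raa1^}: the extra term $-\varepsilon\partial_1 a^\pm$ makes this a transport equation with vanishing viscous flux at $x_1=0$ (because $\hat{w}_1^\pm|_{x_1=0}=0$), so it needs no boundary condition; the energy method then gives \eqref{f7^}, \eqref{f7"} and, crucially, the $\sqrt{\varepsilon}$-weighted trace bound \eqref{F7tr} for $\varepsilon F_7^+|_{x_1=0}$. This last quantity has no counterpart in Section~\ref{sec:4} and is the genuinely \emph{new} ingredient.

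The three energy steps then go through with $\varepsilon$-independent constants. For the normal derivatives the boundary matrix becomes $\widetilde{A}_1(\widehat{U}^\pm,\widehat{\Psi}^\pm)-\varepsilon I_6$, an $O(\varepsilon)$ perturbation of the matrix used in Proposition~\ref{p5}; for $\varepsilon$ small its $\widehat{H}_N^\pm$-block stays invertible, so using \eqref{b43b} one still resolves \eqref{rb48b} for $\partial_1 V^\pm$ in $\Omega_t^\delta$ with no factor $1/\varepsilon$, the weighted estimate \eqref{sigma_d1U} is unchanged, the entropy equations only acquire a harmless $-\varepsilon\partial_1 S^\pm$, and therefore \eqref{d1U} and \eqref{1Vt} hold uniformly in $\varepsilon$. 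Applying $\partial_0$ to \eqref{rb48b} adds to the energy identity (the analog of \eqref{q0}) the nonnegative boundary term $\varepsilon\sum_\pm\|\partial_0 U^\pm|_{x_1=0}\|^2_{L^2(\partial\Omega_t)}$, which is simply discarded; the boundary quadratic form is still exactly $Q_0$, so \eqref{estd0} follows verbatim with a constant independent of $\varepsilon$. Differentiating \eqref{rb48b} in $x_2$ gives \eqref{q2} with $Q_2$ replaced by $\widetilde{Q}_2$ satisfying \eqref{tildQ2} (after dropping the remaining positive $\varepsilon$-terms), while the algebraic identity \eqref{eqH} for $R$ picks up an extra term $\varepsilon\partial_1 H_N^+$, becoming \eqref{reqH}; rewriting $\partial_1 H_N^+$ through the divergence constraint \eqref{b43b} turns this into \eqref{reqH'}, with the new terms $-\varepsilon\partial_2 H_2^+$ and $\varepsilon F_7^+$.

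The decisive point is that substituting \eqref{reqH'} into $\widetilde{Q}_2$ preserves the crucial structure: the term $\varepsilon|\partial_2 H_2^+|_{x_1=0}|^2$ dominates, by Young's inequality, the cross-terms of the form $\varepsilon\,{\rm coeff}\,\partial_2 H_2^+|_{x_1=0}\,\partial_2\varphi$ and $\varepsilon\,{\rm coeff}\,\partial_2 H_2^+|_{x_1=0}\,\varphi$, so that the remaining contribution is bounded below by $-C((\partial_2\varphi)^2+\varphi^2)$ with $C$ uniform in $\varepsilon$, while the new trace terms $\varepsilon F_7^+|_{x_1=0}$ paired with $\partial_2\varphi$ and $\varphi$ are absorbed using \eqref{F7tr}. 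Hence the underbraced Rayleigh--Taylor term $[\partial_1\hat{p}]\,\partial_t\partial_2\varphi\,\partial_2\varphi$ still produces, after integration in time and by \eqref{RTL}, the control $\epsilon\,\|\partial_2\varphi(t)\|^2_{L^2(\mathbb{R})}$, the underlined term $K(t)$ is treated via \eqref{b43b} and \eqref{f7"} exactly as in \eqref{Lt}, and \eqref{estd2} holds with an $\varepsilon$-independent constant. Combining \eqref{estd0} with \eqref{estd2}, choosing the interpolation parameter $\tilde{\varepsilon}$ small, applying Gronwall's lemma, and adjoining the estimates for $\partial_1 U^\pm$ (from \eqref{d1U}) and for $\partial_t\varphi$ as at the end of Section~\ref{sec:4} gives \eqref{main_est'} for \eqref{rb48b}--\eqref{rb51b} with a constant not depending on $\varepsilon$, and undoing the reduction to nonhomogeneous boundary data yields \eqref{rmain_est}. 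I expect the main obstacle to be precisely this last step: ensuring that the viscous correction to the equation for $R$ --- which, through the divergence constraint, injects $\varepsilon\partial_2 H_2^+$ and $\varepsilon F_7^+$ into the boundary quadratic form coming from $x_2$-differentiation --- is fully absorbed by the extra $\varepsilon|\partial_2 H_2^+|_{x_1=0}|^2$ term together with the weighted trace estimate \eqref{F7tr}, so that no power of $\varepsilon$ survives in the final constant.
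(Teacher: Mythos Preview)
Your proposal is correct and follows essentially the same route as the paper: reduce to homogeneous boundary data, derive the divergence-constraint equation \eqref{raa1^} and its $\sqrt{\varepsilon}$-weighted trace bound \eqref{F7tr}, observe that the normal-derivative and $\partial_0$-energy steps go through unchanged after discarding sign-favorable $\varepsilon$-terms, and in the $\partial_2$-step retain the single term $\varepsilon|\partial_2 H_2^+|_{x_1=0}|^2$ to absorb, via Young's inequality, the new cross-terms coming from \eqref{reqH'}, with the $\varepsilon F_7^+$ contributions handled by \eqref{F7tr}. The only minor imprecision is your phrase ``vanishing viscous flux'' for \eqref{raa1^}: the point is rather that the boundary flux has the favorable sign (outgoing characteristic), which is exactly what yields the extra trace control in \eqref{F7tr}.
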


We are now in a position to pass to the limit as $\varepsilon \rightarrow 0$. In view of Lemma \ref{lem2}, for all sufficiently small $\varepsilon$  problem \eqref{r48^}--\eqref{r51^} admits a unique solution with the regularity described in Theorem \ref{t1}. Due to the uniform a priori estimate \eqref{rmain_est} we can extract a subsequence weakly convergent to $((\dot{U}^+, \dot{U}^-) ,\varphi )\in H^1(\Omega_T)\times H^1(\partial\Omega_T)$ with $(\dot{U}^+, \dot{U}^-)|_{x_1=0}\in H^{1/2}(\partial\Omega_T)$. Passing to the limit in \eqref{r48^}--\eqref{r51^} as $\varepsilon\to 0$ immediately gives that
$(\dot{U}^+, \dot{U}^- ,\varphi )$ is a solution to problem \eqref{b48^}--\eqref{b51^}. The a priori estimate \eqref{main_est} implies its uniqueness. The proof of Theorem \ref{t1} is complete.

%
% For one-column wide figures use

%
% For tables use

%
% BibTeX users please use

%
% Non-BibTeX users please use

\end{document}